\documentclass[11pt, twoside, leqno]{amsart}  
\usepackage{lipsum}
\usepackage{amsfonts}
\usepackage{graphicx}
\usepackage{epstopdf}
\usepackage{algorithmic}
\usepackage{calligra}
\usepackage{amsfonts,amsmath,amsthm,amssymb}
\usepackage{mathtools}
\usepackage{hyperref}
\usepackage{autonum}
\usepackage{hhline}
\usepackage{array}
\usepackage{diagbox}
\usepackage{tcolorbox}
\usepackage{mdframed}
\usepackage{multicol}
\usepackage{graphicx}
\usepackage{subcaption}
\usepackage{moreverb}
\usepackage{bbm}
\usepackage[margin=1.32in]{geometry}
\usepackage{todonotes}
\usepackage{scalerel,amssymb}
\allowdisplaybreaks
\usepackage{mathrsfs}  
\usepackage{lineno}
\usepackage{todonotes}
\usepackage{tikz}
\usepackage{pgfplots}
\usetikzlibrary{arrows.meta, positioning, calc, fit, backgrounds}
\usepackage{siunitx}
\usepackage[numbers,sort&compress]{natbib}
\definecolor{mygreen}{HTML}{43a047}
\usepackage{subcaption}
\usepackage{doi}
\usepackage{alphalph}
\usepackage{booktabs}
\usepackage{makecell}
\definecolor{darkgreen}{rgb}{0,0.5,0}

\definecolor{lightblue}{HTML}{4682B4}

\usepackage{enumitem}
\usepackage{dutchcal}
\usepackage[normalem]{ulem}
\usepackage[toc,page]{appendix}
\usepackage{multirow}
\theoremstyle{plain}
\newtheorem{theorem}{Theorem}[section]
\newtheorem{lemma}[theorem]{Lemma}
\newtheorem{proposition}[theorem]{Proposition}
\newtheorem{corollary}[theorem]{Corollary}

\theoremstyle{definition}

\theoremstyle{remark}
\newtheorem{remark}[theorem]{Remark}


\newcommand{\eps}{\varepsilon}

\newcommand{\op}{\overline{p}}

\newcommand{\Om}{\Omega}

\newcommand{\Xp}{\mathcal{X}_p}

\newcommand{\real}{\mathfrak{R}}

\newcommand{\vt}{v_t}
\newcommand{\vtt}{v_{tt}}

\newcommand{\pt}{p_t}

\newcommand{\ptt}{p_{tt}}

\newcommand{\ddt}{\frac{\textup{d}}{\textup{d}t}}

\newcommand{\ca}{\zeta}
\newcommand{\cb}{\xi}
\newcommand{\mh}{h_{\textup{FEM}}}
\newcommand{\pN}{p^N}
\newcommand{\ptN}{p_t^N}
\newcommand{\pttN}{p_{tt}^N}
\newcommand{\pNone}{p^{N-1}}

\newcommand{\vN}{v^N}
\newcommand{\vtN}{v_t^N}
\newcommand{\vNone}{v^{N-1}}
\newcommand{\vtNone}{v_t^{N-1}}
\newcommand{\vttN}{v_{tt}^N}
\newcommand{\vttNone}{v_{tt}^{N-1}}

\newcommand{\phiN}{\phi^N}

\newcommand{\XN}{X_N}

\newcommand{\projN}{\textup{Proj}_{\XN}}

\newcommand{\projNt}{\textup{Proj}_{\tilde{X}_N}}

\newcommand{\errpN}{e^{p,N}}
\newcommand{\errptN}{e_t^{p,N}}
\newcommand{\errpttN}{e_{tt}^{p,N}}

\newcommand{\errvN}{e^{v,N}}
\newcommand{\errvtN}{e_t^{v,N}}
\newcommand{\errvttN}{e_{tt}^{v,N}}

\newcommand{\dt}{\, \textup{d} t}

\newcommand{\dx}{\, \textup{d} x}
\newcommand{\dG}{\, \textup{d} \Gamma}
\newcommand{\dxs}{\, \textup{d}x\textup{d}s}

\newcommand{\intTO}{\int_0^T \int_{\Omega}}

\newcommand{\intT}{\int_0^T}
\newcommand{\intt}{\int_0^t}
\newcommand{\intO}{\int_{\Omega}}


\newcommand{\R}{\mathbb{R}} 
\newcommand{\C}{\mathbb{C}} 
\newcommand{\N}{\mathbb{N}} 
\newcommand{\Linf}{L^\infty(\Omega)}
\newcommand{\Ltwo}{L^2(\Omega)}
\newcommand{\Hone}{H^1(\Omega)}
\newcommand{\Htwo}{H^2(\Omega)}

\newcommand{\LtwoLtwo}{L^2(L^2(\Omega))}

\newcommand{\Hthreehalf}{H^{3/2}(\Omega)}
\newcommand{\LtwoHtwo}{L^2(\Htwo)}
\newcommand{\LtwoHthreehalf}{L^2(\Hthreehalf)}

\newcommand{\Tnorm}[1]{\|#1\|_{\Xp }}

\newcommand{\deltar}{\delta_v}



\makeatletter
\newcommand{\leqnomode}{\tagsleft@true}
\newcommand{\reqnomode}{\tagsleft@false}
\makeatother

\newcommand{\vecn}{\boldsymbol{n}}
\newcommand{\trace}[2]{#2}

\newcommand{\LtwoTLtwo}{L^2(0,T;\Ltwo)}

\newcommand{\vone}{v^{(1)}}
\newcommand{\vtwo}{v^{(2)}}

\newcommand{\Xv}{\mathcal{X}_v}

\newcommand{\dtau}{\, \textup{d}\tau}
\newcommand{\change}[1]{\textcolor{black}{#1}}

\newcommand{\deltap}{\delta_p}

\newcommand{\calL}{\mathcal{L}}
\definecolor{grey}{rgb}{0.5,0.5,0.5}

\newcommand{\teresa}[1]{\textcolor{black}{#1}}

\usetikzlibrary{arrows.meta, quotes}
\makeatletter
\tikzset{
	node on line/.style={
		to path={
			\pgfextra{%
				\edef\tikz@temp{
					edge[
					line to, path only, 
					every edge quotes/.append style={auto=false},
					nodes={alias=@nodeonline@}]
					coordinate(@nodeonline@)
					\unexpanded\expandafter{\tikz@tonodes}(\tikztotarget)
				}\expandafter
			}\tikz@temp
			-- (@nodeonline@) -- (\tikztotarget)}}}
\makeatother
\tikzset{global scale/.style={
		scale=#1,
		every node/.style={scale=#1}
	}
}
\newcommand{\rv}{r_v}
\newcommand{\rp}{r_p}

\newcommand{\ballr}{\mathbb{B}_{\rp, \rv}}

\newcommand{\ov}{\overline{v}}

\newcommand{\bfn}{{\bf n}}

\newcommand{\pfixedzero}{p^{(0)}}
\newcommand{\pfixedone}{p^{(1)}}
\newcommand{\pfixedN}{p^{(N)}}
\newcommand{\pfixedNnegone}{p^{(N-1)}}
\newcommand{\pfixedNone}{p^{(N_1)}}
\newcommand{\pfixedNtwo}{p^{(N_2)}}
\newcommand{\pfixedNonenegone}{p^{(N_1-1)}}
\newcommand{\pfixedNtwonegone}{p^{(N_2-1)}}

\newcommand{\vfixedzero}{v^{(0)}}
\newcommand{\vfixedone}{v^{(1)}}
\newcommand{\vfixedN}{v^{(N)}}
\newcommand{\vfixedNnegone}{v^{(N-1)}}
\newcommand{\vfixedNone}{v^{(N_1)}}
\newcommand{\vfixedNtwo}{v^{(N_2)}}
\newcommand{\vfixedNonenegone}{v^{(N_1-1)}}
\newcommand{\vfixedNtwonegone}{v^{(N_2-1)}}

\newcommand{\puniqone}{p^{<1>}}
\newcommand{\vuniqone}{v^{<1>}}

\newcommand{\puniqtwo}{p^{<2>}}
\newcommand{\vuniqtwo}{v^{<2>}}

\newcommand{\dGs}{\,\textup{d}\Gamma \textup{d}s}

\newcommand{\errprojNp}{\textup{err}(\tilde{p}^N)}

\newcommand{\errprojNv}{\textup{err}(\tilde{v}^N)}

\newcommand{\Xplow}{Y_p}
\newcommand{\Xvlow}{Y_{v}}
\newcommand{\Xplowzero}{Y^0_p}
\newcommand{\Xvlowzero}{Y^0_{v}}
\newcommand{\Xplowell}{Y^\ell_p}
\newcommand{\Xvlowell}{Y^\ell_{v}}

\newcommand{\pvnorm}[1]{|\!|\!|#1|\!|\!|_{\Xplowzero \times \Xvlowzero}}

\newcommand{\pvnormcap}[1]{|\!|\!|#1|\!|\!|_{\left(\Xplowzero \cap H^{2}(\Hone)\right)\times \Xvlowzero}}

\newcommand{\tpN}{\tilde{p}^N}
\newcommand{\tvN}{\tilde{v}^N}

\newcommand{\tpzero}{\tilde{p}^0}

\newcommand{\fN}{f^N}

\def\fraka{\mathfrak{a}}
\def\frakb{\mathfrak{b}}

\def\pref{p^{\textup{ref}}}
\def\vref{v^{\textup{ref}}}

\def\hFEM{h_{\textup{FEM}}}   
\makeatletter
\@namedef{subjclassname@2020}{\textup{2020} Mathematics Subject Classification}
\makeatother
\title[Multiharmonic algorithms for contrast-enhanced ultrasound]{Multiharmonic algorithms  \\[1mm] for contrast-enhanced ultrasound}
\subjclass[2020]{35L05, 35L72, 34A34, 35J05} 

\keywords{nonlinear acoustics, contrast-enhanced ultrasound, microbubbles, multiharmonic expansions, Westervelt's equation, Helmholtz equation, iterative algorithms}

\author{Vanja Nikoli\'c$^\dagger$}  
\thanks{$^\dagger$Department of Mathematics,
	Radboud University,      
	Heyendaalseweg 135,    
	6525 AJ Nijmegen, The Netherlands (\href{vanja.nikolic@ru.nl}{vanja.nikolic@ru.nl})}   
\author{Teresa Rauscher$^\ddag$}
\thanks{$^\ddag$Department of Mathematics and Scientific Computing, 
	University of Graz, 
	Heinrichstra\ss e 36, A-8010 Graz, Austria (\href{teresa.rauscher@uni-graz.at}{teresa.rauscher@uni-graz.at})}
\begin{document}
\vspace*{8mm}
\begin{abstract} 
	Harmonic generation plays a crucial role in contrast-enhanced ultrasound, both for imaging and therapeutic applications.  However, accurately capturing these nonlinear effects is computationally demanding when using traditional time-domain approaches. To address this issue, we develop algorithms based on a time discretization that uses a multiharmonic Ansatz applied to a model that couples the Westervelt equation for acoustic pressure with a {volume-based} approximation of the Rayleigh--Plesset equation for the dynamics of microbubble contrast agents. We first rigorously establish the existence of time-periodic solutions for this Westervelt-ODE system. We then derive a multiharmonic representation of the system under time-periodic excitation and develop iterative algorithms that rely on the successive computation of higher harmonics assuming either real-valued or complex-valued solution fields. In the real-valued setting, we characterize the approximation error in terms of the number of harmonics and a contribution arising from the fixed-point iteration. Finally, we investigate these algorithms numerically and illustrate  how the number of harmonics and the presence of microbubbles  influence the propagation of acoustic waves.
\end{abstract}
\vspace*{-7mm}
\maketitle           
\section{Introduction}
Contrast-enhanced ultrasound has become an important  tool in biomedical applications, with gas-filled microbubbles being used to improve both diagnostic and therapeutic procedures. \teresa{While ultrasound propagation can be described by linear acoustic models in the small-amplitude regime, nonlinear effects arise at higher acoustic pressures, leading to the generation of higher harmonics in the frequency domain.} These nonlinearities are further amplified by microbubble contrast agents, which exhibit strongly nonlinear oscillatory behavior when exposed to ultrasound waves. This delicate  back-and-forth interaction is beneficial for improving resolution in imaging but also for enhancing therapeutic treatments, such as targeted drug delivery; see, for example,~\cite{hoff2001acoustic, stride2009physical, ferrara2007ultrasound} for details. With the rise in the number of applications of contrast-enhanced ultrasound, accurate modeling and efficient simulation in this context have also become prominent research topics; see, e.g.,~\cite{versluis2020ultrasound, blanken2024proteus, doinikov2011review, matalliotakis2023computation}, and the references given therein. \\
\indent The present work builds upon \cite{nikolic2024mathematicalmodelsnonlinearultrasound}, where time-domain mathematical models for ultrasound contrast imaging with microbubbles based on a nonlinear acoustic wave equation coupled to a Rayleigh--Plesset-type ODE \teresa{in a continuum (effective-medium) description of the bubbly mixture} have been derived and investigated in terms of local well-posedness and numerical simulations.  As noted in~\cite{nikolic2024mathematicalmodelsnonlinearultrasound}, a major computational challenge when simulating such systems stems from different time scales on which the wave equation and ODE (nonlinearly) evolve. As a result, straightforward numerical approaches for solving such systems demand using prohibitively small time steps. In this work, we approach this issue by developing \emph{multiharmonic} algorithms for time-periodic solution fields, which offer a potentially more efficient modeling alternative by making use of harmonic expansions; see, e.g.,~\cite{bachinger2005numerical, bachinger2006efficient}, where multiharmonic ideas have been explored for problems arising in electromagnetism and, e.g.,~\cite{Rainer2024nonlinear, kaltenbacher2021periodic, kaltenbacher2024well, groth2021accelerating}, where they have been developed for single-physics acoustic models. Instead of resolving every oscillation in the space-time domain, these methods decompose the field into a sum of harmonics, which can be obtained as solutions of suitable  (in our case) Helmholtz problems and algebraic equations.
These methods are especially promising for simulating real-time ultrasound applications in which the number of harmonics used can be relatively low.

As the starting  time-domain model of contrast-enhanced ultrasound we employ the following coupled nonlinear wave-ODE system:
\begin{equation} \label{West_RPE}
	\boxed{
		\begin{aligned}
			&\ \ p_{tt} - c^2 \Delta p - b \Delta p_t = \eta (p^2)_{tt} + c^2 \rho_0 n_0(x) \vtt 
			+ h(x,t) \quad &&\text{in } \Omega \times (0,T), \\[1mm]
			&\ \ v_{tt} + \delta \omega_0 v_t + \omega_0^2 v  = \ca v^2 + \cb(2v \vtt + \vt^2)- \mu p \quad &&\text{in } \Omega \times (0,T), \\
		\end{aligned} 
	}
\end{equation}
consisting of the damped Westervelt equation \cite{westervelt1963parametric} for the acoustic pressure $p=p(x,t)$ (that is, fluctuations in the background pressure) and an ODE pointwise a.e.~in space for the volume variation  $v=v(x,t)$ of microbubbles. The system in \eqref{West_RPE}  models the interaction of acoustic pressure waves with oscillations of \teresa{microbubbles}.  
\subsection{Modeling background}
\indent In the Westervelt equation, $c>0$ denotes the \teresa{(constant)} speed of sound in the medium, $b>0$ the diffusivity of sound, so that the term $-b \Delta \pt$ introduces strong damping, and $\rho_0>0$ the mass density of the mixture at equilibrium. The nonlinearity coefficient is given by $\eta = \frac{\beta_a}{\rho_0 c^2}$, where $\beta_a$ is the nonlinearity parameter in the medium.  The source of the pressure waves is provided in part through a contribution due to bubble oscillations, modeled by the term $c^2 \rho_0 n_0 v_{tt}$, where $n_0=n_0(x)$ is the bubble number density at equilibrium, and by the source function $h=h(x,t)$.  We allow $n_0$ to vary in space (that is, we assume that $n_0 \in \Linf$), as  this setting is relevant for imaging applications; see, e.g.,~\cite{kaltenbacher_rundell2021}. Moreover, $n_0$ and $h$ regulate the strength of the acoustic source and will be assumed to be small enough in the well-posedness analysis; see Theorem~\ref{thm: existence West-volume} and the discussion in Section~\ref{sec: smallness} on the physical meaning of the smallness assumption. \\
\indent  The ODE in \eqref{West_RPE} captures nonlinear harmonic oscillations driven by the acoustic pressure via the term $-
\mu p$. The total microbubble volume is  given by $V= v_0 + v$, where $v_0$ is the equilibrium volume. The coefficient $\delta = \frac{4 \nu}{\omega_0 R_0^2}$ is the viscous damping coefficient and $\omega_0 = \sqrt{\frac{3 \kappa P_0}{\rho_0 R_0^2}}$ the natural frequency, where $R_0$ is the bubble radius at equilibrium volume $v_0$ (that is, $v_0 = \frac{4 \pi}{3} R_0^3$), $\kappa$ the adiabatic exponent and $P_0$ the ambient pressure in the mixture (so that the total pressure is $P_0 +p$).  
The coefficients appearing on the right-hand side of the ODE are given in terms of equilibrium values by $\mu = \frac{4 \pi R_0}{\rho_0}$, $\ca= \frac{(\kappa + 1) \omega_0^2}{2 v_0}$, and $\cb = \frac{1}{6 v_0}$.  As discussed in~\cite[Ch.\ 5]{hamilton1998nonlinear}, the ODE in \eqref{West_RPE} can be seen as a \teresa{volume-based} approximation of the following Rayleigh--Plesset equation: 
\begin{equation} \label{RP eq}
	\rho_0 \left[ R R_{tt} + \tfrac{3}{2} R_t^2 \right] = p_b - 4 \nu \frac{R_t}{R} - p,
\end{equation}
where $\nu$ is the kinematic viscosity and $p_b$ a constant pressure contribution, which can be derived using the relation $ V= \frac{4 \pi}{3} R^3$ and the adiabatic gas law $\frac{p_b}{P_0} = \left( \frac{v_0}{V}\right)^{\kappa}$. \teresa{Introducing the volume variable and expanding the resulting expression about the equilibrium radius $R_0$, while neglecting higher-order terms, yields a simplified ODE for the volume perturbation $v$. A detailed derivation of this approximation is provided in \cite[Sec.~2.3]{rauscher2025imaging}.} Reformulating the dynamics in terms of $v$ instead of $R$ removes singular terms and results in a more manageable nonlinear structure. Since pressure perturbations couple more directly to volume oscillations, the volume-based formulation is also better suited for harmonic expansions. In contrast, a harmonic expansion in terms of $R$ is expected to lead to higher-order interactions that complicate the isolation of harmonic components due to the cubic dependence of volume on radius. \\  
\indent We equip \eqref{West_RPE} with absorbing-type boundary conditions of the following form: 
\begin{equation}
	\beta p_t + \gamma p + \nabla p \cdot \vecn  = 0 \quad \text{ on } \partial \Omega \times (0, T), \quad  \beta,\, \gamma>0,
\end{equation}
and we are interested in time-periodic solutions that satisfy
\begin{equation}
	\begin{aligned}
		p(0)=p(T), \qquad p_t(0)=p_t(T) \qquad \text{ in } \Omega, \\
		v(0)=v(T), \qquad v_t(0)=v_t(T) \qquad \text{ in } \Omega,
	\end{aligned}
\end{equation}
\teresa{which corresponds to steady-state oscillatory (stable cavitation) regimes rather than transient cavitation.} Although in the existence analysis we do not require the source $h$ to be time periodic, for developing multiharmonic algorithms, we assume that $h$ represents the second time derivative of a $T$-periodic function $g$; see Section~\ref{sec: multiharmonic} for details.
\subsection{Main contributions} The overall purpose of the present work is to establish multiharmonic approximation approaches for time-domain systems in the form of \eqref{West_RPE}, that is, systems that incorporate both nonlinear acoustic propagation and nonlinear microbubble oscillations. To approximate the problem we employ a cut-off Fourier series for the pressure and volume fields:
\begin{equation} \label{cut off approx}
	\begin{aligned}
		p \approx \pN= \real \left\lbrace \sum_{m=0}^{N}  \exp(\imath m \omega t) p_m^N(x)\right\rbrace,  \quad v \approx \vN = \real \left\lbrace \sum_{m=0}^{N}  \exp(\imath m \omega t) v_m^N(x)\right\rbrace,
	\end{aligned}
\end{equation}
with $\omega= \dfrac{2 \pi}{T}$. In addition to rigorously considering the setting of real-valued pressure-volume fields, we also discuss multiharmonic algorithms derived from complex-valued fields (that is, from dropping the $\real$ operator in \eqref{cut off approx}). This approach is sometimes adopted in electromagnetism (see, e.g.,~\cite{de2001strong, yamada1988harmonic}) and it has been discussed in~\cite{kaltenbacher2021periodic} for the de-coupled Westervelt equation. Here, it results in simplified algorithms, which are, however, only formally investigated; see Section~\ref{sec: multiharmonic complex} for details. Toward reaching the overall goal of the work, our main contributions pertain to
\begin{itemize}[leftmargin=7mm]
	\setlength\itemsep{1mm}
	\item rigorously establishing the existence of time-periodic solutions for the Westervelt-ODE system in \eqref{West_RPE};
	\item deriving cutoff multiharmonic approximations of the system under time-periodic excitation for computing $(\pN_m, \vN_m)$;
	\item developing and analyzing \emph{linearized} multiharmonic cut-off algorithms for computing $(\pN_m, \vN_m) $.
\end{itemize}
In particular, we characterize the error of linearized multiharmonic algorithms for real-valued fields in terms of the number of harmonics and a contribution due to the fixed-point iteration; see~Theorem~\ref{thm: convergence}. For convenience, an overview of algorithms investigated in this work is provided in Figure \ref{fig: overview_alg}. \vspace*{2mm}

\definecolor{whispermint}{RGB}{240,250,245}
\begin{figure}[h!] 
	\centering
	\begin{tikzpicture}[scale = 0.95,
		algo/.style={transform shape, draw, rectangle, minimum height=1.2cm, minimum width=2.5cm, align=center},
		labelstyle/.style={font=\bfseries},
		node distance=1.2cm and 1.5cm
		]
		
		\node[algo, fill=white!90!whispermint!10] (cutoff) {Multiharmonic\\cut-off algorithms};
		\node[algo, below= 1cm of cutoff, fill = white!90!whispermint!10] (fixedpoint) {Linearized multiharmonic \\ cut-off algorithms};
		
		\node[algo, right=1.5cm of cutoff, fill = white!90!whispermint!10] (twoharmonic) {Two-harmonic\\algorithm};
		
		\coordinate (center) at ($(cutoff)!0.5!(twoharmonic)$);
		
		\begin{scope}[on background layer]
			\path let \p1 = (cutoff.north west), \p2 = (fixedpoint.south west) in
			node[fill=blue!5, draw=blue, dashed, thick, rounded corners, minimum width=6.2cm, minimum height=5.3cm, anchor=north east]
			at ($(cutoff.north)!0.5!(fixedpoint.south) + (-0.2, 3.4)$) {};
			
			\path let \p1 = (cutoff.north east), \p2 = (twoharmonic.south east) in
			node[fill=green!10, draw=green!50!black, dashed, thick, rounded corners, minimum width=6.2cm, minimum height=5.3cm, anchor=north west]
			at ($(cutoff.north)!0.5!(fixedpoint.south) + (0.2, 3.4)$) {};
		\end{scope}
		
		\node[labelstyle, anchor=north] at ($(cutoff.north) + (-5.25, 1.6)$) {Real-valued};
		\node[labelstyle, anchor=north] at ($(cutoff.north) + (-4.3, 1.15)$) {pressure-volume field};
		\node[labelstyle, anchor=north] at ($(cutoff.north) + (5, 1.6)$) {Complex-valued};
		\node[labelstyle, anchor=north] at ($(cutoff.north) + (4.45, 1.15)$) {pressure-volume field};
		\node[labelstyle, font=\normalsize, anchor=north] at ($(cutoff.north) + (5.05, -3.3)$) {(simplified setting)};
		\draw[-{Stealth}, line width=1pt] (cutoff.east) -- (twoharmonic.west);
		\draw[-{Stealth}, line width=1pt] (cutoff.south) -- (fixedpoint.north);
	\end{tikzpicture}
	\caption{Overview of multiharmonic algorithms in this work.}
	\label{fig: overview_alg}
\end{figure}
\subsection{Novelty and related work} 
To the best of our knowledge, this is the first work to rigorously develop multiharmonic approaches for coupled nonlinear wave-ODE systems of this type, providing both well-posedness theory and numerical analysis. An approach to second harmonic generation for a linearization of \eqref{West_RPE} with $\eta =0$ and $b=0$ in the wave equation has been  formally set up in \cite[Ch.\ 5]{hamilton1998nonlinear} without convergence guarantees.  \\
\indent Rigorous multiharmonic studies have been performed on single-physics equations. Existence of periodic solutions of nonlinear acoustic models, including the Westervelt equation, has been investigated rigorously in~\cite{kaltenbacher2021periodic, Rainer2024nonlinear, kaltenbacher2024well, kaltenbacher2025acoustic, celik2018nonlinear, celik2019nonlinear}. An iterative multiharmonic algorithm for the Westervelt equation has been proposed and rigorously studied in~\cite{Rainer2024nonlinear}. A numerical algorithm based on using the complex Fourier Ansatz for the de-coupled Westervelt equation and  a boundary element approach for the resulting Helmholtz problems has been developed and investigated in \cite{groth2021accelerating}. In the context of electromagnetism, a multiharmonic treatment of the quasi-stationary Maxwell problem has been developed and rigorously analyzed~in~\cite{bachinger2005numerical}; numerical simulation aspects are discussed in~\cite{bachinger2006efficient}.  These results, however, do not address the coupling to microbubble/ODE dynamics. \\
\indent Periodic solutions of Rayleigh--Plesset-type equations that do not incorporate modeling of the acoustic propagation have been rigorously investigated in the literature; we refer to, for example,~\cite{yu2024bifurcation, hakl2013periodic} and~\cite[Ch.\ 9]{torres2015mathematical}, and the references provided therein.  We mention in passing that the mathematical literature on non-periodic models in nonlinear acoustics of non-bubbly media is quite rich; see, e.g.,~\cite{kaltenbacher2009global, kaltenbacher2015mathematics, acosta2022nonlinear, eptaminitakis2024weakly} and the references provided therein.
\subsection{Organization of the paper} The remainder of the exposition is organized as follows. In Section~\ref{sec: existence}, we determine sufficient conditions for the well-posedness of the time-periodic boundary value problem for the Westervelt-ODE system \eqref{West_RPE}. In Section \ref{sec: multiharmonic}, we derive a multiharmonic cut-off representation of the system and propose a linearization which results in a simplified setting for computing Fourier coefficients in \eqref{cut off approx}. In Section~\ref{sec: convergence}, we characterize the error and prove the convergence of the proposed linearized multiharmonic scheme to the solution of the Westervelt-ODE system as the number of harmonics $N$ tends to $\infty$. The main theoretical result of the section is contained in Theorem~\ref{thm: convergence}. In Section \ref{sec: multiharmonic complex}, we discuss multiharmonic algorithms resulting from complex solutions fields. Finally, in Section \ref{sec:numerics}, we investigate and compare the introduced algorithms.

\section{Existence of time-periodic solutions}  \label{sec: existence}
In this section, we establish the basis for the numerical analysis by investigating the existence and uniqueness of periodic solutions for the coupled PDE-ODE system. More precisely, given $T>0$ and a bounded domain $\Omega \subset \R^d$, where $d \in \{2,3\}$, we study the  problem:
\begin{equation}	\label{ibvp:westervelt volume periodic}
	\left\{	\begin{aligned}
	\ &\ptt - c^2 \Delta p - b \Delta \pt = \eta (p^2)_{tt}+c^2 \rho_0 n_0(x) \vtt + h \quad && \text{in } \Omega \times  (0,T),\\
&\beta \pt + \gamma p + \nabla p \cdot \vecn = 0 && \text{on }  \partial\Omega \times (0,T), \\
&p(0) = p(T), \ \pt(0) = \pt(T) && \text{in } \Omega, \\ 
& \vtt + \delta \omega_0 \vt + \omega_0^2 v = \ca v^2 + \cb(2v \vtt + \vt^2)- \mu p \quad && \text{in }   \Omega \times (0,T),\\
&v(0) = v(T), \ \vt(0) = \vt(T) && \text{in } \Omega.
\end{aligned} \right.
\end{equation}
The analysis will be based on successive approximations of the system, for which knowledge of linear time-periodic ODE and wave problems will be very helpful. We thus discuss those results next. \\[1mm]
\noindent{\bf Notation}. Below we use $\textup{lhs} \lesssim \textup{rhs}$ to denote $\textup{lhs} \leq C \cdot \textup{rhs}$, where $C>0$ is a generic constant. When writing norms in Bochner spaces, we omit the temporal domain $(0,T)$. For example, $\|\cdot\|_{L^p(L^q(\Omega))}$ denotes the norm in $L^p(0,T; L^q(\Om))$.
\subsection{Auxiliary existence results for linear time-periodic problems} 
To set up the local well-posedness analysis of \eqref{ibvp:westervelt volume periodic}, we first state two separate results on the well-posedness of an ODE (describing damped oscillations with forcing) and a linear time-periodic wave problem.
\begin{lemma} \label{lemma: ode}
	Let  $T>0$ and $f \in L^2(0,T; \Linf)$. Furthermore, let $\delta$, $\omega_0>0$. Then, the periodic ODE problem
	\begin{equation} \label{ip v lin}
		\left\{	\begin{aligned}
			& \vtt + \delta \omega_0 \vt + \omega_0^2 v = f  && \text{a.e.\ in }   \Omega \times (0,T),\\
			&v(0) = v(T), \ \vt(0) = \vt(T) && \text{a.e.\ in } \Omega,
		\end{aligned} \right.
	\end{equation}
	has a unique solution 
		\begin{equation} \label{def Xv}
			\begin{aligned}
			v \in	\Xv=\{v \in H^2(0,T; \Linf): \, v(0)=v(T), \, v_t(0)= v_t(T) \ \text{a.e.}\}
			\end{aligned}
		\end{equation}
	 that satisfies
	\begin{equation}
		\begin{aligned}
		\|v\|_{\Xv} \coloneqq	\|v\|_{L^\infty(\Linf)}+	\|\vt\|_{L^\infty(\Linf)}+\|\vtt\|_{L^2(\Linf)} \lesssim \|f\|_{L^2(\Linf)}.
		\end{aligned}
	\end{equation}	
\end{lemma}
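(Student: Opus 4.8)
The plan is to solve \eqref{ip v lin} pointwise in the spatial variable $x \in \Omega$, treating $x$ as a parameter, and then to assemble the pointwise-in-$x$ solutions into an element of $\Xv$ with the claimed bound. For fixed $x$, the problem is a linear constant-coefficient second-order ODE in $t$ with forcing $f(x,\cdot) \in L^2(0,T)$ and periodic boundary data. Its characteristic polynomial $\lambda^2 + \delta \omega_0 \lambda + \omega_0^2$ has roots $\lambda_\pm$ with $\real \lambda_\pm = -\delta \omega_0/2 < 0$, using $\delta, \omega_0 > 0$. The key structural fact is therefore that the only $T$-periodic solution of the homogeneous equation is the trivial one: the fundamental matrix $\Phi(t)$ of the associated first-order system satisfies $\rho(\Phi(T)) < 1$, so $I - \Phi(T)$ is invertible. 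This yields uniqueness immediately, and it also yields existence of a unique periodic solution for each forcing, via the variation-of-constants formula together with the adjustment of initial data enforced by $(I - \Phi(T))^{-1}$.

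Next I would encode this solution operator through a periodic Green's function: there is a kernel $G(t,s)$, depending only on $\delta$, $\omega_0$, and $T$, such that the unique periodic solution is $v(x,t) = \int_0^T G(t,s)\, f(x,s) \ds$. Because $I - \Phi(T)$ is invertible and the fundamental solutions are bounded on $[0,T]$, both $G$ and its first $t$-derivative $G_t$ (which carries the unit jump across $t = s$) are bounded on $[0,T]^2$, with bounds depending only on $\delta, \omega_0, T$. I would then verify measurability of $x \mapsto v(x,\cdot)$ from that of $f$ and the linearity of the kernel, so that $v$ is a genuine $\Linf$-valued function.

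The estimates follow from this representation. For a.e.\ fixed $t$, taking the essential supremum in $x$ and using monotonicity of the integral gives $\|v(\cdot,t)\|_{\Linf} \leq \int_0^T |G(t,s)|\, \|f(\cdot,s)\|_{\Linf} \ds$, and Cauchy--Schwarz in $s$ bounds this by $\sup_t \|G(t,\cdot)\|_{L^2(0,T)}\, \|f\|_{L^2(\Linf)}$; taking $\sup_t$ yields $\|v\|_{L^\infty(\Linf)} \lesssim \|f\|_{L^2(\Linf)}$. The identical argument with $G_t$ in place of $G$ gives $\|\vt\|_{L^\infty(\Linf)} \lesssim \|f\|_{L^2(\Linf)}$. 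Finally, reading the second derivative off the equation as $\vtt = f - \delta \omega_0 \vt - \omega_0^2 v$ and using $\|\cdot\|_{L^2(\Linf)} \leq \sqrt{T}\, \|\cdot\|_{L^\infty(\Linf)}$ on the lower-order terms, I obtain $\|\vtt\|_{L^2(\Linf)} \lesssim \|f\|_{L^2(\Linf)}$. Summing the three contributions gives the claimed estimate and confirms $v \in \Xv$, while the periodic boundary conditions hold by construction of $G$.

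The main obstacle is not conceptual but organizational: threading the pointwise-in-$x$ ODE solution through the $\esssup$ in space together with the $L^2_t$ and $L^\infty_t$ time norms while keeping every constant independent of $x$, and confirming measurability of the assembled solution. The construction and uniform boundedness of the periodic Green's function $G$ and of $G_t$ is the only genuinely technical point, but it is classical once the spectral bound $\rho(\Phi(T)) < 1$ — a direct consequence of the damping $\delta \omega_0 > 0$ — is in hand.
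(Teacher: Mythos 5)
Your proposal is correct and follows essentially the same route as the paper's proof (Lemma~\ref{lemma: ode} with details in Appendix~A): both pass to the first-order system, use the damping $\delta \omega_0 > 0$ to get invertibility of $I - \Phi(T)$ for the monodromy matrix, and construct the periodic solution by variation of constants with $V(0) = (I-\Phi(T))^{-1}\Phi(T)\int_0^T \Phi(\tau)^{-1}F(\tau)\,\textup{d}\tau$ — your periodic Green's function is a clean repackaging of this formula that spares you the paper's explicit three-case computation of the fundamental matrix. One cosmetic correction: $\real\, \lambda_\pm = -\delta\omega_0/2$ holds only in the underdamped case; in the overdamped case the two real roots differ from each other, but both are negative (their sum is $-\delta\omega_0$ and their product is $\omega_0^2 > 0$), so the needed spectral bound $\rho(\Phi(T)) < 1$ holds in all three regimes and your argument goes through unchanged.
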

\begin{proof}
The existence and uniqueness can be established using Floquet theory (see, for example~\cite{schmidt1976yakubovich} and~\cite[Ch.\ 3]{grimshaw2017nonlinear}) as the only solution of the homogeneous problem is zero due to the fact that $\delta \omega_0>0$. The details are provided in Appendix~\ref{proof Lemma} for completeness.
\end{proof}
\begin{proposition}[see{~\cite[Theorem 2.1]{Rainer2024nonlinear}}]\label{prop: periodic lin wave}
Let $T>0$, $\Omega \subset \mathbb{R}^d$, where $d \in \{2,3\}$, be a bounded domain with $C^{1,1}$ boundary, and $\beta,\,\gamma>0$, $c$, $b>0$.  Let $h\in \LtwoTLtwo$. Then there exists a unique (weak) solution 
\begin{equation} \label{def Xp}
\begin{aligned}	
	p \in \Xp =\, \begin{multlined}[t] \Bigl\{p \in H^2(0,T; \Ltwo)  \cap H^1(0,T;H^{3/2}(\Omega)) \cap L^2(0,T;H^2(\Omega)):\\   
		|| \trace{\Omega}{\nabla p\cdot\vecn} ||_{H^1(L^2(\partial\Omega))} < \infty,\ \|\Delta \pt\|_{L^2(\Ltwo)}<\infty,\, p(0) = p(T),\ \pt(0) = \pt(T) \text{ a.e.}\Bigr\}
		\end{multlined}
\end{aligned}	
\end{equation} 
of the time-periodic boundary-value problem
\begin{equation}\label{eq:wave:linear:periodic}
\left\{	\begin{aligned}
			&\ptt - c^2 \Delta p - b \Delta \pt = h\quad && \text{in }  \Omega \times  (0,T),\\
			&\beta \pt + \gamma p + \nabla p \cdot \vecn = 0 && \text{on } {\partial\Omega \times (0,T)}, \\
			&p(0) = p(T), \ \pt(0) = \pt(T) && \text{in } \Omega. 
		\end{aligned}   \right.
\end{equation}
The solution satisfies
\begin{equation} \label{est ln wave}
\begin{aligned}
	\Tnorm{p}\coloneqq&\,\begin{multlined}[t] ||p||_{\LtwoHtwo} +	||\pt||_{\LtwoHthreehalf}+\|\ptt\|_{\LtwoLtwo} 
		+ \|\Delta \pt\|_{L^2(\Ltwo)} + \|\nabla p \cdot \bfn\|_{H^1(L^2(\partial \Omega))} 
		\end{multlined}\\
	\leq&\,{ C(b, \gamma, c, \beta, T, \Omega) ||h||_{ L^2(0,T;L^2(\Omega))}},
\end{aligned}	
\end{equation}
for some constant $C=C(b, \gamma,c,\beta, T, \Omega) > 0$, independent of $h$.
\end{proposition}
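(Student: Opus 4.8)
The plan is to exploit that the coefficients in \eqref{eq:wave:linear:periodic} are constant in time and to decouple the problem in the time variable via a temporal Fourier expansion, in the spirit of the multiharmonic Ansatz \eqref{cut off approx} used later. Writing $\omega = 2\pi/T$ and expanding $h(x,t) = \sum_{k \in \Z} \hat h_k(x)\, e^{\imath k \omega t}$ and, as an Ansatz, $p(x,t) = \sum_{k \in \Z} \hat p_k(x)\, e^{\imath k \omega t}$, the periodicity constraints $p(0)=p(T)$, $\pt(0)=\pt(T)$ are automatically encoded, and inserting the series formally decouples the equation into a family of complex Helmholtz--Robin problems for the Fourier coefficients:
\begin{equation*}
	-(c^2 + \imath b k \omega)\Delta \hat p_k - (k\omega)^2 \hat p_k = \hat h_k \ \text{ in } \Om, \qquad \nabla \hat p_k \cdot \vecn + (\gamma + \imath \beta k \omega)\hat p_k = 0 \ \text{ on } \partial \Om.
\end{equation*}
The strategy is then to (i) solve each mode problem, (ii) establish a priori bounds on $\hat p_k$ that are \emph{uniform in $k$ with the correct powers of $k\omega$}, and (iii) reassemble the series and pass to $\Xp$ by Parseval's identity.

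For step (i), the mode $k=0$ is a standard Robin--Laplace problem, coercive because $\gamma > 0$. For $k \neq 0$, I would pose the sesquilinear form associated with the mode problem on $\Hone$ (over $\C$) and test with $\hat p_k$: the bulk damping coefficient $b$ and the boundary damping coefficient $\beta$ contribute a non-vanishing imaginary part proportional to $k\omega\,(\|\nabla \hat p_k\|_{L^2}^2 + \|\hat p_k\|_{L^2(\partial \Om)}^2)$, which, together with the equivalence of this quantity to $\|\hat p_k\|_{H^1}^2$, yields coercivity (with a $k$-dependent constant). The complex Lax--Milgram lemma then gives a unique $\hat p_k \in \Hone$, and since $\partial \Om$ is $C^{1,1}$, elliptic regularity upgrades this to $\hat p_k \in \Htwo$ with the corresponding $H^2$ bound.

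The crux is step (ii): I need estimates of the form $(k\omega)^2\|\hat p_k\|_{L^2} \lesssim \|\hat h_k\|_{L^2}$, $\ (k\omega)\|\Delta \hat p_k\|_{L^2} \lesssim \|\hat h_k\|_{L^2}$, and $\|\hat p_k\|_{H^2} \lesssim \|\hat h_k\|_{L^2}$, together with the matching trace bound for $\nabla \hat p_k \cdot \vecn$, all with constants independent of $k$. These are maximal-regularity-type multiplier bounds reflecting the parabolic smoothing induced by the strong damping term $-b\Delta \pt$; on the symbol level (say, after a further spatial Fourier transform) the relevant multiplier $\tfrac{k\omega\,|\xi|^2}{\bigl|(c^2+\imath b k \omega)|\xi|^2 - (k\omega)^2\bigr|}$ is indeed bounded uniformly in $(\xi,k)$, which is exactly what makes the claim plausible. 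To realize this rigorously on the bounded domain, I would test the mode equation both with $\hat p_k$ and with $-\Delta \hat p_k$, separate real and imaginary parts, and combine them. The genuine difficulty is that the boundary contribution produced by testing with $-\Delta \hat p_k$ is sign-indefinite and scales like $(k\omega)^3\|\hat p_k\|_{L^2(\partial \Om)}^2$, so it must be absorbed via the trace inequality and the lower-order bounds rather than discarded; recovering the full $H^1(L^2(\partial\Om))$-control of $\nabla p \cdot \vecn$ with the right weights is the most delicate point, and is where the $C^{1,1}$ regularity and the precise interplay of $b$, $\beta$, $\gamma$ are essential.

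Finally, for step (iii), Parseval's identity turns each norm in $\Tnorm{p}$ into a weighted $\ell^2$-sum over $k$ of the mode norms, with weights $(k\omega)^2$, $(k\omega)^4$, etc., matching precisely the powers controlled in step (ii); summing the mode estimates then gives $\Tnorm{p}^2 \lesssim \sum_{k} \|\hat h_k\|_{L^2}^2 \simeq \|h\|_{L^2(L^2)}^2$, which simultaneously certifies $p \in \Xp$ and the estimate \eqref{est ln wave}. Uniqueness is then immediate: for $h = 0$ every $\hat h_k$ vanishes, so the coercivity of each mode problem forces $\hat p_k = 0$ and hence $p = 0$. I expect the uniform maximal-regularity estimates of step (ii), and in particular the careful treatment of the boundary trace terms, to be the main obstacle; since the statement coincides with~\cite[Theorem 2.1]{Rainer2024nonlinear}, one may alternatively invoke that result directly.
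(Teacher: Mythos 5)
First, a point of reference: the paper does not prove this proposition at all --- it imports it verbatim from \cite[Theorem 2.1]{Rainer2024nonlinear} (hence the ``see'' in the statement), so your closing remark that one may invoke that result directly is, in fact, the paper's entire argument. Your constructive route --- temporal Fourier expansion, reduction to a family of complex Helmholtz--Robin mode problems, uniform weighted mode estimates, and Parseval reassembly --- is the natural one and consistent in spirit with the cited source; steps (i) and (iii) are fine as sketched, and uniqueness via vanishing modes is correct.

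As a standalone proof, however, your step (ii) has a genuine gap, concentrated exactly where you flag difficulty. Two concrete points. First, your list of required mode bounds omits $(1+k\omega)\,\|\hat p_k\|_{H^{3/2}(\Omega)} \lesssim \|\hat h_k\|_{\Ltwo}$, which is indispensable because $\|\pt\|_{\LtwoHthreehalf}$ is part of $\Tnorm{p}$; it does not follow from the bounds you do list without an additional chain: the imaginary-part energy identity combined with the resolvent bound $(k\omega)^2\|\hat p_k\|_{\Ltwo}\lesssim\|\hat h_k\|_{\Ltwo}$ (itself a nontrivial maximal-regularity ingredient) gives $(k\omega)^{3/2}\|\hat p_k\|_{\Hone}\lesssim\|\hat h_k\|_{\Ltwo}$, which must then be interpolated against a \emph{weighted} $H^2$ bound obtained from elliptic regularity with the $k$-dependent impedance datum $-(\gamma+\imath\beta k\omega)\hat p_k$. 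Second, your plan for the trace norm does not close as stated. Via the boundary condition, $\nabla\hat p_k\cdot\vecn=-(\gamma+\imath\beta k\omega)\hat p_k$ on $\partial\Omega$, so per mode you need $(1+k\omega)^2\|\hat p_k\|_{L^2(\partial\Omega)}\lesssim\|\hat h_k\|_{\Ltwo}$; but testing with $\hat p_k$ and taking imaginary parts yields only $(k\omega)^{3/2}\|\hat p_k\|_{L^2(\partial\Omega)}\lesssim\|\hat h_k\|_{\Ltwo}$, and absorbing the sign-indefinite $(k\omega)^3$ boundary term from testing with $-\Delta\hat p_k$ through the trace inequality leaves you a fractional power of $k\omega$ short (with the sharp trace estimate $\|u\|^2_{L^2(\partial\Omega)}\lesssim\|u\|_{\Ltwo}\|u\|_{\Hone}$ one lands at an excess of order $(k\omega)^{1/2}$ in the squared norms). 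The mechanism that actually closes this component is different: the normal-trace theorem for functions $u\in H^{3/2}(\Omega)$ with $\Delta u\in\Ltwo$ on a $C^{1,1}$ domain, applied with the weighted bounds $(1+k\omega)\bigl(\|\hat p_k\|_{H^{3/2}(\Omega)}+\|\Delta\hat p_k\|_{\Ltwo}\bigr)\lesssim\|\hat h_k\|_{\Ltwo}$, which controls $(1+k\omega)\|\nabla\hat p_k\cdot\vecn\|_{L^2(\partial\Omega)}$ directly, bypassing the boundary condition altogether. So the skeleton of your argument is right, but the uniform estimates are asserted rather than proven, and the specific absorption strategy you propose for the boundary terms would fail; either supply the chain above, or rely --- as the paper does --- on the citation.
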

We note that having $\gamma>0$ in \eqref{eq:wave:linear:periodic} is crucial for uniqueness. Furthermore, having strong damping (that is, $-b \Delta \pt$ with $b>0$) in the wave equation contributes to its parabolic-like character, and is heavily exploited in the proof of the above statement. We also note that the functions $f$ and $h$ do not have to be time periodic for the well-posedness results in this section, although we will make that assumption on the acoustic source in Section~\ref{sec: multiharmonic} to develop (iterative) multiharmonic algorithms.
\subsection{Analysis of the Westervelt-ODE system}  
Equipped with the previous results on linear problems, we are now ready to prove the well-posedness of the coupled pressure-volume problem. To state it, we introduce 
	\begin{equation} \label{def ballr}
	\begin{aligned}
		\ballr = \left\{ (p, v) \in \Xp \times \Xv:\, \|p\|_{\Xp} \leq \rp, \ \|v\|_{\Xv} \leq \rv \right \},
	\end{aligned}
\end{equation}
where $\rp>0$ and $\rv>0$ will be set as small enough in the course of the proof below. The proof will have constructive nature using successive approximations. 
 \begin{theorem} \label{thm: existence West-volume}
Let $T>0$, $\Omega \subseteq \mathbb{R}^d$, $d \in \{2,3\}$, be open, bounded, connected, with $C^{1,1}$ boundary, $\beta$, $\gamma$,  $c$,  $b$,  {$\delta$, $\omega_0>0$} and $\eta$, $\mu$, $\ca$, $\cb \in \R$, $n_0 \in L^\infty(\Omega)$, $h \in L^2(0,T; \Ltwo)$. 
Then there exist $\deltap>0$ and $\deltar>0$, such that if 
\begin{equation} \label{smallness condition}
	||h||_{\LtwoTLtwo} \leq \deltap \quad \text{and} \quad \|\change{n_0}\|_{\Linf} \leq \deltar, 
\end{equation}
then there exist radii $\rp>0$ and $\rv>0$, such that problem \eqref{ibvp:westervelt volume periodic} has a unique solution $(p, v) \in \ballr$.
\end{theorem}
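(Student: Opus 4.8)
The plan is to recast \eqref{ibvp:westervelt volume periodic} as a fixed-point problem built from the two linear solvers and to apply the Banach fixed-point theorem on the closed set $\ballr$. Given $(\bar p,\bar v)\in\ballr$, I would define $(p,v)=\calT(\bar p,\bar v)$ by decoupling the system: let $p\in\Xp$ be the unique solution furnished by Proposition~\ref{prop: periodic lin wave} for the linear periodic wave problem with right-hand side
\[
h+\eta(\bar p^2)_{tt}+c^2\rho_0 n_0\,\bar v_{tt},
\]
and let $v\in\Xv$ be the unique solution furnished by Lemma~\ref{lemma: ode} for the linear periodic ODE with forcing
\[
\ca\bar v^2+\cb\bigl(2\bar v\,\bar v_{tt}+\bar v_t^2\bigr)-\mu\bar p.
\]
For $\calT$ to be well-defined I first need these forcings to lie in $\LtwoTLtwo$ and $\LtwoTLinf$, respectively. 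The ODE forcing is immediate, since $\Xv=H^2(0,T;\Linf)$ controls $\bar v,\bar v_t$ in $L^\infty(\Linf)$ and $\bar v_{tt}$ in $\LtwoTLinf$, while $\bar p\in L^2(\Htwo)\hookrightarrow\LtwoTLinf$. For the wave source the delicate term is $(\bar p^2)_{tt}=2\bar p\,\bar p_{tt}+2\bar p_t^2$; here I would use the interpolation embedding $\Xp\hookrightarrow C([0,T];H^{7/4}(\Omega))\hookrightarrow\LinfLinf$ (valid for $d\le 3$, since $[H^2,H^{3/2}]_{1/2}=H^{7/4}$ and $7/4>d/2$) together with $\bar p_t\in L^4(\Lfour)$ to bound all products in $\LtwoTLtwo$ by a constant times $\Tnorm{\bar p}^2$.

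Next I would check the self-mapping property $\calT(\ballr)\subseteq\ballr$. Combining the linear estimates with the product bounds yields, schematically,
\[
\Tnorm{p}\le C_p\bigl(\deltap+\eta\,\rp^2+\rho_0 c^2\,\deltar\,\rv\bigr),\qquad
\|v\|_{\Xv}\le C_v\bigl(C'\rv^2+|\mu|\,\rp\bigr),
\]
where I have used the smallness hypotheses $\|h\|_{\LtwoTLtwo}\le\deltap$ and $\|n_0\|_{\Linf}\le\deltar$. The radii are then fixed in a cascade: choose $\rp$ of order $\deltap$, then $\rv$ of order $|\mu|\rp$, and take $\deltap,\deltar$ small enough that the quadratic contributions $\eta\rp^2$, $C'\rv^2$ and the coupling term $\rho_0 c^2\deltar\rv$ are absorbed into the leading linear terms. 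This gives $\Tnorm{p}\le\rp$ and $\|v\|_{\Xv}\le\rv$.

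For the contraction I would estimate $\calT(\bar p_1,\bar v_1)-\calT(\bar p_2,\bar v_2)$ in $\Xp\times\Xv$, exploiting the linearity of the two solution operators and the Lipschitz continuity of the nonlinear forcings on $\ballr$. Setting $w=\bar p_1-\bar p_2$, $s=\bar p_1+\bar p_2$ and expanding $(\bar p_1^2-\bar p_2^2)_{tt}=(ws)_{tt}$, the acoustic nonlinearity is Lipschitz with constant $\propto\rp$ and the ODE nonlinearities with constant $\propto\rv$, so the difference estimates take the form
\[
\Tnorm{p_1-p_2}\le a\,\Tnorm{\bar p_1-\bar p_2}+b\,\|\bar v_1-\bar v_2\|_{\Xv},\qquad
\|v_1-v_2\|_{\Xv}\le c\,\Tnorm{\bar p_1-\bar p_2}+d\,\|\bar v_1-\bar v_2\|_{\Xv},
\]
with $a\propto\eta\rp$, $b\propto\rho_0 c^2\deltar$ and $d\propto\rv$ all small, but $c\propto|\mu|$ of order one. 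The main obstacle is exactly this asymmetry: the pressure-to-volume coupling $-\mu\bar p$ carries no smallness, so the one-step map is not obviously contractive. The resolution is that the feedback-loop gain $bc\propto\rho_0 c^2\deltar|\mu|$ is small because $n_0$ is small, so there exists a weight $\lambda$ (any $b/(\theta-d)\le\lambda\le(\theta-a)/c$ with $\theta<1$, which is a nonempty range once $bc$ is small) for which the weighted norm $\Tnorm{\cdot}+\lambda\|\cdot\|_{\Xv}$ turns $\calT$ into a strict contraction on $\ballr$. Banach's fixed-point theorem then delivers a unique fixed point $(p,v)\in\ballr$ solving \eqref{ibvp:westervelt volume periodic}, and uniqueness within the ball is inherited from the contraction.
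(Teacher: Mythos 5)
Your proposal is correct and follows essentially the same route as the paper: successive approximation via the two linear solvers (Proposition~\ref{prop: periodic lin wave} and Lemma~\ref{lemma: ode}) on the closed ball $\ballr$, with the order-one coupling $-\mu \bar p$ tamed exactly as in the paper by a weighted norm $\Tnorm{\cdot}+\lambda\|\cdot\|_{\Xv}$ whose admissible $\lambda$-range is nonempty because the loop gain carries the small factor $\|n_0\|_{\Linf}$. The only differences are cosmetic: you iterate in Jacobi fashion (both forcings from the previous iterate) while the paper sweeps Gauss--Seidel style (the freshly computed $v^{(N)}$ enters the wave equation at the same step), and you invoke Banach's fixed-point theorem directly where the paper writes out the Cauchy-sequence and uniqueness arguments by hand.
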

\begin{proof}
	As announced, the proof follows by constructing the solutions using successive approximations. Let $\rp$, $\rv>0$. We take $(\pfixedzero, \vfixedzero) \in \ballr$ and set up successive approximations using the system given by
	\begin{subequations}\label{iterative method proof}
	\begin{equation}	\label{eq pN proof}
		\left\{	\begin{aligned}
			\ &\pfixedN_{tt} - c^2 \Delta \pfixedN - b \Delta \pfixedN_t = \eta ((\pfixedNnegone)^2)_{tt}+c^2 \rho_0 n_0 \vfixedN_{tt} + h \, && \text{in } \Omega \times  (0,T),\\
			&\beta \pfixedN_t + \gamma \pfixedN + \nabla \pfixedN \cdot \vecn = 0 && \text{on }  \partial\Omega \times (0,T), \hspace{-0.28cm}\\
			&\pfixedN(0) = \pfixedN(T), \ \pfixedN_t(0) = \pfixedN_t(T) && \text{in } \Omega,
		\end{aligned} \right.
	\end{equation}
	and
	\begin{equation} \label{eq vN proof}
		\left\{	\begin{aligned}
			&	\vfixedN_{tt} + \delta \omega_0 \vfixedN_t + \omega_0^2 \vfixedN && \\
			=&\,  -  \mu\pfixedNnegone +\ca (\vfixedNnegone)^2  - \cb \left( 2 \vfixedNnegone \vfixedNnegone_{tt} + (\vfixedNnegone_t)^2 \right)  && \text{in }   \Omega \times (0,T),\\
			&\vfixedN(0) = \vfixedN(T), \ \vfixedN_t(0) = \vfixedN_t(T) && \text{in } \Omega.
		\end{aligned} \right.
	\end{equation}
\end{subequations}
We proceed through several steps. \\[1mm]
\noindent (i)	Let $N=1$. 	Note that we can first solve \eqref{eq vN proof} and use its solution as the input for solving \eqref{eq pN proof}. By Lemma~\ref{lemma: ode}, we have a unique $\vfixedone \in \Xv$ that solves \eqref{eq vN proof}, such that
	\begin{equation} \label{est vN one}
		\begin{aligned}
			\|\vfixedone\|_{\Xv} \lesssim  \|\pfixedzero\|_{L^2(\Linf)}+ \|\vfixedzero\|^2_{\Xv}.
		\end{aligned}
	\end{equation}
	From here, we find that
	\begin{equation} 
		\begin{aligned}
			\|\vfixedone\|_{\Xv} \lesssim \rp+ \rv^2,
		\end{aligned}
	\end{equation}
	and we can conclude that $\|\vfixedone\|_{\Xv} \leq \rv$ provided \change{$\rp$ and $\rv$ are small enough}. By Proposition~\ref{prop: periodic lin wave}, the linear problem in \eqref{eq pN proof} for $N=1$ has a unique solution $\pfixedone \in \Xp$, such that
	\begin{equation} \label{est pN one}
	\begin{aligned}
		\|\pfixedone\|_{\Xp} 
		\lesssim& \,  \| -\eta((\pfixedzero)^2)_{tt}  - c^2 \rho_0 n_0  {\vfixedone_{tt}}  + h\|_{\LtwoLtwo} \\
		\lesssim& \,  \|\pfixedzero\|^2_{\Xp} + \|n_0\|_{\Linf}\|\vfixedone\|_{\Xv}  + \|h\|_{\LtwoLtwo}.
	\end{aligned}
\end{equation}	
By using the fact that $\|\vone\|_{\Xv} \leq \rv$, we obtain
	\begin{equation}
	\begin{aligned}
		\|\pfixedone\|_{\Xp} 
		\lesssim \rp^2 + \|n_0\|_{\Linf} \rv  + \|h\|_{\LtwoLtwo}.
	\end{aligned}
\end{equation}	
From here, we can conclude that $\|\pfixedone\|_{\Xp} \leq \rp$ for small enough $\|h\|_{L^2(\Ltwo)}$ by reducing $\rp$ and $\|n_0\|_{\Linf}$. \\[2mm]
\noindent (ii)	  Let now $(\pfixedNnegone, \vfixedNnegone) \in \ballr$. By Lemma~\ref{lemma: ode}, we have
	\begin{equation} \label{est vN}
		\begin{aligned}
			\|\vfixedN\|_{\Xv} \lesssim \|\pfixedNnegone\|_{L^2(\Linf)}+ \|\vfixedNnegone\|^2_{\Xv}
		\end{aligned}
	\end{equation} 
	and by Proposition~\ref{prop: periodic lin wave}, the linear problem in \eqref{eq pN proof} with periodic time conditions has a unique solution $\pfixedN \in \Xp$, such that
	\begin{equation} \label{est pN}
		\begin{aligned}
			\|\pfixedN\|_{\Xp} 
			\leq& \, C \| -\eta((\pfixedNnegone)^2)_{tt}  - c^2 \rho_0 n_0  \vfixedN_{tt}  + h\|_{\LtwoLtwo}\\
			 \lesssim&\,  \|\pfixedNnegone\|^2_{\Xp} + \|n_0\|_{\Linf}\|\vfixedN\|_{\Xv}  + \|h\|_{\LtwoLtwo}.
		\end{aligned}
	\end{equation}	
	By then proceeding analogously to the case $N=1$, we obtain
	\begin{equation} \label{bounds}
		\begin{aligned}
		\|\pfixedN\|_{\Xp} \leq \rp, \quad \|\vfixedN\|_{\Xv} \leq \rv,
		\end{aligned}
	\end{equation}
provided $\|n_0\|_{\Linf}$ and $\|h\|_{L^2(\Ltwo)}$ are small enough. 
We thus conclude that for small enough $\|h\|_{\LtwoLtwo}$ and $\|n_0\|_{\Linf}$, there exist $\rp>0$ and $\rv>0$, such that if $(\pfixedzero, \vfixedzero) \in \ballr$, then for each $N \geq 1$, problem \eqref{iterative method proof} has a unique solution $(\pfixedN, \vfixedN) \in \ballr$.	\\[2mm]
\noindent (iii)	 We next wish to show that $\{(\pfixedN, \vfixedN)\}_{N \geq 1}$ is a Cauchy sequence in $\Xp \times \Xv$. 
 We note that for $N_1$, $N_2 \in \N$, the difference $(\op, \ov)=(\pfixedNone-\pfixedNtwo, \vfixedNone-\vfixedNtwo)$ solves
	\begin{subequations}\label{Cauchy proof}
	\begin{equation}\label{weak eq pN Cauchy}
	\left\{	\begin{aligned}
		\ &\op_{tt} - c^2 \Delta \op - b \Delta \op_t && \\
		=&\, -\eta((\pfixedNonenegone-\pfixedNtwonegone)(\pfixedNonenegone+\pfixedNtwonegone))_{tt}  - c^2 \rho_0 n_0  (\vfixedNone_{tt}-\vfixedNtwo_{tt}) \, && \text{in } \Omega,\\
		&\beta \op_t + \gamma \op + \nabla \op \cdot \vecn = 0 && \text{on }  \partial\Omega, \hspace{-0.28cm}\\
	\end{aligned} \right.
\end{equation}
	 and 
	\begin{equation} \label{weak eq vN Cauchy}
		\left\{		\begin{aligned}
		&\ov_{tt} + \delta \omega_0 \ov_t + \omega_0^2 \ov \\
		=&\,\begin{multlined}[t]
			- \mu (\pfixedNonenegone-\pfixedNtwonegone)+  \ca (\vfixedNonenegone-\vfixedNtwonegone)(\vfixedNonenegone+\vfixedNtwonegone) 
			\\- \cb \bigl( 2 (\vfixedNonenegone-\vfixedNtwonegone) \vfixedNonenegone_{tt}\\ +\vfixedNtwonegone(\vfixedNonenegone_{tt}-\vfixedNtwonegone_{tt}) 
			+ (\vfixedNonenegone_t-\vfixedNtwonegone_t)(\vfixedNonenegone_t+\vfixedNtwonegone_t)\bigr) \quad  \text{in } \Omega.
		\end{multlined}
		\end{aligned} \right.
	\end{equation}
\end{subequations}
From here, similarly to before, using Proposition~\ref{prop: periodic lin wave} and Lemma~\ref{lemma: ode}, we obtain
\begin{equation} \label{10}
	\begin{aligned}
		\|\pfixedNone-\pfixedNtwo\|_{\Xp} \lesssim \rp \|\pfixedNonenegone-\pfixedNtwonegone\|_{\Xp} + \|n_0\|_{\Linf}\|\vfixedNone-\vfixedNtwo\|_{\Xv}
	\end{aligned}
\end{equation}
and
\begin{equation} \label{1a}
	\begin{aligned}
		\|\vfixedNone-\vfixedNtwo\|_{\Xv} \lesssim&\, \begin{multlined}[t]  \|\pfixedNonenegone-\pfixedNtwonegone\|_{\Xp}+ \rv \|\vfixedNonenegone-\vfixedNtwonegone\|_{\Xv}.
		\end{multlined}
	\end{aligned}
\end{equation}
By adding $\eqref{10}+ \lambda \cdot \eqref{1a}$, with $\lambda>0$, we obtain
\begin{equation}
	\begin{aligned}
		&\|\pfixedNone-\pfixedNtwo\|_{\Xp} + (\lambda - C \|n_0\|_{\Linf})\|\vfixedNone-\vfixedNtwo\|_{\Xv}\\
		 \leq&\,
C	(	\lambda + \rp) \|\pfixedNonenegone-\pfixedNtwonegone\|_{\Xp}  + C \lambda \rv  \|\vfixedNonenegone-\vfixedNtwonegone\|_{\Xv}
	\end{aligned}
\end{equation}
for some $C>0$. 
We then impose $C \|n_0\|_{\Linf} < \lambda$,
and choose $\rp$, $\lambda$, and $\rv$ sufficiently small to conclude that $\{(\pfixedN, \vfixedN)\}_{N \geq 1}$ is a Cauchy sequence. Thus, there exists $(p,v)$ such that $\{(\pfixedN, \vfixedN)\}_{N \geq 1}$ converges to it in $\Xp \times \Xv$. Passing to the limit in \eqref{iterative method proof} proves that $(p,v)$ solves the pressure-volume problem. \\[2mm]
\noindent (iv)	Finally, to show that such a constructed solution is unique, we take 
\[
(\puniqone, \vuniqone), \, (\puniqtwo, \vuniqtwo) \in \ballr,
\]
and note that the difference $(\op, \ov)=(\puniqone-\puniqtwo, \vuniqone-\vuniqtwo)$ solves
	\begin{subequations}\label{uniqueness}
		\begin{equation}\label{weak eq p uniqueness}
		\left\{	\begin{aligned}
			\ &\op_{tt} - c^2 \Delta \op - b \Delta \op_t =  -\eta(\op(\puniqone+\puniqtwo))_{tt}  - c^2 \rho_0 n_0  \ov_{tt} \, && \text{in } \Omega \times  (0,T),\\
			&\beta \op_t + \gamma \op + \nabla \op \cdot \vecn = 0 && \text{on }  \partial\Omega \times (0,T), \hspace{-0.28cm}\\
		\end{aligned} \right.
	\end{equation}
	 and 
	\begin{equation} \label{weak eq v uniqueness}
		\begin{aligned}
			\ov_{tt} + \delta \omega_0 \ov_t + \omega_0^2 \ov 
			=- \mu \op +  \ca \ov \left(\vuniqone+\vuniqtwo\right) - \cb \left( 2 (\ov \vuniqone_{tt}+\vuniqtwo\ov_{tt} )
				+ \ov_t(\vuniqone_t+\vuniqtwo_t)\right).
		\end{aligned}
	\end{equation}
\end{subequations}	
We obtain analogously to before
	\begin{equation} \label{11}
		\begin{aligned}
			\|\op\|_{\Xp} \lesssim \rp \|\op\|_{\Xp}+ \|n_0\|_{\Linf} \|\ov\|_{\Xv}
		\end{aligned}
	\end{equation}
	and
	\begin{equation} \label{22}
		\begin{aligned}
			\|\ov\|_{\Xv} \lesssim \|\op\|_{\Xp}+ \rv \|\ov\|_{\Xv}.
		\end{aligned}
	\end{equation}
For $\rv$, $\rp$, and $\|n_0\|_{\Linf}$ small enough, these inequalities allow us to conclude that $\|\op\|_{\Xp}=\|\ov\|_{\Xv}=0$. This step completes the proof. \qed
\end{proof}

The small-solution setting of Theorem~\ref{thm: existence West-volume} effectively forces the nonlinear ODE to retain the damped harmonic oscillator structure with $T$-periodic coefficients. Indeed, under the assumptions of Theorem~\ref{thm: existence West-volume}, for $\rv$ small enough, so that
	\begin{equation}
		\max\, \left\{ 2| \cb|, \frac{1}{\delta \omega_0}| \cb|, \frac{1}{\omega_0^2} |\ca| \right\} \cdot \rv <1,
	\end{equation}
	 we can rewrite the ODE in the following form:
\begin{equation} \label{OD rewritten}
	\begin{aligned}
		\vtt + \frac{\delta \omega_0-\cb \vt}{1-2 \cb v} \vt + \frac{\omega_0^2-\ca v}{1-2 \cb v} v =- \frac{\mu}{1-2\cb v} p.
	\end{aligned}
\end{equation}
Setting $\ell = \dfrac{\delta \omega_0-\cb \vt}{1-2 \cb v}>0$, $q =\dfrac{\omega_0^2-\ca v}{1-2 \cb v}>0$, and $f = - \dfrac{\mu}{1-2\cb v} p$,  we see that the volume fluctuation $v$ can be represented as the solution of a second-order ODE (pointwise a.e.\ in space) with positive $T$-periodic coefficients and a $T$-periodic right-hand side, namely
\begin{equation} 
		\begin{aligned}
		&  \vtt+\ell(t)\vt+q(t) v =f(t),
	\end{aligned}
\end{equation}
where $\ell(0)=\ell(T)$, $q(0)=q(T)$,  $f(0)=f(T)$. 
\subsection{On the smallness assumption} \label{sec: smallness}
The assumption \eqref{smallness condition} on the smallness of the microbubble number density  $n_0$ in Theorem~\ref{thm: existence West-volume} regulates the strength of the source of acoustic waves, along with the smallness of the external source $h$.  To assess how meaningful this assumption is, we consider a nondimensional system obtained by introducing the scaling
\begin{equation}
	\begin{aligned}
		x = L \tilde{x}, \qquad t = \frac{L}{c} \tilde{t}, \quad p = \pref \, \tilde{p},  \qquad v = \vref \, \tilde{v},
	\end{aligned}
\end{equation}
where $L$ is a characteristic length scale, $\pref$ is a reference pressure amplitude, and $\vref$ a reference volume.  After the change of variables and division by $\frac{c^2 \pref}{L^2}$, we obtain the dimensionless pressure equation:
	\begin{equation} \label{West_RPE nondim}
			\begin{aligned}
				&\ \ \tilde{p}_{tt} -  \Delta \tilde{p} - \tilde{b} \Delta \tilde{p}_t = \tilde{\eta} (\tilde{p}^2)_{tt} + \tilde{\kappa}  {n}_0 \tilde{v}_{tt} 
				+\tilde{ \alpha} h \quad &&\text{in } \Omega \times (0,T), 
			\end{aligned} 
	\end{equation}
	where the relevant transformed coefficient is given by
	\begin{equation}
\tilde{\kappa} {n}_0 = \frac{ \rho_0 c^2 \vref }{\pref} {n}_0.
	\end{equation}
Since $\vref n_0$ can be understood as the gas volume fraction (see~\cite[Sec.~2]{nikolic2024mathematicalmodelsnonlinearultrasound}), the smallness assumption corresponds to requiring moderate concentrations of microbubbles. For typical parameters in medical imaging, $c \sim 1500$m$/$s, $\rho \sim 1000\, $kg$/$m$^3$, $\pref \in [10^5 , 10^6]\, $Pa, and $R_0 \sim 10^{-6}\,$m, $\vref = \frac43 R_0^3 \pi$, we obtain $\tilde{\kappa} n_0 \ll 1$ for microbubble densities in the range $n_0 \in [10^9, 10^{12}]\,$ bubbles$/m^3$.

\section{Time discretization via a multiharmonic Ansatz for real fields} \label{sec: multiharmonic}
In this section, we discuss the time discretization of the system by using a multiharmonic Ansatz, in the general spirit of~\cite{kaltenbacher2021periodic}. We focus on a special case of having $T$-periodic acoustic excitation. That is, we assume that  the acoustic source term has the form $h=g_{tt}$, where the function $g$ is given by 
\begin{equation} \label{def g}
g(x,t)=\real \left\lbrace \sum_{m=0}^M \exp(\imath m \omega t)h_m^M(x)\right\rbrace, \quad \omega = \dfrac{2 \pi}{T}, \quad h_m^M \in L^2(\Omega; \C).
\end{equation}
Under the assumptions of Theorem~\ref{thm: existence West-volume}, a unique time-periodic solution $(p, v)$ of the system given in \eqref{ibvp:westervelt volume periodic} exists. 
For numerical approximations,  we thus employ the following multiharmonic Ansatz: 
	\begin{equation} \label{finite_approx}
			\begin{aligned}
					u^N(x,t)=&\, \frac{1}{2} \sum_{m=0}^{N} \left( \exp(\imath m \omega t) u_m^N(x) + \exp(-\imath m \omega t) \overline{u^N_m(x)} \right) \\
					=&\, \real \left\lbrace \sum_{m=0}^{N}  \exp(\imath m \omega t) u_m^N(x)\right\rbrace.
				\end{aligned}
		\end{equation}
	Alternatively, the multiharmonic Ansatz can be written in a more standard Fourier series form:
	\begin{equation}
		u^N(x,t) = \sum_{m=0}^N \left [u^c_m(x) \cos(m \omega t)+ u^s_m(x)\sin(m\omega t)\right ]
	\end{equation}
	with $u^N_m(x) = u_m^c(x)- \imath u_m^s(x)$. We then look for the approximate pressure-volume field $(\pN, \vN) \in \XN \times \XN$, with
\begin{equation}
	X_N = \left\lbrace \real \left\lbrace \sum_{m=0}^N \exp( \imath m \omega t) u_m^N(x) \right\rbrace \, : \, u_m^N \in H^2(\Omega; \C) \right\rbrace,
\end{equation}
where the coefficients $(\pN_m, \vN_m)$ in the expansion are determined from the following system: 
\begin{equation} 	\label{semi-discrete}
		\left\{	\begin{aligned}
			\ &\pttN - c^2 \Delta \pN - b \Delta \ptN &&\\ 
			=&\, \eta\,\projN \left[ ((\pN)^2)_{tt}\right]
			+c^2 \rho_0 n_0 \vN + \projN h  &&\text{in } \Omega \times  (0,T),\\[1mm]
			&\beta \ptN + \gamma \pN + \nabla \pN \cdot \vecn = 0 &&\ \text{on }  \partial\Omega \times (0,T),\\[1mm]
			&	\vttN+ \delta \omega_0 \vtN + \omega_0^2 \vN &&\\
			=&\,  -  \mu \pN +  \,\projN \left[\ca (\vN)^2 
		+ \cb \left( 2 \vN \vttN + (\vtN)^2 \right) \right]  &&\text{in }   \Omega \times (0,T).
		\end{aligned} \right.
\end{equation}
We next show that \eqref{semi-discrete} can be equivalently rewritten as a system of Helmholtz problems and algebraic equations for computing $(\pN_m, \vN_m)$.
\subsection{A multiharmonic cut-off algorithm}
 Below we skip writing the dependencies of Fourier coefficients on $x$ for readability. 

\begin{proposition}\label{prop: multi} 
	Let the assumptions of Theorem~\ref{thm: existence West-volume} hold with the acoustic source term assumed to have the form $h=g_{tt}$, where $g$ is given in \eqref{def g}.  The problem in 	\eqref{semi-discrete} yields the following coupled system for computing $(\pN_m, \vN_m)$
for $N \geq 0$ and $0 \leq m \leq N$: \small
\begin{align}
		m = 0:& \begin{cases}
			\begin{aligned}
				\text{(i)} & \quad p_0^N =0, \\
				\text{(ii)} & \quad \omega_0^2 v_0^N = \ca \left( v_0^N \right)^2 + \sum_{j=1}^N \left( \frac{\ca}{2} - \frac{\cb}{2} \omega^2 j^2 \right) \left| v_j^N \right|^2, \\
			\end{aligned} 
		\end{cases} \\[0.5em]
		m= 1:&  \begin{cases}
			\begin{aligned}
				\text{(i)} & \quad -  p_1^N - \frac{c^2 + \imath b \omega }{\omega^2} \Delta p_1^N+ c^2 n_0\rho_0  v_1^N = - h_1^N - \sum_{k=3:2}^{2N-1}  \eta \overline{p_{\frac{k-1}{2}}^N} p_{\frac{k+1}{2}}^N,  \\[0.3em]
				\text{(ii)} & \quad \left( - \omega^2 + \imath \delta \omega_0 \omega + \omega_0^2  \right) v_1^N + \mu p_1^N\\
				& \quad = (\ca -\cb \omega^2) v_0^N v_1^N + \sum_{k=1:2}^{2N-1} \left( \ca - \cb \omega ^2 \frac{k^2 + 3}{4}\right) \overline{v_{\frac{k-1}{2}}^N} v_{\frac{k+1}{2}}^N,  \\
			\end{aligned}
		\end{cases}  \\[0.5em]
		m = \left\lbrace 2, \ldots, N \right\rbrace:& \begin{cases}
			\begin{aligned}
				\text{(i)} & \quad - p_m^N- \frac{ c^2 + \imath m b \omega  }{m^2 \omega^2} \Delta p_m^N + c^2 \rho_0 n_0 v_m^N\\
				& \quad = - h_m^N -\eta \left(  \sum_{l=1}^{m-1}  p_l^N p_{m-l}^N- 2 \sum_{k=m+2:2}^{2N-m}  \overline{p_{\frac{k-m}{2}}^N} p_{\frac{k+m}{2}}^N  \right), \\[0.5em]
				\text{(ii)}&  \quad  \left(- \omega^2m^2 + \imath \delta \omega_0 \omega m+\omega_0^2\right)   v_m^N + \mu p_m^N, \\
				& \quad = \sum_{l=0}^m \left( \frac{\ca}{2} - \frac{\cb \omega^2}{2} (m-l)(2m-l) \right)  v_l^N v_{m-l}^N \\
				& \quad \quad + \sum_{k=m:2}^{2N-m} \left( \ca - \cb \omega ^2 \frac{k^2 + 3m^2}{4}\right) \overline{v_{\frac{k-m}{2}}^N} v_{\frac{k+m}{2}}^N. \\
			\end{aligned}
		\end{cases}
\end{align}
\normalsize
Additionally, each of the individual functions $p_m^N$ for $m \in \left\lbrace 1, \ldots, N \right\rbrace$ satisfies the following boundary conditions:
\begin{equation}\label{eq: bc}
	(\imath \omega m \beta + \gamma ) p_m^N + \nabla p_m^N \cdot \vecn = 0 \qquad \text{ on }\, \partial \Omega.
\end{equation}
\end{proposition}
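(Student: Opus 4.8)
The plan is to insert the real multiharmonic Ansatz \eqref{finite_approx} for both $\pN$ and $\vN$ into every line of \eqref{semi-discrete} and then match Fourier modes: two elements of $\XN$ agree precisely when the coefficients of $\exp(\imath m\omega t)$ coincide for each $0\le m\le N$, which one isolates by multiplying by $\exp(-\imath m\omega t)$ and integrating over $(0,T)$, using the orthogonality relation $\tfrac1T\int_0^T\exp(\imath(k-m)\omega t)\dt=\delta_{km}$. All the linear differential operators act diagonally on the harmonics, with $\partial_t$ becoming multiplication by $\imath m\omega$; thus the mode-$m$ part of $\pttN$ is $-m^2\omega^2 p_m^N$, that of $\ptN$ is $\imath m\omega\,p_m^N$, and $\Delta$ acts on the spatial coefficient. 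For $m\ge1$, collecting the mode-$m$ wave equation and dividing by $-m^2\omega^2$ moves the elliptic part into the Helmholtz form appearing in case (i), while the identical procedure applied to the ODE produces the algebraic relations in case (ii). Because $h=g_{tt}$ with $g$ as in \eqref{def g}, the mode-$m$ coefficient of $\projN h$ carries the factor $-m^2\omega^2$ that is cancelled upon division, leaving exactly the source contribution $-h_m^N$. Projecting the Robin condition harmonic by harmonic gives \eqref{eq: bc}.

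I would dispose of the mode $m=0$ first, since it is degenerate. Every source on the right-hand side of the wave equation carries an overall $\partial_{tt}$ (the term $\eta((\pN)^2)_{tt}$, the bubble term, and $h=g_{tt}$), so its constant-in-time component vanishes and the mode-$0$ equation reduces to $-c^2\Delta p_0^N=0$ together with the projected Robin condition $\gamma p_0^N+\nabla p_0^N\cdot\vecn=0$; a standard integration-by-parts (energy) argument using $c,\gamma>0$ then forces $p_0^N=0$, which is (i). With $p_0^N=0$ inserted, the mode-$0$ projection of the ODE collapses to $\omega_0^2 v_0^N=$ (the constant Fourier component of the nonlinearity), and evaluating that component of $\ca(\vN)^2+\cb(2\vN\vttN+(\vtN)^2)$ directly yields the quadratic expression in (ii).

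The genuinely delicate part is the treatment of the quadratic nonlinearities, and here the efficient device is to pass to the two-sided representation $u^N=\sum_{|m|\le N}\hat u_m\exp(\imath m\omega t)$ with $\hat u_m=\tfrac12 u_m^N$ for $m\ge1$, $\hat u_0=u_0^N$, and $\hat u_{-m}=\overline{\hat u_m}$. A product of two such fields is the discrete convolution whose mode-$m$ coefficient is $\sum_l\hat u_l\hat w_{m-l}$; extracting $m\ge0$ and folding the negative indices back through conjugate symmetry splits each product into a sum-frequency contribution indexed by $0\le l\le m$ (the origin of the $\sum_{l=0}^m v_l^N v_{m-l}^N$ terms) and a difference-frequency contribution (the origin of the $\sum_k\overline{v_{(k-m)/2}^N}v_{(k+m)/2}^N$ terms). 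The step-two index sets and the upper limits $2N-m$ encode exactly the requirement that both shifted indices lie in $\{0,\dots,N\}$ and share the parity of $m$, the latter enforced by $\projN$ truncating harmonics above $N$. The frequency weights are where the time derivatives sitting inside the ODE nonlinearities enter: writing $2\vN\vttN$ and $(\vtN)^2$ in Fourier form attaches to the summand $v_l^N v_{m-l}^N$ the factors $2(-(m-l)^2\omega^2)$ and $-l(m-l)\omega^2$, which combine with the $\ca$ from $(\vN)^2$ into the coefficient $\tfrac\ca2-\tfrac{\cb}2\omega^2(m-l)(2m-l)$, and the analogous computation on the difference-frequency side produces $\ca-\cb\omega^2\tfrac{k^2+3m^2}4$; for the Westervelt term the outer $\partial_{tt}$ instead contributes a single global $-m^2\omega^2$ that cancels against the division, so the right-hand side there is a plain convolution of the $p_m^N$. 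I expect this combinatorial bookkeeping — pairing indices correctly, tracking the parity and truncation ranges, and carrying the Ansatz prefactors through the frequency-weighted convolutions — to be the only real difficulty; the remaining manipulations are linear and diagonal, and the separate $m=1$ equation is simply the $m\ge2$ pattern with the interior sum $\sum_{l=1}^{m-1}$ empty.
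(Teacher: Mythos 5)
Your proposal is correct and takes essentially the same route as the paper's proof: insert the Ansatz \eqref{finite_approx}, identify the coefficients of $\exp(\imath m \omega t)$ (the paper invokes linear independence where you use orthogonality), dispose of $m=0$ via $\Delta p_0^N = 0$ together with the Robin condition and $\gamma>0$, and divide by $m^2\omega^2$ for $m \geq 1$. The only difference is one of packaging: where the paper imports the sum-/difference-frequency projection formulas from \cite[Lemma 3.1]{kaltenbacher2021periodic}, you re-derive them self-containedly via the two-sided spectrum with $\hat{u}_{-m} = \overline{\hat{u}_m}$, and your frequency weights — $-\omega^2\bigl(2(m-l)^2 + l(m-l)\bigr) = -\omega^2(m-l)(2m-l)$ on the sum-frequency side and $j^2 + jm + m^2 = \tfrac{k^2+3m^2}{4}$ with $k = 2j+m$ on the difference-frequency side — check out against the stated system, including the parity and truncation ranges of the $k$-sums.
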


\begin{proof} We begin by considering the derivation of the multiharmonic algorithm for the Westervelt's equation and the ODE separately. For the PDE, one can follow the steps  outlined in~\cite{kaltenbacher2021periodic}, where the de-coupled Westervelt equation is considered, by setting the right-hand side to $g = h + c^2 \rho_0 n_0 v$. Projections onto $\XN$ of product terms appearing on the right-hand side of \eqref{semi-discrete} can be expressed using~\cite[Lemma 3.1]{kaltenbacher2021periodic}.\\ 
\indent For the ODE in \eqref{semi-discrete}, the Ansatz given in \eqref{finite_approx}, together with \cite[Lemma 3.1]{kaltenbacher2021periodic}, yields 
\small
\begin{align}
0 = & \real \left\lbrace- \omega^2 \sum_{m=0}^N v_m^N m^2 \exp(\imath m \omega t) + \delta \omega_0 \imath \omega \sum_{m=0}^N v_m^Nm \exp( \imath m \omega t) + \omega_0^2 \sum_{m=0}^N  v_m^N \exp(\imath m \omega t) \right. \\
& \left. + \mu \sum_{m=0}^N p_m^N \exp(\imath m \omega t)- \frac{\ca}{2} \left( (v_0^N)^2 + \sum_{j=0}^N \left| v_j^N\right|^2 \right. \right. \\
& \left. \left. + \sum_{m=1}^N \left[ \sum_{l=0}^m v_l^N v_{m-l}^N + 2 \sum_{k=m:2}^{2N-m} \overline{v_{\frac{k-m}{2}}^N} v_{\frac{k+m}{2}}^N \right] \exp(\imath m \omega t) \right) + \cb \frac{\omega^2}{2}\left( \sum_{j=0}^N j^2 \left| v_j^N\right|^2 \right. \right. \\
& \left. \left. +  \sum_{m=1}^N \left[ \sum_{l=0}^m  (m-l)(2m-l)  v_l^N v_{m-l}^N + \sum_{k=m:2}^{2N-m}  \frac{k^2 +3m^2}{2}  \overline{v_{\frac{k-m}{2}}^N} v_{\frac{k+m}{2}}^N \right] \exp(\imath m \omega t) \right) \right\rbrace.
\end{align}
\normalsize
From here, we have
\small 
\begin{align}
	0 
	= & \real \left\lbrace \sum_{m=0}^N \left[ \left( - \omega^2  m^2 + \delta \omega_0 \imath \omega m + \omega_0^2 \right) v_m^N + \mu p_m^N \right] \exp(\imath m \omega t) \right. \\
	& \left. - \frac{\ca}{2} (v_0^N)^2 +   \sum_{j=0}^N \left( - \frac{\ca}{2} + \cb \frac{\omega^2}{2} j^2 \right)\left| v_j^N\right|^2  + \sum_{m=1}^N \left[  \sum_{l=0}^m \left( -\frac{\ca}{2} + \cb \frac{\omega^2}{2} (m-l)(2m-l) \right) v_l^N v_{m-l}^N \right. \right. \\
	& \left. \left. + \sum_{k=m:2}^{2N-m} \left( -\ca + \cb \omega^2 \frac{k^2 + 3m^2}{4}\right)  \overline{v_{\frac{k-m}{2}}^N} v_{\frac{k+m}{2}}^N \right]  \exp(\imath m \omega t) \right\rbrace. 
\end{align}
\normalsize
Combining our derivations above and using linear independence of the functions $ t \mapsto \exp( \imath m \omega t)$ yields a coupled nonlinear system for the functions $\left\lbrace p_0^N, v_0^N, \cdots, p_N^N, v_N^N \right\rbrace$. Additionally, each of the individual functions $p_m^N$ must satisfy the boundary conditions in \eqref{eq: bc}.  The 0-th equation is given by $\Delta p^N_0 = 0$, which together with the 0-th boundary condition (and the fact that $\gamma>0$)  implies that $p_0^N$ vanishes. By dividing the resulting PDEs by $m^2\omega^2$ for $m \geq 1$, we arrive at the claimed coupled system. \qed
\end{proof}

We observe that without the sums over $k$ on the right-hand side of the system in Proposition~\ref{prop: multi}, the system would be triangular and could be solved by substitution. One way of obtaining a triangular form is by linearizing the right-hand side as we show next. 
\subsection{A linearized multiharmonic cut-off algorithm}  \label{sec: linearized algorithm}
Next, we propose a linearized multilevel method. That is, for $N \geq 1$, we consider the sequence of the following linearized equations where the quadratic terms are approximated by taking into account $N-1$ harmonics:
\begin{equation} 	\label{system_lin}
	\left\{	\begin{aligned}
		\ &\pttN - c^2 \Delta \pN - b \Delta \ptN &&\\ 
		=&\, \eta\,\projN \left[ ((\pNone)^2)_{tt}\right]
		+c^2 \rho_0 n_0 \vttN + \projN h  &&\text{in } \Omega \times  (0,T),\\[1mm]
		&\beta \ptN + \gamma \pN + \nabla \pN \cdot \vecn = 0 &&\text{on }  \partial\Omega \times (0,T),\\[1mm]
		&	\vttN+ \delta \omega_0 \vtN + \omega_0^2 \vN &&\\
		=&\,  -  \mu \pN +  \,\projN \left[\ca (\vNone)^2 
		+ \cb \left( 2 \vNone \vttNone + (\vtNone)^2 \right) \right]  &&\text{in }   \Omega \times (0,T),
	\end{aligned} \right.
\end{equation}
where for $u^N \in X_N$, we formally define $u_m^N = 0$ for all $m>N$. To start the algorithm, we set $p^0 =0$ and $v^0 = 0$. We next show that this problem can be seen as a triangular system of Helmholtz problems and algebraic equations that can be solved by successive substitution. 
\begin{proposition} \label{prop: multilevel_linearized}
	Let the assumptions of Theorem~\ref{thm: existence West-volume} hold with the acoustic source term assumed to have the form $h=g_{tt}$, where $g$ is given in \eqref{def g}. Then, an equivalent formulation of the problem in~\eqref{system_lin} with $p^0=v^0=0$ is given by the following coupled system: 
\small 
\begin{align}
	m = 0:& \begin{cases}
		\begin{aligned}
			\text{(i)} & \quad p_0^N =0, \\
			\text{(ii)} 	& \quad \omega_0^2 v_0^N=  \frac{\ca}{2} \left( v_0^{N-1} \right)^2  + \sum_{j=0}^{N-1} \left(  \frac{\ca}{2} - \frac{\cb \omega^2}{2}j^2 \right)  \left| v_j^{N-1} \right|^2,\\
		\end{aligned} 
	\end{cases} \\[0.5em]
	m= 1:&  \begin{cases}
		\begin{aligned}
			\text{(i)} & \quad  -  p_1^N - \frac{ c^2 + \imath b \omega }{\omega^2} \Delta p_1^N+ c^2 \rho_0  n_0  v_1^{N}  = -h_1^N - \eta \sum_{k=3:2}^{2N-3}  \overline{p_{\frac{k-1}{2}}^{N-1}} p_{\frac{k+1}{2}}^{N-1}, \\[0.3em]
			\text{(ii)} & \quad  \frac{1}{\alpha_1}v_1^N + \mu p_1^{N} = \left( \ca - \cb \omega^2\right) v_0^{N-1}v_1^{N-1} 
			\\
			& \hspace{2.5cm} + \sum_{k=1:2}^{2N-3} \left( \ca - \cb \omega ^2 \frac{k^2 + 3}{4}\right) \overline{v_{\frac{k-1}{2}}^{N-1}} v_{\frac{k+1}{2}}^{N-1}, \\
		\end{aligned}
	\end{cases}  \\[0.5em]
	m = \left\lbrace 2, \ldots, N \right\rbrace:& \begin{cases}
		\begin{aligned}
			\text{(i)} & \quad - p_m^N- \frac{\left( c^2 + \imath m b \omega  \right) }{ \omega^2 m^2}\Delta p_m^N + c^2 \rho_0 n_0  v_m^{N}\\
			& \quad = -  h_m^N  - \frac{ \eta}{2} \left( \sum_{l=1}^{m-1}  p_l^{N-1} p_{m-l}^{N-1} -2 \sum_{k = m+2:2}^{2(N-1)-m}  \overline{p_{\frac{k-m}{2}}^{N-1}}  p_{\frac{k+m}{2}}^{N-1}  \right),  \\[0.3em]
			\text{(ii)}& \quad  \frac{1}{\alpha_m}  v_m^N + \mu p_m^{N} = \sum_{l=0}^m \left(\frac{\ca}{2} - \frac{\cb \omega^2}{2} (m-l)(2m-l) \right)  v_l^{N-1} v_{m-l}^{N-1} \\
			& \hspace{2.5cm} + \sum_{k=m:2}^{2(N-1)-m} \left( \ca \textcolor{black}{-} \frac{\cb \omega^2 (k^2 + 3 m^2)}{4} \right)  \overline{v_{\frac{k-m}{2}}^{N-1}} v_{\frac{k+m}{2}}^{N-1}
		\end{aligned}
	\end{cases}
\end{align}
\normalsize 
for $N \geq 1$, where for $m \geq 1$, $\alpha_m = \left(- m^2\omega^2 + \imath m \delta \omega_0 \omega+\omega_0^2\right)^{-1}$.
Additionally, each of the individual functions $p_m^N$, where $m \in \left\lbrace 1, \ldots, N \right\rbrace$, should satisfy boundary conditions \eqref{eq: bc}.
\end{proposition}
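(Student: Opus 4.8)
The plan is to follow the same route as in the proof of Proposition~\ref{prop: multi}, with the single structural change that the quadratic nonlinearities are now frozen at the previous iterate. Concretely, I would insert the multiharmonic Ansatz \eqref{finite_approx} for $\pN$ and $\vN$ into the linearized system \eqref{system_lin}, keeping in mind that on the right-hand sides the products $((\pNone)^2)_{tt}$, $\ca(\vNone)^2$, $2\cb\vNone\vttNone$ and $\cb(\vtNone)^2$ involve only the $(N-1)$-iterate coefficients $\{p_j^{N-1}, v_j^{N-1}\}$, whereas the linear terms -- including the coupling $c^2\rho_0 n_0\vttN$ in the wave equation and $-\mu\pN$ in the ODE -- retain the current coefficients $\{p_m^N, v_m^N\}$. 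This asymmetry is exactly what makes the resulting system triangular: the only $N$-level coupling left at each frequency $m$ is the $2\times 2$ block relating $p_m^N$ and $v_m^N$, while every nonlinear contribution is a known datum.

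For the Fourier bookkeeping I would reuse \cite[Lemma~3.1]{kaltenbacher2021periodic} to project the products onto $\XN$, treating the wave equation exactly as in \cite{kaltenbacher2021periodic} with combined source $g = \projN h + c^2\rho_0 n_0\vttN$ and nonlinearity built from $\pNone$, and expanding the ODE nonlinearities termwise as in the long display in the proof of Proposition~\ref{prop: multi}. The one genuine difference is in the summation ranges: since $\pNone$ and $\vNone$ carry harmonics only up to order $N-1$, their products have frequency content up to $2(N-1)$ rather than $2N$, so the aliasing ($k$-)sums truncate at $2(N-1)-m$ (and at $2N-3$ in the case $m=1$) instead of $2N-m$ (respectively $2N-1$). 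Matching the combinatorial weights $\tfrac{\ca}{2}-\tfrac{\cb\omega^2}{2}(m-l)(2m-l)$ and $\ca-\tfrac{\cb\omega^2}{4}(k^2+3m^2)$ against those already derived in Proposition~\ref{prop: multi} then reproduces the claimed coefficients verbatim.

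Separating frequencies by the linear independence of $\{t\mapsto\exp(\imath m\omega t)\}_{m=0}^N$ yields one equation per mode, and since this separation is an equivalence the whole derivation is reversible, giving the claimed equivalent reformulation rather than merely a one-sided implication. For $m=0$ the wave equation collapses to $\Delta p_0^N = 0$, which together with the $m=0$ boundary condition in \eqref{eq: bc} and $\gamma>0$ forces $p_0^N=0$; the ODE gives the stated algebraic relation for $v_0^N$, where I would note that $v_0^{N-1}$ is real so that $|v_0^{N-1}|^2=(v_0^{N-1})^2$ and the decision to keep the $j=0$ summand in the sum (rather than absorbing it into the explicit $\tfrac{\ca}{2}(v_0^{N-1})^2$ term) is merely cosmetic. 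For $m\ge1$ I would divide the wave equation by $m^2\omega^2$ to reach the Helmholtz form and collect the linear ODE part into $\alpha_m^{-1}=-m^2\omega^2+\imath m\delta\omega_0\omega+\omega_0^2$, recovering the two boxed equations together with the boundary condition \eqref{eq: bc}.

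The only place demanding care -- and the expected main obstacle -- is the precise tracking of the truncated aliasing sums: one must verify that freezing the nonlinearity at level $N-1$ shifts every upper summation limit consistently (e.g.\ $2N-m\mapsto 2(N-1)-m$) and that no genuinely $N$-level product survives the projection $\projN$. Since the underlying trigonometric identities are already packaged in \cite[Lemma~3.1]{kaltenbacher2021periodic} and the full nonlinear counterpart is established in Proposition~\ref{prop: multi}, this is a direct, if bookkeeping-heavy, adaptation rather than a new argument, and the triangular solvability by successive substitution follows immediately once the index shifts are confirmed.
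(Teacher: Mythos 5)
Your proposal is correct and follows essentially the same route as the paper, whose proof consists precisely of the remark that the statement follows analogously to Proposition~\ref{prop: multi} by inserting the Ansatz \eqref{finite_approx} into \eqref{system_lin} and using \cite[Lemma 3.1]{kaltenbacher2021periodic} for the projections of products. Your explicit tracking of the truncated aliasing ranges (upper limits $2(N-1)-m$, resp.\ $2N-3$, because $p^{N-1}, v^{N-1}$ carry harmonics only up to order $N-1$) and of the $m=0$ case is exactly the bookkeeping the paper leaves implicit.
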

\begin{proof}
The statement follows analogously to before by applying the Ansatz given in \eqref{finite_approx} and making use of the expressions provided in \cite[Lemma 3.1]{kaltenbacher2021periodic} for projections onto $\XN$ of products of two functions in $\XN$.
\end{proof}

Looking at the system in Proposition~\ref{prop: multilevel_linearized}, we see that we can express the volume harmonics as
\begin{equation} \label{volume harmonics}
 \vN_m = -\alpha_m \mu \pN_m + \alpha_m  f_v^{N-1} \text{ for } \ m \geq 1,
 \end{equation}
 where
\[
f^{N-1}_v = \sum_{l=0}^m \left(\frac{\ca}{2} - \frac{\cb \omega^2}{2} (m-l)(2m-l) \right)  v_l^{N-1} v_{m-l}^{N-1}+ \sum_{k=m:2}^{2(N-1)-m} \left( \ca \textcolor{black}{-} \frac{\cb \omega^2 (k^2 + 3 m^2)}{4} \right)  \overline{v_{\frac{k-m}{2}}^{N-1}} v_{\frac{k+m}{2}}^{N-1}.
\]
 By substituting these values into the equations for $\pN_m$, the coupled system in Proposition~\ref{prop: multilevel_linearized} can be rewritten as a system of \eqref{volume harmonics} and Helmholtz equations given by
	\begin{equation} \label{Helmholtz problems}
	\left\{	\begin{aligned}
		&-\left(1+\imath \omega\frac{m b}{c^2} \right)\Delta \pN_m - (k^2+\fraka) \pN_m +\imath \frakb\pN_m = -k^2 h_m^N - f_{p,v}^{N-1} \quad \text{ in }\, \Omega,\ \text{with } k= \frac{m \omega}{c},\\
		&(\imath \omega m \beta + \gamma ) \pN_m + \nabla \pN_m \cdot \vecn = 0 \quad \text{ on }\, \partial \Omega,
	\end{aligned} \right.
\end{equation}
where the source is
\begin{equation} \label{def fpvNone}
f_{p,v}^{N-1} =  \alpha_m \omega^2 m^2 \rho_0 n_0 f^{N-1}_v  + k^2 \frac{ \eta}{2} f^{N-1}_p
\end{equation}
with 
\[
f^{N-1}_p = \sum_{l=1}^{m-1}  p_l^{N-1} p_{m-l}^{N-1} -2 \sum_{k = m+2:2}^{2(N-1)-m}  \overline{p_{\frac{k-m}{2}}^{N-1}}  p_{\frac{k+m}{2}}^{N-1}.
\]
In \eqref{Helmholtz problems}, we have
\begin{equation} \label{deff fraka frakb}
\fraka =  \mu \rho_0 n_0 \frac{m^2 \omega^2 (\omega_0^2 - m^2 \omega^2)}{(\omega_0^2-m^2\omega^2)^2 + (m \delta \omega_0 \omega)^2}\ \text{  and  } \ \frakb = \mu \rho_0 n_0 \frac{m^3 \omega^3 \delta \omega_0 }{(\omega_0^2-m^2\omega^2)^2 + (m \delta \omega_0 \omega)^2}.
\end{equation}
Note that $\frakb>0$, while the sign of $\fraka$ depends on the relation between $\omega_0$ and $m \omega$. The form of the Helmholtz equation in \eqref{Helmholtz problems} reveals more clearly the influence of microbubbles on the wave propagation, through the dissipation signaled by the $i \frakb \pN_m$ term and modification of the wave number via the $-(k^2+\fraka) \pN_m$ term. \qed
\normalsize

\def\fNone{f^{N-1}} 
\def\fraka{\mathfrak{a}}
\def\frakb{\mathfrak{b}}
\section{Convergence of the linearized multiharmonic algorithm for real fields} \label{sec: convergence}
We next wish to establish convergence of the iterative scheme in \eqref{system_lin} as $N \rightarrow \infty$ in a suitable norm. We can mimic the proof of Theorem~\ref{thm: existence West-volume} to show by induction that for each $N \geq 1$, problem \eqref{system_lin} has a unique solution $(\pN, \vN) \in \XN \times \XN$. 
For fixed $N \geq 1$, the existence and uniqueness of $\pN$ follow by noting that, thanks to Proposition~\ref{prop: multilevel_linearized}, harmonics $\pN_m$ are obtained as solutions of the Helmholtz problems in \eqref{Helmholtz problems}.
	 Furthermore, an analogous testing procedure to the one in Theorem~\ref{thm: existence West-volume} applied on \eqref{system_lin} yields
	\[
	\|\pN\|_{\Xp} \leq \rp, \quad \|\vN\|_{\Xv} \leq \rv,
	\]
	provided $\rp$, $\rv$, and $\|\mu\|_{\Linf}$ are sufficiently small. We omit these details here and focus on establishing convergence. 
 The convergence will be shown in the norms of the spaces $\Xplowzero$ and $\Xvlowzero$, where
  \begin{equation}
  	\begin{aligned}
  	\Xplow^\ell =&\, H^{\ell+1}(0,T; H^1(\Omega)) \cap H^{\ell+2}(0,T; L^2(\Omega)) \cap H^{\ell+2}(0,T; L^2(\partial \Omega)),\\
  	 \Xvlow^\ell =&\, H^{\ell+2}(0,T; \Ltwo)
  	\end{aligned}
  \end{equation}
  for $\ell \geq 0$. To make writing more compact, we introduce the short-hand norm notation
  \begin{equation}
|\!|\!|(p,v)|\!|\!|_{\Xplowell \times \Xvlowell} \coloneqq	\|p\|_{\Xplowell}+\|v\|_{\Xvlowell}
  \end{equation}
  for $(p,v) \in \Xplowell\times \Xvlowell$, and we also adopt the operator notation
  \begin{equation}
  	\begin{aligned}
  		<\calL_1 \pN, \phiN>  \coloneqq \begin{multlined}[t]
  			\intTO \left(\pttN \phiN + c^2 \nabla \pN \cdot \nabla \phiN+ b \nabla \ptN \cdot \nabla \phiN \right)\dxs \\
  			+ \intT \int_{\partial \Omega} (c^2(\beta\ptN+\gamma \pN)+b(\beta \pttN+\gamma \ptN))\phiN \dGs
  		\end{multlined}
  	\end{aligned}
  \end{equation}
  and
  \begin{equation}
  	<\calL_2 \vN, \phiN>  \coloneqq \intTO \left(\vttN + \delta \omega_0 \vtN + \omega_0^2 \vN \right) \phiN \dxs, \quad \phiN \in \XN.
  \end{equation}
  Then the algorithm can be restated as follows. Given the previous iterate  $(\pNone,  \vNone) \in \XN \times \XN$, we compute $(\pN, \vN)$ for $N \geq 1$ as the solution of
  	\begin{subequations}\label{iterative method}
  		\begin{equation}	\label{eq pN iterative}
  			\begin{aligned}
  				<\calL_1 \pN, \phiN>	=&\, \intTO \left({\eta} ((\pNone)^2)_{tt} + c^2 \rho_0 n_0 \vttN +h\right) \phiN \dxs,
  				\end{aligned}
  		\end{equation}	
  		and
  			\begin{equation} \label{eq vN iterative}
  					\begin{aligned}
  						<\calL_2 \vN, \phiN> 	=&\,\begin{multlined}[t] \intTO \Bigl\{-\mu\pN + \ca (\vNone)^2  \\  \hspace*{2cm}+ \cb \left( 2 v^{N-1} \vttNone + (\vtNone)^2 \right) \Bigr\} \phiN \dxs,
  								\end{multlined}
  						\end{aligned} 
  				\end{equation}
  		\end{subequations}	
  		for all $\phiN \in \XN$. \\
  \indent In the convergence proof, we will involve the truncated Fourier series of a continuous-in-time function $u$ given by 
  \begin{equation}
  	\begin{aligned}
  		\tilde{u}^N(x,t)= \sum_{m=0}^N \left [u^{c}_m (x) \cos(m \omega t)+ u_m^{s}(x)\sin(m \omega t)\right ] \in \XN,  \quad N \geq 0,
  	\end{aligned}
  \end{equation}
  with the coefficients 
  \begin{equation}
  	\begin{aligned}
  		u_m^{c}(x) = \frac{2}{T} \int_0^T u(x,t) \cos(m \omega t) \dt, \quad u_m^{s}(x) = \frac{2}{T} \int_0^T u(x,t) \sin(m \omega t) \dt.
  	\end{aligned}
  \end{equation}
By integrating the time-periodic Westervelt equation and acoustic boundary conditions in \eqref{ibvp:westervelt volume periodic} over $(0,T)$, we can conclude that $\int_0^T p(x,t) \dt =0$ in $\Omega$.  Thus  $\tpzero=0$, which is reflected by having $\pN_0=0$ in the multiharmonic algorithms. \\
\indent It can be shown analogously to~\cite[Lemma 12]{bachinger2005numerical} that the following error bound holds for $\ell \geq 1$:
\begin{equation} \label{conv Fourier}
	\begin{aligned}
|\!|\!|(p-\tpN, v-\tvN)|\!|\!|_{(\Xplowzero \cap H^2(\Hone)) \times \Xvlowzero}	
	 \leq&\, C N^{-\ell} (\|p\|_{Y_p^{\ell} \cap H^{\ell+2}(\Hone)}+\|v\|_{Y_v^{\ell}}),
	\end{aligned}
\end{equation}
which we can exploit to characterize the error of the scheme in the next statement. We include the proof of \eqref{conv Fourier} in Appendix~\ref{Appendix: Cutoff est} for completeness.

 \begin{theorem}
 	\label{thm: convergence}
 Let the assumptions of Theorem~\ref{thm: existence West-volume} hold  with the acoustic source term $h=g_{tt}$, where $g$ is given in \eqref{def g}. Assume additionally that
 \begin{equation}
 	\begin{aligned}
 		(p,v) \in \left(\Xp \cap \Xplow^\ell \cap H^{\ell+2}(0,T; \Hone) \right) \times \left(\Xv \cap \Xvlow^\ell\right), \quad \ell \geq 1.
 	\end{aligned}
 \end{equation}
 Let $p^0=v^0=0$ and assume that $(\pN, \vN)$ are computed using \eqref{iterative method} for $N \geq 1$.  Then, provided $\rp$, $\rv$, and $\|\mu\|_{\Linf}$ are sufficiently small, there exists $q=q(\rp, \rv)<1$, such that
 \begin{equation} \label{error est pN vN}
 	\begin{aligned}
 		&\pvnorm{(p-\pN, v-\vN)} \\
 	\leq&\, \begin{multlined}[t] 	q \pvnorm{(p-\pNone, v-\vNone)}
 		+  C N^{-\ell} (\|p\|_{\Xplowell \cap H^{\ell+2}(\Hone)}+\|v\|_{\Xvlowell}),
 		\end{multlined}
 	\end{aligned}
 \end{equation}
 where $C>0$ does not depend on $N$.
 \end{theorem}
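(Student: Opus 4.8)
The plan is to compare the iterate $(\pN,\vN)$ with the truncated Fourier series $(\tpN,\tvN)$ of the exact solution and split the error by the triangle inequality,
\[
\pvnorm{(p-\pN,\, v-\vN)} \leq \pvnorm{(p-\tpN,\, v-\tvN)} + \pvnorm{(\tpN-\pN,\, \tvN-\vN)}.
\]
By the cut-off estimate \eqref{conv Fourier}, whose left-hand side is measured in the stronger norm $\pvnormcap{\cdot}$, the first summand is already bounded by $C N^{-\ell}(\|p\|_{\Xplowell\cap H^{\ell+2}(\Hone)}+\|v\|_{\Xvlowell})$, which is the desired approximation contribution. It then remains to show that the discrete error $(\errpN,\errvN):=(\tpN-\pN,\tvN-\vN)\in\XN\times\XN$ contracts the previous-iterate error.

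First I would derive the equations governing $(\errpN,\errvN)$. Since all temporal operators in $\calL_1,\calL_2$ act diagonally on the harmonics $\exp(\imath m\omega t)$, testing the continuous system \eqref{ibvp:westervelt volume periodic} against any $\phiN\in\XN$ only detects the truncations, giving the Galerkin orthogonality $<\calL_1 p,\phiN>=<\calL_1\tpN,\phiN>$ and $<\calL_2 v,\phiN>=<\calL_2\tvN,\phiN>$. Subtracting the scheme \eqref{eq pN iterative}--\eqref{eq vN iterative} (the $h$-contributions cancel because $\phiN\in\XN$) yields, for all $\phiN\in\XN$,
\[
\begin{aligned}
<\calL_1 \errpN, \phiN> &= \intTO \bigl[\eta\bigl((p^2)_{tt} - ((\pNone)^2)_{tt}\bigr) + c^2\rho_0 n_0\,(v_{tt}-\vttN)\bigr]\phiN \dxs, \\
<\calL_2 \errvN, \phiN> &= \intTO \bigl[-\mu(p-\pN) + \ca\,(v^2-(\vNone)^2) + \cb\,(\ldots)\bigr]\phiN \dxs.
\end{aligned}
\]
The right-hand sides are arranged so that the acoustic self-nonlinearity factors as $(p-\pNone)(p+\pNone)$ and each volume nonlinearity as a product containing $v-\vNone$; hence the previous-iterate errors enter linearly, whereas the coupling terms $v_{tt}-\vttN$ and $p-\pN$ carry the current-iterate errors scaled by $\|n_0\|_{\Linf}$, resp.\ $\|\mu\|_{\Linf}$.

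Next I would establish discrete stability estimates for $\calL_1$ and $\calL_2$ in the low-order norms $\Xplowzero$ and $\Xvlowzero$, the lower-regularity analogues of Proposition~\ref{prop: periodic lin wave} and Lemma~\ref{lemma: ode}. Working harmonic-by-harmonic (each $\errpN_m$ solves a Helmholtz problem as in \eqref{Helmholtz problems}, each $\errvN_m$ an algebraic relation) and summing with Parseval, these yield $\|\errpN\|_{\Xplowzero}\lesssim\|F_p\|_{L^2(L^2)}$ and $\|\errvN\|_{\Xvlowzero}\lesssim\|F_v\|_{L^2(L^2)}$ for the respective right-hand sides. Bounding $F_p,F_v$ by Hölder's inequality, the embeddings of $\Xp$ and $\Xplowzero$, and the a priori bounds $\|p\|_{\Xp},\|\pNone\|_{\Xp}\leq\rp$, $\|v\|_{\Xv},\|\vNone\|_{\Xv}\leq\rv$, and splitting $v-\vN=(v-\tvN)+\errvN$, $p-\pN=(p-\tpN)+\errpN$, leads to
\[
\begin{aligned}
\|\errpN\|_{\Xplowzero} &\lesssim \rp\,\|p-\pNone\|_{\Xplowzero} + \|n_0\|_{\Linf}\bigl(\|v-\tvN\|_{\Xvlowzero} + \|\errvN\|_{\Xvlowzero}\bigr), \\
\|\errvN\|_{\Xvlowzero} &\lesssim \|\mu\|_{\Linf}\bigl(\|p-\tpN\|_{\Xplowzero} + \|\errpN\|_{\Xplowzero}\bigr) + \rv\,\|v-\vNone\|_{\Xvlowzero}.
\end{aligned}
\]
For $\|n_0\|_{\Linf},\|\mu\|_{\Linf}$ small the cross terms $\|\errvN\|_{\Xvlowzero},\|\errpN\|_{\Xplowzero}$ are absorbed on the left; bounding the current truncation errors again by \eqref{conv Fourier} and adding the two inequalities gives $\pvnorm{(\errpN,\errvN)}\leq C\max\{\rp,\rv\}\,\pvnorm{(p-\pNone,v-\vNone)}+C N^{-\ell}(\ldots)$. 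Combined with the triangle inequality and the choice $q:=C\max\{\rp,\rv\}<1$ for $\rp,\rv$ small, this is \eqref{error est pN vN}.

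The hard part will be the stability step in these tailored low-order norms and, above all, controlling the quadratic right-hand sides so that the previous-iterate error enters \emph{only} through $\Xplowzero\times\Xvlowzero$ — the same norm appearing on both sides of \eqref{error est pN vN} — with a coefficient that is genuinely $O(\max\{\rp,\rv\})$ rather than $O(1)$, which is what forces $q<1$. The borderline contributions are the mixed time-space products, such as $(p-\pNone)_t(p+\pNone)_t$ in $L^2(L^2)$, which do not close by crude Sobolev embeddings and instead require parabolic interpolation of $\Xplowzero$ (of $\LinfLtwo\cap L^2(\Hone)$-type); the extra $H^2(\Hone)$-regularity carried along in \eqref{conv Fourier} is precisely what makes these nonlinear estimates and the attendant stability bounds close. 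Finally, the smallness of $\|n_0\|_{\Linf}$ and $\|\mu\|_{\Linf}$ is exactly what decouples the current-iterate cross terms, allowing the two scalar inequalities to be added into a single contraction.
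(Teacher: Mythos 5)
Your proposal is correct and follows the same skeleton as the paper's proof: split the error via the truncated Fourier series $(\tpN,\tvN)$, control the truncation part by the cut-off estimate \eqref{conv Fourier}, derive equations for the discrete error in $\XN\times\XN$, obtain stability bounds in $\Xplowzero\times\Xvlowzero$, and close the contraction by smallness of $\rp$, $\rv$ and absorption of the cross-coupling terms. Within that skeleton you deviate in two technique-level steps, both legitimate. First, you invoke Galerkin orthogonality $<\calL_1(p-\tpN),\phiN>=0$ (and likewise for $\calL_2$), which is indeed valid because the operators have time-independent coefficients and hence act diagonally on harmonics; the paper instead retains the consistency terms $<\calL_1 \errprojNp,\phiN>$ and $<\calL_2\errprojNv,\phiN>$ in \eqref{eq errpN}--\eqref{eq errvN} and estimates them in Appendix~\ref{Appendix: Proof error energy est} — this is precisely where the $\|\errprojNp\|_{\Xplowzero\cap H^2(\Hone)}$ contribution and the extra $H^{\ell+2}(\Hone)$-regularity assumption (via the term $\|\nabla(\errprojNp)_{tt}\|_{L^2(\Ltwo)}$) enter. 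Your shortcut buys a cleaner error equation, at the cost of justifying the term-wise pairing of the tail series, and it requires $\tpN$ to be the genuine $L^2(0,T)$-orthogonal projection. Second, for the stability lemma you work harmonic-by-harmonic on the damped Helmholtz problems and sum with Parseval, whereas the paper performs time-domain energy testing with $\errpN$, $\errptN$, $\errpttN$ (Steps I--III of Appendix~\ref{Appendix: Proof error energy est}); these are mirror images of each other, but be aware that the sesquilinear form is \emph{not} coercive uniformly in $m$ with the weights $(1+(m\omega)^2+(m\omega)^4)$ demanded by $\Xplowzero$ — the term $(\frakb-k^2-\fraka)\|u\|^2_{\Ltwo}$ in Section~\ref{sec:numerics} goes strongly negative for large $m$, as does the boundary coefficient $\gamma-\tfrac{b\beta(m\omega)^2}{c^2}$ — so your frequency-side argument must combine the imaginary part (which yields $m\omega b\,\|\nabla u_m\|^2_{\Ltwo}\lesssim\|F_m\|_{\Ltwo}\|u_m\|_{\Ltwo}$ thanks to $b>0$) with the real part to recover $m^2\|u_m\|_{\Ltwo}\lesssim\|F_m\|_{\Ltwo}$; this is the Fourier analogue of the paper's testing with $\errptN$ and $\errpttN$. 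A final minor difference: you absorb the ODE's $p$-coupling using smallness of $\|\mu\|_{\Linf}$ (permitted by the theorem's hypotheses), while the paper absorbs it through the small weight $\lambda$ in the combined estimate $\eqref{interim est1}+\lambda\cdot\eqref{interim est 2}$; both yield the same $q(\rp,\rv)<1$. Your identification of the mixed products such as $(p-\pNone)_t(p+\pNone)_t$ as the genuinely delicate point, requiring interpolation of $\Xplowzero$ against the $\Xp$-bounds rather than crude embeddings, matches where the paper's one-line passage from \eqref{error est} to the $\rp\|\pNone-p\|_{\Xplowzero}$ bound hides its work.
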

	\begin{proof}
		 The proof follows by splitting the error as follows:
		\begin{equation}
			\begin{aligned}
				p -\pN =&\, (p- \tpN)- (\pN- \tpN) \eqqcolon \errprojNp-\errpN, \\
				 	v -\vN =&\, (v-\tvN) -(\vN-\tvN) \eqqcolon \errprojNv-\errvN,
			\end{aligned}
		\end{equation}
		and then representing $(\errpN, \errvN)$
		as the solution of a suitable semi-discrete wave-volume system. The discrete error $(\errpN, \errvN)$ can be seen as the solution of
\begin{subequations}
 \begin{equation} \label{eq errpN}
	\begin{aligned}
		<\calL_1 \errpN, \phiN> 
		=&\, \begin{multlined}[t]-\intTO \left({\eta} (p^2-(\pNone)^2)_{tt} + c^2 \rho_0 n_0 (\vtt-\vttN)\right) \phiN \dxs \\+ <\calL_1 \errprojNp, \phiN>
			\end{multlined}
	\end{aligned}
\end{equation}	
		 and
	 \begin{equation} \label{eq errvN}
	\begin{aligned}
		& <\calL_2 \errvN, \phiN> \\
		=&\,\begin{multlined}[t] -\intTO \Bigl(-\mu (p-\pN)+\ca (v^2-(\vNone)^2) \\ \hspace*{2cm} + \cb ( 2(v \vtt - v^{N-1} \vttNone) +(\vt^2- (\vtNone)^2))\Bigr) \phiN \dxs\\
			 + <\calL_2 \errprojNv, \phiN>
			 \end{multlined}
	\end{aligned}
\end{equation}
\end{subequations}
	for all $\phiN \in \XN$. Note that in the testing procedure for this problem we can exploit the fact that $\int_0^T \ddt (\cdot) \dt=0$ for time-periodic functions. By testing \eqref{eq errpN} with $\errpN$, $\errptN$, and $\errpttN$, employing Young's and H\"older's inequalities, and combining the resulting estimates, we can derive the following bound: 
		 \begin{equation} \label{error est}
		 	\begin{aligned}
		 		\|\errpN\|^2_{\Xplowzero}
		 		\lesssim&\, \begin{multlined}[t]
		 			\|{-\eta} (p^2-(\pNone)^2)_{tt}- c^2 \rho_0 n_0 (\vtt-\vttN)\|^2_{L^2(\Ltwo)} 	\\	 			+ \|\errprojNp\|^2_{\Xplowzero \cap H^{2}(\Hone)},
		 		\end{multlined} 
		 	\end{aligned}
		 \end{equation}		 
where we have also exploited the equivalence of the norms $\|w\|_{H^1(\Omega)}$ and $\|\nabla w\|_{L^2(\Omega)}+\|w\|_{L^2(\partial \Omega)}$ (see~\cite[Theorem 1.9]{necas2011direct}). The derivation of \eqref{error est} is provided in Appendix~\ref{Appendix: Proof error energy est}.\\
\indent Then by using the rewriting
\begin{equation}
	\begin{aligned}
		((\pNone)^2)_{tt}-(p^2)_{tt} 
		=\, 2(\pNone_t-\pt)(\pNone_t+\pt)+2(\pNone-p)\pNone_{tt}+2 p(\pNone_{tt}-\ptt),
	\end{aligned}
\end{equation}
and the fact that $\|p\|_{\Xp}$, $\|\pN\| \leq \rp$ for all $N \geq 1$, we obtain
	\begin{equation}
	\begin{aligned}
		\|\errpN\|_{\Xplowzero} \lesssim&\,\begin{multlined}[t] \rp \|\pNone-p \|_{\Xplowzero}+ {\| n_0 \|_{\Linf}} \|\vttN-\vtt\|_{\LtwoLtwo}
			+ \|\errprojNp\|_{\Xplowzero \cap H^{2}(\Hone)}.
			\end{multlined}
	\end{aligned}
\end{equation}
 From here via $p-\pN=\errprojNp-\errpN$ and the triangle inequality, we arrive at
	\begin{equation} \label{interim est1}
	\begin{aligned}
		\|p-\pN\|_{\Xplowzero} \lesssim&\,\begin{multlined}[t]  \rp \|\pNone-p \|_{\Xplowzero}+ {\| n_0 \|_{\Linf}} \|\vttN-\vtt\|_{\LtwoLtwo}\\
			+ \|\errprojNp\|_{\Xplowzero \cap H^{2}(\Hone)}.
		\end{multlined}
	\end{aligned}
\end{equation}
Similarly, testing \eqref{eq errvN} with $\errvN$, $\errvtN$, and $\errvttN$ yields, after standard manipulations,
\begin{equation}
	\begin{aligned}
		\|\errvN\|^2_{\Xvlowzero} 
		\lesssim&\, \begin{multlined}[t]
		\|p-\pN \|^2_{L^2(\Ltwo)}+\|v^2-(\vNone)^2\|^2_{L^2(\Ltwo)}\\
		\| - \cb ( 2(v \vtt - v^{N-1} \vttNone) +(\vt^2- (\vtNone)^2))\|^2_{L^2(\Ltwo)}
		+ \|\errprojNv\|^2_{\Xvlowzero}.
		\end{multlined}
	\end{aligned}
\end{equation}
By using the fact that $\|v\|_{\Xv}$, $\|\vN\|_{\Xv} \leq \rv$ for all $N \geq 1$, and the triangle inequality, from \eqref{eq errvN} we then have
\begin{equation} \label{interim est 2}
	\begin{aligned}
		\|v-\vN\|_{\Xvlowzero} 
		\lesssim&\, \begin{multlined}[t]
\|\pN-p\|_{L^2(\Ltwo)}+\rv \|\vNone-v\|_{\Xvlowzero}
+\|\errprojNv\|_{\Xvlowzero}.
		\end{multlined}
	\end{aligned}
\end{equation}
Adding $\lambda \cdot$\eqref{interim est 2} and \eqref{interim est1}  with $\lambda>0$ small enough, so that the term $\lambda \|\pN-p\|_{L^2(\Ltwo)}$ can be absorbed by the left-hand side, and then possibly reducing $\|n_0\|_{\Linf}$ so that the term $\|n_0\|_{\Linf} \|\vttN-\vtt\|_{\LtwoLtwo}$ can be absorbed, yields
\begin{equation} \label{interim}
	\begin{aligned}
		&\|p-\pN\|_{\Xplowzero} + \lambda\|v-\vN\|_{\Xvlowzero}  \\
		 \lesssim&\,\begin{multlined}[t]
		  \rp \|\pNone-p \|_{\Xplowzero}
		+ \lambda	\rv \|\vNone-v\|_{\Xvlowzero}
		 	+ \|\errprojNp\|_{\Xplowzero \cap H^{2}(\Hone)}+\lambda \|\errprojNv\|_{\Xvlowzero}.
		\end{multlined} 
	\end{aligned}
\end{equation}
By possibly further reducing $r_p$ and $\rv$, we obtain $q(\rp, \rv) <1$, such that 
\begin{equation} \label{interim est6}
	\begin{aligned}
	\pvnorm{(p-\pN, v-\vN)} 
		\leq&\, \begin{multlined}[t] 	q(\rp, \rv) \pvnorm{(p-\pNone, v-\vNone)}\\
			+  (1+\|n_0\|_{\Linf})C_0\pvnormcap{(\errprojNp, \errprojNv)}
		\end{multlined}
	\end{aligned}
\end{equation}
for some $C_0>0$. The statement then follows by employing  \eqref{conv Fourier}.\qed
\end{proof}
Convergence of the scheme then  follows in a straightforward manner from \eqref{error est}.
\begin{corollary}[Convergence of the linearized scheme]
 Let the assumptions of Theorem~\ref{thm: convergence} hold. Then the solution $(\pN, \vN) \in \XN \times \XN$ of the iteration scheme \eqref{iterative method} converges with respect to the norm $\pvnorm{(\cdot, \cdot)}$ to the solution $(p, v) \in \ballr$ of  \eqref{ibvp:westervelt volume periodic} as $N \rightarrow \infty$. 
 \end{corollary}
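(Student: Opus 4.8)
The plan is to turn the one-step estimate \eqref{error est pN vN} of Theorem~\ref{thm: convergence} into a statement about the whole sequence by iterating it. Introduce the abbreviations
\begin{equation*}
	a_N \coloneqq \pvnorm{(p-\pN, v-\vN)}, \qquad
	b_N \coloneqq C N^{-\ell}\bigl(\|p\|_{\Xplowell \cap H^{\ell+2}(\Hone)}+\|v\|_{\Xvlowell}\bigr),
\end{equation*}
so that \eqref{error est pN vN} becomes the scalar recursion $a_N \leq q\,a_{N-1}+b_N$ for $N\geq 1$, with a fixed contraction factor $q=q(\rp,\rv)\in[0,1)$. Since $\ell\geq 1$ and the norms appearing in $b_N$ are finite by the regularity hypotheses of Theorem~\ref{thm: convergence}, we have $b_N\to 0$ as $N\to\infty$. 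The goal is then simply to show $a_N\to 0$.

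First I would unroll the recursion by induction to obtain
\begin{equation*}
	a_N \leq q^N a_0 + \sum_{j=1}^N q^{N-j} b_j, \qquad N\geq 1,
\end{equation*}
where $a_0 = \pvnorm{(p,v)}$ is finite because $p^0=v^0=0$. As $0\leq q<1$, the geometric term $q^N a_0$ tends to $0$, so everything reduces to controlling the convolution sum $S_N \coloneqq \sum_{j=1}^N q^{N-j} b_j$.

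The only point that needs a little care --- and thus the mild ``obstacle'' here --- is showing $S_N\to 0$, which I would handle by a standard Toeplitz-type splitting. Given $\eps>0$, pick $J$ so that $b_j\leq\eps$ for $j>J$; then split $S_N$ into the head $\sum_{j=1}^J q^{N-j}b_j \leq q^{N-J}\sum_{j=1}^J b_j$, which vanishes as $N\to\infty$ with $J$ held fixed since $q<1$, and the tail $\sum_{j=J+1}^N q^{N-j}b_j \leq \eps\sum_{i=0}^\infty q^i = \eps/(1-q)$. This yields $\limsup_{N\to\infty} S_N\leq \eps/(1-q)$ for every $\eps>0$, hence $S_N\to 0$ and therefore $a_N\to 0$, which is exactly the claimed convergence in the norm $\pvnorm{(\cdot,\cdot)}$. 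I note in passing that, because $b_j$ decays at the explicit algebraic rate $j^{-\ell}$, choosing instead $J\sim N/2$ in the same splitting upgrades this to a quantitative rate of order $N^{-\ell}$ (up to a constant), so that no genuine analytical difficulty remains beyond what is already established in Theorem~\ref{thm: convergence} and in the truncation bound \eqref{conv Fourier}.
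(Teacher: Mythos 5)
Your proposal is correct and follows essentially the same route as the paper: the paper likewise iterates \eqref{error est pN vN} to obtain the bound $\pvnorm{(p-\pN,v-\vN)} \leq q^N \pvnorm{(p-p^0,v-v^0)} + C\sum_{i=1}^N q^{N-i} i^{-\ell}\bigl(\|p\|_{\Xplowell \cap H^{\ell+2}(\Hone)}+\|v\|_{\Xvlowell}\bigr)$ and then concludes by citing the analogous argument in~\cite[Theorem 3.1]{Rainer2024nonlinear}. The only difference is that you spell out the omitted final step (the Toeplitz-type splitting showing the convolution sum vanishes, with the optional $N^{-\ell}$ rate), which is exactly the detail the paper delegates to the reference.
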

 \begin{proof}
 From \eqref{error est pN vN}, by iteration we obtain 
 	\begin{equation} \label{interim est7}
 		\begin{aligned}
 			\pvnorm{(p-\pN, v-\vN)} 
 			\leq&\, \begin{multlined}[t] 	 q(\rp, \rv)^N \pvnorm{(p-p^0, v-v^0)}
 				\\\hspace*{-0.5cm}+  C\sum_{i=1}^N 	q(\rp, \rv)^{N-i} i^{-\ell} (\|p\|_{\Xplowell \cap H^{\ell+2}(\Hone)}+\|v\|_{\Xvlowell}),
 			\end{multlined}
 		\end{aligned}
 	\end{equation}
 	for some $C>0$, independent of $N$. The statement then follows analogously to~\cite[Theorem 3.1]{Rainer2024nonlinear}, where the multiharmonic discretization of the de-coupled Westervelt equation is studied. We thus omit the details here. \qed
 \end{proof}
 \subsection{Setting  the zeroth microbubble harmonic to zero} 
  Unlike with the Westervelt equation, from the equation for $v$, we cannot in general conclude by integrating from $0$ to $T$ that $\int_0^T v(t) \dt =0$. Nevertheless, in numerical methods for solving multiharmonic problems it is often assumed that the zeroth harmonic does not contribute significantly to the dynamics; this assumption is also made in the formal two-harmonic generation study for the linearized wave-ODE model with $b=\eta=0$ in~\cite[Ch.\ 5.3.2]{hamilton1998nonlinear}. \\
 \indent With the assumption $p_0^N = v_0^N = 0$ for all $N \geq 0$, the system derived in Proposition \ref{prop: multilevel_linearized} further simplifies to
 \small
 \begin{align} \label{eq: multi_sim1}
 		m= 1:&  \begin{cases}
 			\begin{aligned}
 				\text{(i)} & \quad  -  p_1^{N} - \frac{ c^2 + \imath b \omega }{\omega^2} \Delta p_1^N+ c^2 \rho_0  n_0  v_1^{N}  = -h_1^N - \sum_{k=3:2}^{2N-3}  \eta  \overline{p_{\frac{k-1}{2}}^{N-1}} p_{\frac{k+1}{2}}^{N-1} \\[0.3em]
 				\text{(ii)} & \quad  v_1^N = \alpha_1 \left( - \mu p_1^{N} +  \sum_{k=3:2}^{2N-3} \left( \ca - \cb \omega ^2 \frac{k^2 + 3}{4}\right) \overline{v_{\frac{k-1}{2}}^{N-1}} v_{\frac{k+1}{2}}^{N-1}\right) \\
 			\end{aligned}
 		\end{cases}  \\[0.5em]
 		m = \left\lbrace 2, \ldots, N \right\rbrace:& \begin{cases}
 			\begin{aligned}
 				\text{(i)} & \quad - p_m^N- \frac{\left( c^2 + \imath m b \omega  \right) }{ \omega^2 m^2}\Delta p_m^N + c^2 \rho_0 n_0  v_m^{N}\\
 				& \quad = -  h_m^N  - \frac{ \eta}{2} \left( \sum_{l=1}^{m-1}  p_l^{N-1} p_{m-l}^{N-1} -2 \sum_{k = m+2:2}^{2(N-1)-m}  \overline{p_{\frac{k-m}{2}}^{N-1}}  p_{\frac{k+m}{2}}^{N-1}  \right) \\[0.3em]
 				\text{(ii)}& \quad  v_m^N  = \alpha_m \left( -\mu p_m^{N} + \sum_{l=1}^{m-1} \left(\frac{\ca}{2} - \frac{\cb \omega^2}{2} (m-l)(2m-l) \right)  v_l^{N-1} v_{m-l}^{N-1} \right. \\
 				& \quad \left. + \sum_{k=m+2:2}^{2(N-1)-m} \left( \ca \textcolor{black}{-} \frac{\cb \omega^2 (k^2 + 3 m^2)}{4} \right)  \overline{v_{\frac{k-m}{2}}^{N-1}} v_{\frac{k+m}{2}}^{N-1} \right) \\
 			\end{aligned}
 		\end{cases}
 \end{align}
 \normalsize 
 where $\alpha_m = \left(- m^2 \omega^2 + \imath m \delta \omega_0 \omega+ \omega_0^2 \right)^{-1}$ for $m \in \left\lbrace 1, \ldots N\right\rbrace$. 
Now $v_m^N$ can be eliminated explicitly by substituting the second equation into the first, leading to a system of inhomogeneous Helmholtz equations. 

\section{Time discretization via a multiharmonic Ansatz for complex fields} \label{sec: multiharmonic complex}

In this section, we formally investigate multiharmonic algorithms based on neglecting the fact that the excitation and solution fields should be real-valued. We look for approximate solutions in 
\[
	\tilde{X}_N := \left\lbrace \sum_{m=0}^N \exp( \imath m \omega t) u_m^N(x) \, : \, u_m^N \in H^2(\Omega; \C) \right\rbrace.
\]
 This approach, adopted also in the investigation of the (de-coupled) Westervelt equation in~\cite{kaltenbacher2021periodic}, will allow us to arrive at a simple triangular approximation scheme in the frequency domain.  To this end, we simply set 
\begin{equation} \label{def g simple}
	g^M(x,t) = \displaystyle \sum_{m=0}^{M}\exp(\imath  \omega m t) h_m^M(x), \quad \omega = \frac{2 \pi}{T},
\end{equation}
and make the following Ansatz:
\begin{equation} \label{eq: ansatz_linear}
	\pN(x,t)= \sum_{m=0}^{N} \exp(\imath m \omega t) p_m^N(x), \qquad v^N(x,t)= \sum_{m=0}^{N} \exp(\imath m \omega t) v_m^N(x),
\end{equation}
where the coefficients $\pN_m$, $\vN_m \in H^2(\Omega; \C)$ are determined by solving \eqref{semi-discrete} with $\projN(\cdot)$ replaced by $\projNt(\cdot)$. 

\subsection{A multiharmonic cut-off algorithm} We next set up a simplified algorithm (compared to the one in Proposition \ref{prop: multi}) in the sense that the right-hand side in the resulting system will contain only one summation term, such that there is reduced coupling across harmonics, making it computational more efficient. 

In this complex setting, for $v_0^N$, we obtain the quadratic equation
\begin{equation}
	\omega_0^2 v_0^N - \ca (v_0^N)^2 = 0
\end{equation}
and choose the zero solution, such that we have $v_0^N=0$ as before in \eqref{eq: multi_sim1}. This choice is consistent with the formulation used throughout the numerical simulations and avoids introducing spurious constant components.  

\begin{proposition}\label{prop: simplified setting}
	Let the assumptions of Theorem~\ref{thm: existence West-volume} hold  with the acoustic source term $h=g_{tt}$, where $g$ is given in \eqref{def g simple}. 
Under the Ansatz in \eqref{eq: ansatz_linear} with $\pN_0=\vN_0=0$, the problem in \eqref{semi-discrete} with $\projN(\cdot)$ replaced by $\projNt(\cdot)$ can be rewritten as follows:
	\begin{align} \label{multiharm_complex}
			m= 1:&  \begin{cases}
				\begin{aligned}
					\text{(i)} & \quad   - p_1^N- \frac{c^2 + \imath \omega b }{\omega^2} \Delta p^N_1 + c^2 \rho_0 n_0   v_1^N  = -h_1^N  \\[0.3em]
					\text{(ii)} & \quad \frac{1}{\alpha_1} v_1^N + \mu p_1^N  =0\\
				\end{aligned}
			\end{cases}  \\[0.5em]
			m = \left\lbrace 2, \ldots, N \right\rbrace:& \begin{cases}
				\begin{aligned}
					\text{(i)} & \quad -  p_m^N - \frac{(c^2 + \imath m \omega b)}{m^2 \omega^2} \Delta p_m^N + c^2 \rho_0 n_0 v_m^N  =- h_m^N- \eta \sum_{l=1}^{m-1}      p_l^N p_{m-l}^N     \\[0.3em]
					\text{(ii)} & \quad  \frac{1}{\alpha_m} v_m^N + \mu p_m^N= \sum_{l=1}^{m-1} \left( \ca - \cb \omega^2 (m-l)(2m-l) \right) v_{l}^Nv^{N}_{m-l},
				\end{aligned}
			\end{cases}
	\end{align}
where $\alpha_m = \left(-m^2\omega^2 + \imath m \delta \omega_0 \omega + \omega_0^2\right)^{-1}$ for $m \in \left\lbrace 1, \ldots N\right\rbrace$. 
\normalsize 
	Additionally, each of the individual functions $p_m^N$, where $m \in \left\lbrace 0, \ldots, N \right\rbrace$, should satisfy boundary conditions \eqref{eq: bc}.
\end{proposition}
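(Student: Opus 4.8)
The plan is to substitute the complex Ansatz \eqref{eq: ansatz_linear} directly into the semi-discrete system \eqref{semi-discrete} with $\projN$ replaced by $\projNt$, expand every term into a Fourier series in $t$, and then equate coefficients of $\exp(\imath m \omega t)$, invoking the linear independence of the functions $t \mapsto \exp(\imath m \omega t)$. Since time differentiation acts on $\exp(\imath m \omega t)$ as multiplication by $\imath m \omega$, the linear parts are immediate: the ODE operator contributes the coefficient $(-m^2\omega^2 + \imath \delta \omega_0 m \omega + \omega_0^2)\vN_m = \alpha_m^{-1}\vN_m$, and moving the forcing $-\mu p$ to the left gives the left-hand side $\alpha_m^{-1}\vN_m + \mu \pN_m$; the Westervelt operator produces $-m^2\omega^2 \pN_m - (c^2 + \imath b m \omega)\Delta \pN_m$. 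The source $h = g_{tt}$ contributes $-m^2\omega^2 h_m^N$, and the acoustic boundary condition yields $(\imath \omega m \beta + \gamma)\pN_m + \nabla \pN_m \cdot \vecn = 0$, which is \eqref{eq: bc}.

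The computational heart of the argument is the treatment of the quadratic terms, and this is exactly where the complex setting simplifies relative to Proposition~\ref{prop: multi}. For two elements $\sum_l \exp(\imath l \omega t) a_l$ and $\sum_j \exp(\imath j \omega t) b_j$ of $\tilde{X}_N$, the product is $\sum_{l,j}\exp(\imath(l+j)\omega t)\, a_l b_j$, a pure Cauchy convolution whose frequencies all lie in $\{0,\dots,2N\}$. Because every frequency is nonnegative, there is no folding into lower modes, so applying $\projNt$ merely discards the frequencies above $N$, and the coefficient of $\exp(\imath m \omega t)$ in $\projNt[(\cdot)(\cdot)]$ for $0 \le m \le N$ is simply $\sum_{l=0}^m a_l b_{m-l}$ — a single sum with no conjugate cross-terms. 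This contrasts with the real-valued case, where the operator $\real\{\cdot\}$ produces both $\exp(\imath(l+j)\omega t)$ and $\exp(\imath(l-j)\omega t)$ contributions and hence the extra sums over $k$ carrying factors $\overline{v^N_{(k-m)/2}}$ in Proposition~\ref{prop: multi}.

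Applying this convolution rule termwise then yields the claimed right-hand sides. The acoustic nonlinearity $(p^2)_{tt}$ has $m$-th coefficient $-m^2\omega^2 \sum_{l=0}^m \pN_l \pN_{m-l}$; in the ODE, $\ca v^2$ contributes $\ca \sum_{l=0}^m \vN_l \vN_{m-l}$, while $2 v v_{tt}$ contributes $-2\omega^2 \sum_{l=0}^m (m-l)^2 \vN_l \vN_{m-l}$ and $v_t^2$ contributes $-\omega^2 \sum_{l=0}^m l(m-l) \vN_l \vN_{m-l}$, so the combined $\cb$-term carries the factor $-\cb \omega^2[\,2(m-l)^2 + l(m-l)\,] = -\cb \omega^2 (m-l)(2m-l)$. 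Imposing $\pN_0 = \vN_0 = 0$ kills the $l=0$ and $l=m$ contributions, collapsing each sum to $\sum_{l=1}^{m-1}$; the $m=0$ equations are treated separately, with $v_0^N$ taken to be the zero root of $\omega_0^2 v_0^N - \ca (v_0^N)^2 = 0$ and $p_0^N = 0$ following from $-c^2 \Delta p_0^N = 0$ together with the boundary condition $\gamma p_0^N + \nabla p_0^N \cdot \vecn = 0$ and $\gamma>0$. Dividing the $m$-th acoustic equation by $-m^2\omega^2$ for $m\ge 1$ and collecting the terms produces precisely the system \eqref{multiharm_complex}.

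The manipulations are algebraically routine, so the main thing to get right is the bookkeeping: verifying the coefficient identity $2(m-l)^2 + l(m-l) = (m-l)(2m-l)$, and confirming that, because all harmonics carry nonnegative frequencies, the truncation performed by $\projNt$ never contaminates a mode $m \le N$ with aliased high-frequency content. This nonnegativity is the structural reason the resulting system is triangular with a single summation term, and it is the one point that genuinely distinguishes the derivation from its real-valued counterpart.
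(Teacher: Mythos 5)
Your proposal is correct and follows essentially the same route as the paper's proof: substitute the complex Ansatz, use the Cauchy-convolution structure of products of one-sided exponential sums, verify the coefficient identity $2(m-l)^2 + l(m-l) = (m-l)(2m-l)$ for the $\cb$-term, and equate coefficients via linear independence of $t \mapsto \exp(\imath m \omega t)$. Your added remarks — that all product frequencies are nonnegative so $\projNt$ only truncates without aliasing (the structural reason for triangularity), and the justification of $p_0^N = v_0^N = 0$ — match the paper's surrounding discussion and the remark following the linearized complex algorithm.
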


\begin{proof}
	Plugging the approximation given in \eqref{eq: ansatz_linear} into \eqref{ibvp:westervelt volume periodic} yields
	\begin{align}
			\sum_{m=0}^{N} & \left[ - m^2 \omega^2 p_m^N- (c^2 + \imath m \omega b) \Delta p_m^N + c^2 \rho_0 n_0 m^2 \omega^2 v_m^N +   m^2\omega^2 h_m^N\right. \\
			& \left.  \qquad \qquad+  \sum_{l=0}^m \eta m^2 \omega^2 p_l^N p_{m-l}^N  \right] \exp(\imath m \omega t)= 0.
	\end{align}
	\normalsize 
	Concerning the nonlinear term on the right hand-side of the ODE, we obtain
	\begin{align}
		\text{Proj}_{\tilde{X}_N}(2 \vN \vN_{tt} + (\vN_t)^2) &= - \sum_{m=0}^N \sum_{l=0}^m \left( 2(m-l)^2 + l (m-l) \right) \omega^2 v_l^N v_{m-l}^N \exp(\imath m \omega t) \\
		& = - \sum_{m=0}^N \sum_{l=0}^m (m-l)(2m-l)  \omega^2 v_l^N v_{m-l}^N \exp(\imath m \omega t).
	\end{align}
Therefore, the approximation for the ODE is given by
	\begin{align}
			\sum_{m=0}^{N} & \left[ \vphantom{\sum_{l=0}^m} \left( -m^2 \omega^2 + \delta \omega_0 \imath m \omega + \omega_0^2 \right)v_m^N + \mu p_m^N \right. \\
			& \left.+ \sum_{l=0}^m \left( - \ca + \cb \omega^2 (m-l)(2m-l)\right) v_l^N v_{m-l}^N \right] \exp(\imath m \omega t) = 0.
	\end{align}
	\normalsize 
	Using linear independence of the functions $t \mapsto \exp(\imath m \omega t)$, we immediately arrive at the iterative coupled system.\qed
\end{proof}

\subsection{A linearized multiharmonic cut-off algorithm} 
We next also want to look at the linearized set-up given in \eqref{system_lin} (with $\projN(\cdot)$ replaced by $\projNt(\cdot)$) in the setting of complex solution fields. In this setting, for $u^N \in \tilde{X}_N$, we formally define $u_m^N = 0$ for all $m>N$.

\begin{proposition}
	Let the assumptions of Theorem~\ref{thm: existence West-volume} hold  with the acoustic source term $h=g_{tt}$, where $g$ is given in \eqref{def g simple}. 	 
	Under the Ansatz in \eqref{eq: ansatz_linear} with $\pN_0=\vN_0=0$, the problem in \eqref{system_lin}, with $\projN(\cdot)$ replaced by $\projNt(\cdot)$, can be rewritten as follows:
	\begin{align} \label{system_v_0_0}
		\begin{aligned}
				m= 1:&  \begin{cases}
						\begin{aligned}
								\text{(i)} & \quad   - p_1^N- \frac{c^2 + \imath \omega b }{\omega^2} \Delta p^N_1 + c^2 \rho_0 n_0   v_1^{N}  = -h_1^N,  \\[0.3em]
								\text{(ii)} & \quad \frac{1}{\alpha_1} v_1^N + \mu p_1^{N} =0, \\
							\end{aligned}
					\end{cases}  \\[0.5em]
				m = \left\lbrace 2, \ldots, N \right\rbrace:& \begin{cases}
						\begin{aligned}
								\text{(i)} & \quad -  p_m^N - \frac{(c^2 + \imath m \omega b)}{m^2 \omega^2} \Delta p_m^N + c^2 \rho_0 n_0 v_m^{N}  =- h_m^N-\eta \sum_{l=1}^{m-1}      p_l^{N-1} p_{m-l}^{N-1},     \\[0.3em]
								\text{(ii)} & \quad  \frac{1}{\alpha_m} v_m^N + \mu p_m^{N}  = \sum_{l=1}^{m-1} \left(\ca  - \cb \omega^2 (m-l)(2m-l) \right) v_{l}^{N-1}v^{N-1}_{m-l},
							\end{aligned}
					\end{cases}
			\end{aligned}
	\end{align} 
where $\alpha_m = \left(-m^2\omega^2 + \imath m \delta \omega_0 \omega + \omega_0^2\right)^{-1}$ for $m \in \left\lbrace 1, \ldots N\right\rbrace$. Additionally, each of the individual functions $p_m^N$ for $m \in \left\lbrace 0, \ldots, N \right\rbrace$ is supposed to satisfy boundary conditions \eqref{eq: bc}.
\end{proposition}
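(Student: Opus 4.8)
The plan is to proceed exactly as in the proof of Proposition~\ref{prop: simplified setting}, the only difference being that the quadratic terms are now frozen at the previous iterate $(\pNone,\vNone)$ rather than evaluated at $(\pN,\vN)$. First I would substitute the complex Ansatz \eqref{eq: ansatz_linear} into the linearized system \eqref{system_lin} with $\projN(\cdot)$ replaced by $\projNt(\cdot)$ and differentiate termwise. Collecting the \emph{linear} contributions produces, in front of each $\exp(\imath m \omega t)$, the factor $-m^2\omega^2 p_m^N - (c^2+\imath m \omega b)\Delta p_m^N + c^2\rho_0 n_0 m^2\omega^2 v_m^N$ on the pressure side and $(-m^2\omega^2+\imath m \delta\omega_0\omega+\omega_0^2)v_m^N + \mu p_m^N$ on the ODE side. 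Dividing the pressure equation by $m^2\omega^2$ and introducing $\alpha_m=(-m^2\omega^2+\imath m\delta\omega_0\omega+\omega_0^2)^{-1}$ then reproduces the left-hand sides displayed in \eqref{system_v_0_0}.

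The key step is the evaluation of the projected nonlinearities. Because we now work with genuinely complex series, the product $(\pNone)^2$ is the Cauchy product $\sum_k \exp(\imath k \omega t)\sum_{l=0}^k p_l^{N-1}p_{k-l}^{N-1}$, so that $((\pNone)^2)_{tt}$ contributes a factor $-k^2\omega^2$ to each mode; applying $\projNt$ retains only $0\leq k\leq N$, and reading off the $m$-th harmonic gives $-m^2\omega^2\sum_{l=0}^m p_l^{N-1}p_{m-l}^{N-1}$, which reduces to $-m^2\omega^2\sum_{l=1}^{m-1}p_l^{N-1}p_{m-l}^{N-1}$ once $\pN_0=0$ is used. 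The crucial simplification relative to the real-valued setting of Proposition~\ref{prop: multilevel_linearized} is that no conjugated cross-terms (the sums over $k$) appear, precisely because dropping the $\real$ operator removes all negative-frequency contributions. Applying the identity $\projNt(2\vNone\vttNone+(\vtNone)^2)=-\sum_m\sum_{l=0}^m (m-l)(2m-l)\omega^2 v_l^{N-1}v_{m-l}^{N-1}\exp(\imath m\omega t)$ established in the proof of Proposition~\ref{prop: simplified setting}, and combining it with $\ca(\vNone)^2$, yields the quadratic coefficient $\sum_{l=1}^{m-1}(\ca-\cb\omega^2(m-l)(2m-l))v_l^{N-1}v_{m-l}^{N-1}$ after incorporating $v_0^{N-1}=0$.

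Finally I would invoke the linear independence of $\{t\mapsto\exp(\imath m\omega t)\}_{m=0}^N$ to equate the coefficient of each $\exp(\imath m\omega t)$ to zero, obtaining the coupled system \eqref{system_v_0_0}; the boundary conditions \eqref{eq: bc} follow from the acoustic boundary condition in \eqref{system_lin} by the same termwise matching. I do not anticipate a genuine obstacle: this statement is the linearized counterpart of Proposition~\ref{prop: simplified setting}, and the only point requiring care is the bookkeeping of the index ranges of the Cauchy products together with the convention $u_m^N=0$ for $m>N$, which guarantees that for every $m\in\{2,\dots,N\}$ all surviving indices satisfy $l,\,m-l\leq N-1$ and hence that the upper summation limits collapse to those shown in \eqref{system_v_0_0}.
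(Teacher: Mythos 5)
Your proposal is correct and takes essentially the same route as the paper: the paper's proof is a one-line reference back to Proposition~\ref{prop: simplified setting} ("substituting the approximation immediately yields the iterative coupled system"), and your write-up simply makes explicit what that shorthand entails — the Cauchy-product structure of $((\pNone)^2)_{tt}$ without conjugated cross-terms, the projection identity $\projNt(2\vNone\vttNone+(\vtNone)^2)$ yielding the $(m-l)(2m-l)$ coefficients, the use of $\pN_0=\vN_0=0$ to collapse the summation ranges to $l=1,\dots,m-1$, and termwise matching via linear independence of $\{e^{\imath m\omega t}\}$. Your index bookkeeping with the convention $u^N_m=0$ for $m>N$ and the derivation of the boundary conditions~\eqref{eq: bc} are likewise consistent with the paper's argument.
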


\begin{proof}
	Similarly to the previous case, substituting the approximation provided in \eqref{eq: ansatz_linear} into \eqref{ibvp:westervelt volume periodic} immediately yields the iterative coupled system.\qed
\end{proof}
We see that when working with complex fields from the beginning, we end up with a lower triangular system of inhomogeneous Helmholtz equations and algebraic equations that can be solved by substitution. 
\begin{remark}[On the assumption of complex fields]
		In this section, the approximations $(p^N,v^N)$ are complex-valued, so convergence could only be studied in the complex counterpart of $X_N\times X_N$. This does not by itself imply that $\Re(p^N,v^N)\to(p,v)$ as $N\to\infty$; such a conclusion would require enforcing the reality constraint (symmetric $\pm m$ spectrum with $\hat u_{-m}=\overline{\hat u_m}$). Following \cite{bachinger2005numerical}, one may formulate the multiharmonic Ansatz with complex Fourier coefficients (omitting the explicit $\Re$ in the time reconstruction), but the resulting nonlinear maps are not complex-differentiable (holomorphic). Hence a Newton scheme in the sense of complex analysis is not applicable; in practice one uses the real formulation (or a split into real/imaginary parts) despite the algebraic brevity of the complex notation.\\
		\indent	For clarity, eq.~\eqref{eq: multi_sim1} (real basis) contains both sum- and difference-frequency couplings generated by the quadratic terms, visible in the two convolution sums $ \sum_{l=1}^{m-1} p_l\,p_{m-l}$ and $ \sum_{k=m+2:2}^{2(N-1)-m} p_{(k-m)/2}\,p_{(k+m)/2}$.
		In contrast, the complex projection \eqref{system_v_0_0} is posed on the half-spectrum $\{e^{\mathrm i m\omega_0 t}\}_{m=1}^N$ without imposing $\hat{u}_{-m}=\overline{\hat{u}_m}$; only the sum-frequency interactions remain and the block system becomes lower-triangular (solvable by substitution). If one augments the complex space to include $\pm m$ and enforces the reality constraint, the complex and real formulations are algebraically equivalent. In our parameter regime, the neglected difference-frequency contributions are numerically negligible, which explains the agreement observed in Fig.~\ref{fig: simplified_vs_real} in Section \ref{sec:numerics}. 
\end{remark}
\subsection{A simple two-harmonic approximation scheme}  In certain imaging applications, the fundamental frequency and the second harmonic are considered the most significant; see, for example,~\cite{neer2011}. We thus compare the multiharmonic iterative scheme with a simplified two-harmonic scheme to evaluate the extent of information loss.

For two harmonics, that is, for $N=2$ with $p_0^{(2)}=0$ and $v_0^{(2)}=0$, the approximations for the pressure $p$ and the volume $v$ are given by
\begin{equation}\label{Ansatz: two harmonic}
	\begin{aligned} 
	p(x,t) \approx	p^{(2)}(x,t) &= p_1^{(2)}(x) \exp(\imath \omega t) + p_2^{(2)}(x) \exp(2 \imath \omega t), \\ v(x,t) \approx v^{(2)}(x,t) &= v_1^{(2)}(x) \exp(\imath \omega t) + v_2^{(2)}(x) \exp(2 \imath \omega t). 
	\end{aligned}
\end{equation}

The corresponding scheme for computing $p_{1,2}^{(2)}$ and $v_{1,2}^{(2)}$ can be found by setting $N=2$ in Proposition \ref{prop: simplified setting}. This results in the system
\begin{align} \label{iterative_two_harmonics}
	\begin{aligned}
		m= 1:&  \begin{cases}
			\begin{aligned}
				\text{(i)} & \quad   p_1^{(2)}+ \frac{c^2 + \imath \omega b }{\omega^2} \Delta p^{(2)}_1 - c^2 \rho_0 n_0   v_1^{(2)}  = h_1^{(2)},  \\[0.3em]
				\text{(ii)} & \quad \frac{1}{\alpha_1} v_1^{(2)} + \mu p_1^{(2)}  =0,\\
			\end{aligned}
		\end{cases}  \\[0.5em]
		m = 2:& \begin{cases}
			\begin{aligned}
				\text{(i)} & \quad p_2^{(2)}+ \frac{c^2 + 2 \imath \omega b}{4 \omega^2} \Delta p_2^{(2)} - c^2 \rho_0 n_0 v_2^{(2)}  = h_2^{(2)} +\frac{ \beta_a }{ \rho_0 c^2} (p_1^{(2)})^2,   \\[0.3em]
				\text{(ii)} & \quad  \frac{1}{\alpha_2} v_2^{(2)} + \mu p_2^{(2)} = \left( \ca - 3 \cb \omega^2 \right) (v_1^{(2)})^2,
			\end{aligned}
		\end{cases}
	\end{aligned}
\end{align}
where, as before,
\begin{equation}
	\alpha_m = \frac{1}{- m^2 \omega^2 + \imath m \delta \omega_0  \omega + \omega_0^2} = \frac{ \omega_0^2- m^2 \omega^2}{ (\omega_0^2- m^2 \omega^2 )^2+  (m \delta \omega_0  \omega)^2} -\imath  \frac{  m \delta \omega_0  \omega}{ (\omega_0^2- m^2 \omega^2 )^2+  (m \delta \omega_0  \omega)^2}
\end{equation}
for $m \in \{1,2\}$.  
Above, we have rewritten $\eta$ as $\eta = \dfrac{ \beta_a }{ \rho_0 c^2}$ to make the influence of the nonlinearity parameter in the medium $\beta_a$ on the system explicit. In this two-harmonic setting, we can express $v_1^{(2)}$ in terms of $p_1^{(2)}$ and $v_2^{(2)}$ in terms of $p_2^{(2)}$:
\begin{equation}\label{two harmonics ODE}
	\begin{aligned}
		v_1^{(2)} &= - \alpha_1 \mu \, p_1^{(2)}, \\
		v_2^{(2)} &= \alpha_2 \left[ - \mu p_2^{(2)} + (\ca-3\cb\omega^2) {\alpha}_1^2 \mu^2 \left( p_1^{(2)}\right)^2 \right].
	\end{aligned}
\end{equation}
We can then use these expressions to eliminate $v_1^{(2)}$ and $v_2^{(2)}$ for the first and third equations in \eqref{iterative_two_harmonics}. Multiplying the resulting equations by  $m^2 \omega^2/c^2$ yields the following system for the two pressure coefficients:
\begin{align} \label{two-harmonic: nonlinear}
	\begin{aligned}
		m = 1: \qquad &\frac{ \omega^2}{\tilde{c}_1^2} p_1^{(2)} +\left( 1 + \imath \frac{ b \omega}{c^2}  \right)  \Delta p_1^{(2)} = \frac{\omega^2}{c^2} h_1^{(2)},\\
		m = 2: \qquad & \frac{4 \omega^2}{\tilde{c}_2^2} p_2^{(2)} + \left( 1 +\imath \frac{2 b \omega}{c^2} \right) \Delta p_2^{(2)} = \frac{4 \omega^2}{c^2} h_2^{(2)} + \frac{4 \omega^2 }{\rho_0 c^4 } \left( \beta_a +  \tilde{\beta}_a\right)  \left( p_1^{(2)}\right) ^2,  
	\end{aligned}
\end{align}
where 
\begin{equation} \label{def cm}
	\frac{1}{\tilde{c}_{m}^2} =  \frac{1}{c^2} + \rho_0 n_0  \mu \alpha_{m}, \qquad 	\tilde{\beta}_a= c^4 \rho_0^2  n_0(\ca-3\cb \omega^2)\mu^2 \alpha_1^2 \alpha_2.
\end{equation}
The above system clearly displays the influence of microbubbles on wave propagation, particularly through their impact on the effective wave speed and the nonlinearity parameter.
\begin{itemize}[leftmargin=*]
	\item From equations \eqref{two-harmonic: nonlinear}, we see that the effective wave number in the presence of microbubbles in a two-harmonic setting is 
		\begin{equation}
				\tilde{k}_m= \frac{m \omega}{\tilde{c}_m}, \quad \text{with } \tilde{c}_m \ \text{ given in } \eqref{def cm}.
			\end{equation}
	Note that $\tilde{k}_m$ is in general complex since $\alpha_m$ is complex, and thus the attenuation in the wave propagation is due both to the acoustic dissipation via the $\imath \frac{m b \omega}{c^2} \Delta p_m^{(2)}$ terms as well as the presence of microbubbles via the $\imath \Im(\tilde{k}^2_m) p_m^{(2)}$ terms. \\[1mm]
	\item  In the second equation in \eqref{two-harmonic: nonlinear}, we see the influence of microbubbles on the nonlinearity parameter as their presence leads to the additional $\tilde{\beta}_a$ term on the right-hand side, thus effectively  enlarging the nonlinearity parameter $\beta_a$. By setting $\tilde{\beta} = 0$ and replacing $\tilde{c}^2_{m}$ with $c^2$, the system reduces to the two-harmonic expansion of Westervelt's equation for a strongly damped medium without bubbles. 
\end{itemize}
\def\fomega{f_\omega}
\section{Numerical experiments}\label{sec:numerics}
In this section, we investigate numerically the algorithms set up in Sections \ref{sec: multiharmonic} and \ref{sec: multiharmonic complex}. To discretize the resulting Helmholtz problems, we use a conforming finite element method. We note that algorithmic frameworks of the multiharmonic-FEM type used in this section are known as the Harmonic Balance Finite Element Method (HBFEM) in the context of electromagnetism; see, e.g.,~\cite{HBFEM2016, bachinger2005numerical, bachinger2006efficient}. \\
\indent The discretization of the classical Helmholtz equation $-\Delta u-k^2 u=f$ with $k>0$, and the issues related to the so-called pollution effect and sign-indefiniteness have been extensively studied in the literature; see, e.g., \cite{babuska2000, ihlenburg1995finite, melenk2010, moiola2014helmholtz}, and the references contained therein. Recall that the Helmholtz problems appearing in  linearized problems in \eqref{eq: multi_sim1} and \eqref{system_v_0_0} have the general form 
\begin{equation} \label{Helmholtz problems numerics}
\left\{	\begin{aligned}
	&-\left(1+\imath \omega\frac{m b}{c^2} \right)\Delta u - (k^2+\fraka) u + \imath \mathfrak{b} u= \fomega \quad \text{ in }\, \Omega,\quad \text{with } k= \frac{m \omega}{c},\\
	&(\imath \omega m \beta + \gamma ) u + \nabla u \cdot \vecn = 0 \quad \text{ on }\, \partial \Omega,
	\end{aligned} \right.
	\end{equation}
for a known right-hand side $\fomega \in L^2(\Omega; \C)$ (that depends on $\omega$), and with
\begin{equation}
	\fraka = \mu \rho_0 n_0 \frac{m^2 \omega^2 (\omega_0^2 - m^2 \omega^2)}{(\omega_0^2-m^2\omega^2)^2 + (m \delta \omega_0 \omega)^2}, \quad \frakb = \mu \rho_0 n_0 \frac{m^3 \omega^3 \delta \omega_0 }{(\omega_0^2-m^2\omega^2)^2 + (m \delta \omega_0 \omega)^2}. 
\end{equation}
 The variational form of \eqref{Helmholtz problems numerics} is given by
\begin{equation} \label{fem form}
	a(u, \phi) = l(\phi) \quad \text{for all } \phi \in H^1(\Omega; \C),
\end{equation}
where the sesquilinear form $a: \Hone \times \Hone \rightarrow \C$ is defined as 
\begin{equation}
	\begin{aligned}
a(u,\phi)\coloneqq &\,\begin{multlined}[t]		\intO \left(1+ \imath \omega \frac{m b}{c^2}\right)(\nabla u \cdot \overline{\nabla \phi}) \dx -(k^2+\fraka) \intO u \overline{\phi}\dx +\imath \frakb \intO u \overline{\phi}\dx 
\\	+ \int_{\partial \Omega} \left(1+\imath \omega\frac{m b}{c^2} \right)(\imath \omega m \beta + \gamma ) u \overline{\phi}\dG
\end{multlined}
	\end{aligned}
	\end{equation}
and $l(\phi) \coloneqq \intO f \overline{\phi}\dx$. We then have 
\begin{equation}
	\begin{aligned}
		\real \left(a(u,u)\right) = \|\nabla u\|^2_{\Ltwo} - (k^2+\fraka)\|u\|^2_{\Ltwo} + \left(\gamma - \frac{b\beta (m \omega)^2}{c^2}\right)\|u\|^2_{L^2(\partial \Omega)}.
	\end{aligned}
	\end{equation}
and
\begin{equation}
	\begin{aligned}
		\Im \left(a(u,u)\right) = \frac{m \omega b}{c^2} \|\nabla u\|^2_{\Ltwo} +\frakb \|u\|^2_{\Ltwo}+\left(\gamma \frac{b}{c^2}+\beta \right)\omega m  \|u\|^2_{L^2(\partial \Omega)}
	\end{aligned}
\end{equation}
Therefore, we can conclude that
\begin{equation}
	\begin{aligned}
		\sqrt{2} |a(u,u)| \geq &\,	\real \left(a(u,u)\right)+	\Im \left(a(u,u)\right) \\
		=&\,\begin{multlined}[t] \left(1 +\frac{m \omega b}{c^2}\right) \|\nabla u\|^2_{\Ltwo}+ (\frakb - k^2-\fraka)\|u\|^2_{\Ltwo}\\
		+	\left(\gamma \left(1+ \omega m \frac{b}{c^2}\right)+\beta\left(1 -\frac{b  m \omega}{c^2}\right) m \omega \right) \|u\|^2_{L^2(\partial \Omega)},
			\end{multlined}
	\end{aligned}
	\end{equation}
which showcases how the coercivity of $a(\cdot, \cdot)$ is influenced by the relative behavior of the various medium and frequency-dependent parameters. Note that from the complex part of \eqref{fem form}, we obtain the bound
\begin{equation}
	\frac{m \omega b}{c^2} \|\nabla u\|^2_{\Ltwo} +\frac{\frakb}{2} \|u\|^2_{\Ltwo}+\left(\gamma \frac{b}{c^2}+\beta \right)\omega m  \|u\|^2_{L^2(\partial \Omega)}  \leq  \frac{1}{2 \frakb}\|\fomega\|^2_{\Ltwo},
\end{equation}
which carries over to the finite element setting. 
\subsection{Numerical framework}
All simulations are performed using a conforming two-dimensional finite element discretization with linear Lagrange elements, using FEniCSx v0.9.0 (see, e.g., \cite{baratta_2023}) with Gmsh~\cite{gmsh_2009} employed for meshing. A direct LU factorization is used to solve the resulting linear systems without iterative refinement. \footnote{The program code for the simulations in this section is available as an ancillary file from the arXiv page of this paper.}   \\
\indent We use a circular domain of propagation $\Omega = B_{0.2}(0) \subset \mathbb{R}^2$ with a radius of $0.2 \, \si{\meter}$ and place a monopole source $h=h(x)$ at $x_0 = (0,0)^T$ defined as
\begin{align}
	h(x) = \begin{cases}
		\frac{a}{4 r_\delta} \left( 1 + \cos\left( \pi \frac{\left\| x - x_0 \right\|_{l^2} }{2 r_{\delta}}\right) \right)  \qquad \quad & \left\| x - x_0 \right\|_{l^2} \leq 2 r_{\delta}, \\
		0 & \text{otherwise.}
	\end{cases}
\end{align}
We present numerical results for various values of $a$, using $\omega=\omega_0$ and $r_{\delta}=0.004$ unless stated otherwise. The physical parameters used in the simulations are chosen as typical in ultrasound contrast imaging (see, e.g.,~\cite{hoff2001acoustic, hamilton1998nonlinear}), and are listed in Table \ref{table: parameters}.

\begin{table}[h]
	\begin{center}
		\begin{tabular}{l l l | l l l}
			\hline
			&&&&&\\[-0.25cm]
			$c$ & speed of sound & $1500\,\si{\m / \s}$ & $b$ & diffusivity of sound  & $1 \cdot 10^{-3}$\\
			$\rho_0$ & mixture mass density & $1000 \, \si{\kilogram / \m^3}$ & $\beta_a$ & nonlinearity coefficient & $ 3.5$ \\
			$R_0$ & initial radius  & $2 \, \si{\micro \m}$& $n_0$ & bubble number density & $1 \cdot 10^{12} \, \si{1 / \milli \liter}$ \\
			$P_0$ & vapor pressure & $100 \, \si{\pascal}$ & $\kappa$  & adiabatic exponent & $1.4$ 
			 \\
			$\nu$ & kinematic viscosity & $8.9 \cdot 10^{-6} \si{\square \meter / \second}$ &
			&&\\[0.2cm]
			\hline
		\end{tabular}
	\end{center}
	\caption{Overview of the parameter values used in the simulations.}
	\vspace{-0.0cm}
	\label{table: parameters}
\end{table}
\noindent Using the values in Table~\ref{table: parameters}, we compute
\begin{equation}
	\omega_0 = \sqrt{\frac{3 \kappa P_0}{\rho_0 R_0^2}} \approx 0.324 \, \si{\mega \hertz}
\end{equation}
and additionally $\delta = \frac{4 \nu}{\omega_0 R_0^2}$, $v_0 = \frac{4 \pi}{3} R_0^3$, $\mu = \frac{4 \pi R_0}{\rho_0}$, $\ca= \frac{(\kappa + 1) \omega_0^2}{2 v_0}$, and $\cb = \frac{1}{6 v_0}$ are fixed. Furthermore, we set $\beta = \frac{1}{c}$ and $\gamma = 1$ in the boundary conditions for Helmholtz problems.

\subsection{The influence of spatial discretization}\label{sec: numerical evaluation} 
 
For a fixed $N$, we first investigate the convergence of 
a quantity of interest given by 
\[\left\| \Re(p_{\hFEM}^{N}) \right\|_{L^{\infty}(0,T; L^2(\Omega))},\] where $p_{\hFEM}^N$ denotes the approximate pressure, as the spatial discretization parameter $\hFEM \searrow 0$.
A common rule of thumb for finite element discretizations of the classical Helmholtz equation with linear elements is to use at least $10$ elements per wavelength in each spatial direction. 
If the mesh is not refined accordingly, one encounters the above-mentioned pollution effect, where numerical phase errors grow with increasing $k$; see, e.g., \cite{babuska2000, ihlenburg1995finite, melenk2010}. 

\begin{figure}[h]
	\centering
	\includegraphics[width=0.75\textwidth]{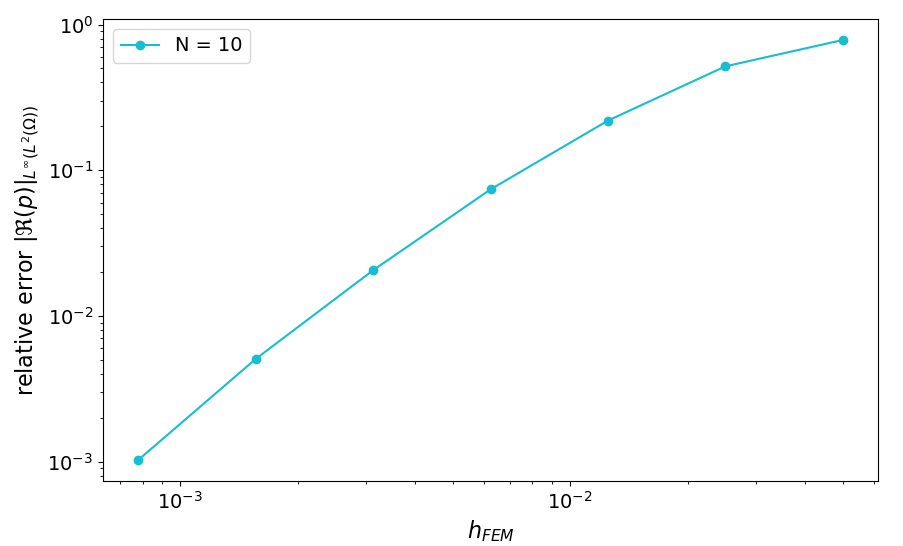}
	\caption{
		Relative error  \eqref{eq:relative_difference} of the quantity of interest
		 with respect to the mesh size $\hFEM$ on a log--log scale.
	}
	
	\label{fig:conv_h}
\end{figure}

Figure~\ref{fig:conv_h} illustrates the behavior of the relative difference
\begin{equation} \label{eq:relative_difference}
	\frac{
			\left|
			\bigl\|\Re(p_{\hFEM}^{N})\bigr\|_{L^\infty(0,T;L^2(\Omega))}
			-
			\bigl\|\Re(p^{\mathrm{ref}})\bigr\|_{L^\infty(0,T;L^2(\Omega))}
			\right|
		}{
			\bigl\|\Re(p^{\mathrm{ref}})\bigr\|_{L^\infty(0,T;L^2(\Omega))}
		},
\end{equation}
as the mesh is refined, using the solution on the finest spatial mesh as reference. 
A clear algebraic decay is observed for decreasing mesh size~\change{$h_{\textup{FEM}}$}, indicating that the spatial discretization error is well controlled in the considered parameter regime and that the chosen mesh resolutions are sufficient to avoid significant pollution effects for the frequencies under consideration. The similarity of the convergence curves for different truncation levels $N$ shows that the spatial discretization error dominates over the effect of the multiharmonic truncation.

\begin{table}[h]
	\centering
	\begin{tabular}{cccc}
		\toprule
		$\mh$ & Nodes & Elements & Time \\
		\midrule
		0.000390625   & 9,564,795  & 1,933,589  & 9984.8 s \\
		0.00078125 & 239,024    & 478,047    & 1244.3 s \\
		0.0015625  & 60,136     & 120,271    & 145.5 s \\
		0.003125  & 15,218     & 30,435     & 19.8 s \\
		0.00625  & 3,896      & 7,791      & 5.16 s \\
		0.0125    & 1,010      & 2,019       & 2.72 s \\
		0.025     & 279        & 557       & 2.24 s \\
		0.05     & 85        & 169       & 2.15 s \\
		\bottomrule
	\end{tabular}
		\captionsetup{width=0.6\textwidth}
	\caption{Mesh sizes, degrees of freedom, and computational time for different spatial discretizations.}
	\label{tab:data}
\end{table}
Additional quantitative data are provided in Table~\ref{tab:data}, which lists the corresponding mesh sizes, number of nodes and elements, as well as the computational time required for each case. This highlights the significant increase in computational cost associated with finer meshes, especially for very small $\mh$, where both the number of degrees of freedom and the runtime grow substantially. All simulations were carried out on a standard laptop (Intel Core i7, 16GB RAM, no dedicated GPU), underscoring the feasibility of the method even without access to high-performance computing resources.

In the following simulations, we use a mesh size of $\mh = 0.003$.  
Unless stated otherwise in the figure captions, we set $r_{\delta} = 0.004$, $a = 10^5$, and $\omega = \omega_0$. 
\subsection{Comparison of real- and complex-valued fields}\label{sec: numerics multiharmonic}
We compare the five-harmonic pressure obtained with the linearized multiharmonic cut-off algorithm using the complex-valued formulation \eqref{system_v_0_0} and the real-valued formulation \eqref{eq: multi_sim1}. In the considered numerical setting, no noticeable differences between the resulting pressure fields are observed.

\begin{figure}[h]
	\centering
	\includegraphics[width = 0.67\linewidth]{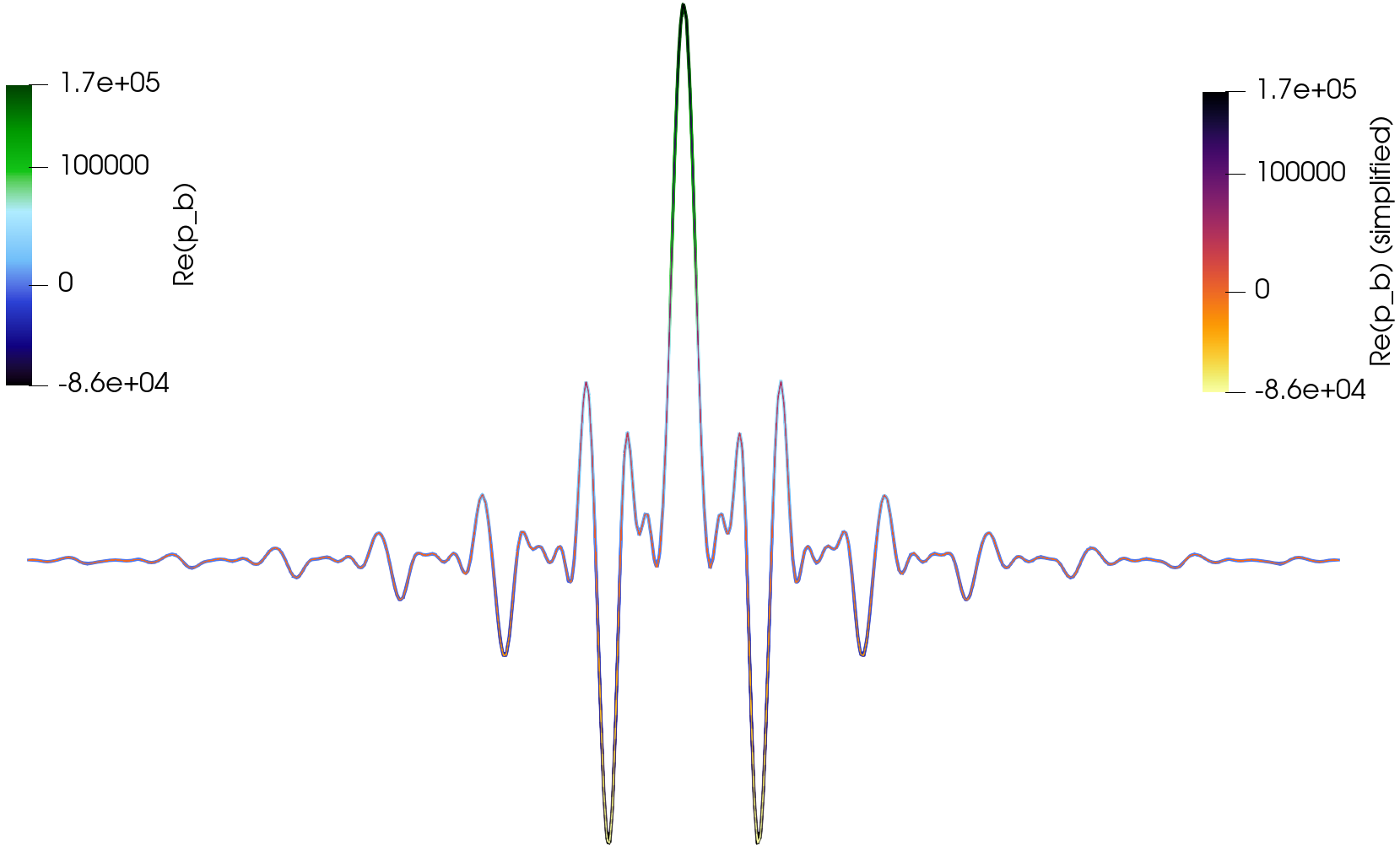}
	\caption{Five-harmonic expansion $\real(p_{\hFEM}^{(5)}(x_1,x_{2,0},t_0))$ plotted as a function of $x_1$ for a fixed $x_2=x_{2,0}$ and $t=t_0$ resulting from complex fields in \eqref{system_v_0_0} vs. real setting in \eqref{eq: multi_sim1}.}
	\label{fig: simplified_vs_real}
\end{figure}
	
To further quantify this observation, we compare the $L^2(\Omega)$-norms of the harmonic components obtained from the complex- and real-valued formulations. The relative differences in these norms are below $5 \cdot 10^{-3}$ for the dominant harmonics $m = 1,\ldots,4$, and below $2 \cdot 10^{-2}$ for higher harmonics $m \ge 5$. At the same time, the amplitudes of these higher harmonics are already several orders of magnitude smaller than that of the fundamental mode.

We therefore consider both formulations to be numerically equivalent in the considered parameter regime. Since the complex formulation leads to a computationally more efficient algorithm, it is used in all subsequent simulations.

\subsection{The influence of the number of harmonics}

We next analyze how many harmonics need to be retained in the multiharmonic expansion for the considered parameter settings. We first investigate the \change{behavior} of the \change{approximate} pressure field with respect to the truncation level $N$ \change{for a fixed $h_{\textup{FEM}}$}, which motivates the choice of a suitable reference solution.

\begin{figure}[h]
	\centering
	\includegraphics[width=0.67\textwidth]{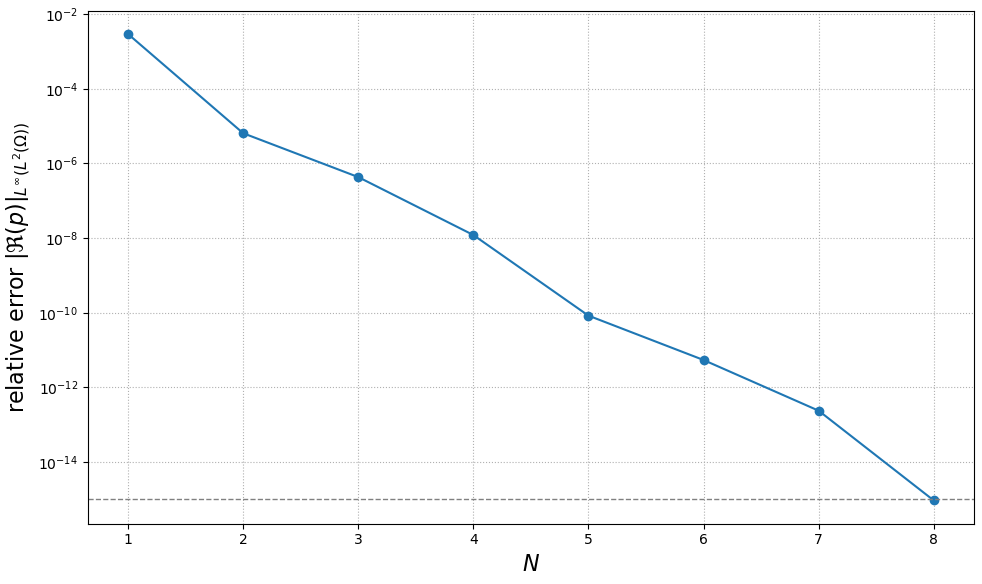}
	\caption{
		Relative error \eqref{eq:relative_difference} of the quantity of interest 
	with respect to the truncation level $N$ on a semi-log scale (logarithmic $y$-axis)
	}
	\label{fig:conv_N}
\end{figure}

Figure~\ref{fig:conv_N} shows the relative error of the  pressure field \change{as defined in \eqref{eq:relative_difference} for increasing truncation levels $N$}, using the solution with $N=10$ harmonics as reference. For larger values of $N$, the relative error levels off at approximately machine precision, reflecting that further changes in the norm are below the accuracy of the numerical discretization and floating-point arithmetic. \\
\indent To illustrate how the contributions from individual harmonics are reflected in the approximate pressure field, we additionally consider the effect of truncating the multiharmonic expansion in the time domain. For a fixed reference time $t_0$, we compute the pointwise differences 
\[
\left|p_{\hFEM}^N(x,t_0) -p_{\hFEM}^{(10)}(x,t_0) \right|,
\]
where $p_{\hFEM}^N(x,t)$ denotes the approximate pressure field obtained using the first $N$ harmonics and the solution with $N=10$ harmonics serves as reference. 
\begin{figure}[h]
	\centering
	\begin{minipage}[t]{0.24\textwidth}
		\centering
		\includegraphics[width=\textwidth]{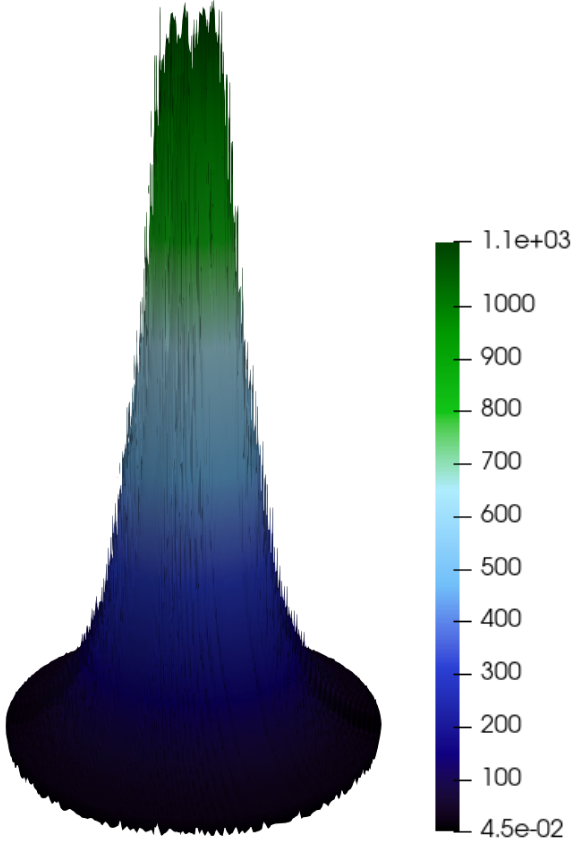}
		\caption*{$N=2$}
	\end{minipage}\hfill
	\begin{minipage}[t]{0.24\textwidth}
		\centering
		\includegraphics[width=\textwidth]{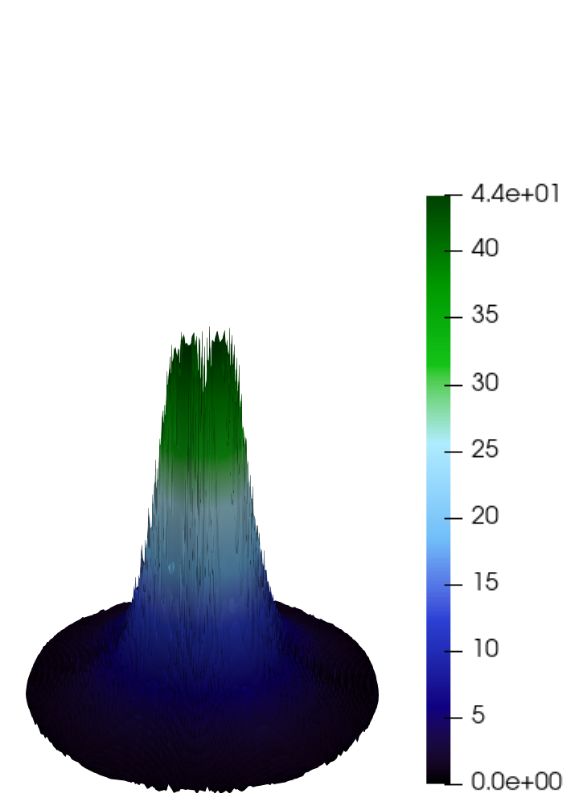}
		\caption*{$N=3$}
	\end{minipage}\hfill
	\begin{minipage}[t]{0.24\textwidth}
		\centering
		\includegraphics[width=\textwidth]{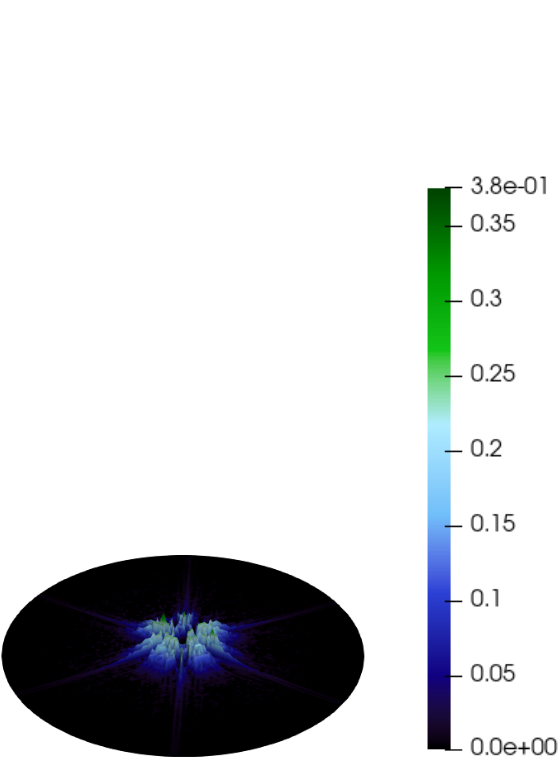}
		\caption*{$N=5$}
	\end{minipage}\hfill
	\begin{minipage}[t]{0.24\textwidth}
		\centering
		\includegraphics[width=\textwidth]{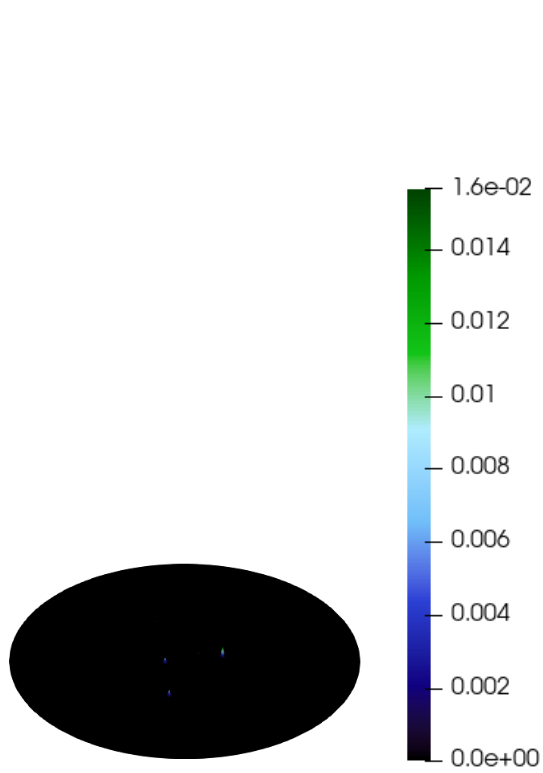}
		\caption*{$N=7$}
	\end{minipage}
	\caption{
		Pointwise differences $\lvert p_{\hFEM}^N(x,t_0) - p_{\hFEM}^{(10)}(x,t_0) \rvert$ for $a=10^5$ at a fixed time $t_0$ and different truncation levels $N=2,3,5,7$.
	}
	\label{fig:diff_A1e5}
\end{figure}

Figure~\ref{fig:diff_A1e5} visualizes the spatial structure of the truncation error for different truncation levels $N$ in the strongly nonlinear case $a=10^5$. For small values of $N$, the deviations from the reference solution are large and extend over significant parts of the domain, whereas increasing the number of retained harmonics leads to a rapid reduction and spatial localization of the differences. In particular, once approximately five harmonics are included, the remaining discrepancies become small throughout the domain, which is fully consistent with the harmonic-wise decay observed in Table~\ref{tab:relative_decay}. \\
\indent We now fix the truncation index to $N=10$ and examine the harmonic content of the \change{numerical} solution in more detail. To this end, we compute the $L^2(\Omega)$-norms $\|p^N_{\hFEM, m}\|_{L^2(\Omega)}$ 
of the harmonic coefficients and normalize them by the fundamental mode,
\[
r_m(a) := \frac{\|p^N_{\hFEM, m}\|_{L^2(\Omega)}}{\|p^N_{\hFEM,1}\|_{L^2(\Omega)}},
\]
for each driving amplitude $a$ and harmonic index $m$. \\
\indent From Table~\ref{tab:relative_decay} we observe that the relative amplitudes of higher harmonics decrease rapidly with increasing $m$, and that this decay becomes more pronounced as the driving amplitude $a$ is reduced, indicating a weaker nonlinear response. For $a=10^3$, the dominant contribution comes from the first two harmonics, while for $a=10^4$ the first three harmonics contribute noticeably. In the strongly nonlinear case $a=10^5$, appreciable contributions extend to higher harmonic indices, indicating that retaining at most four to five harmonics is appropriate to accurately represent the pressure field.

\begin{table}[h]
	\centering
	\begin{tabular}{c|cccc}
		\toprule
		$a$ &
		$r_2(a)$ &
		$r_3(a)$ &
		$r_4(a)$ &
		$r_5(a)$ \\
		\midrule
		$10^3$ & $2.23\cdot 10^{-4}$ & $8.11\cdot 10^{-8}$ & $2.94\cdot 10^{-11}$ & $1.15\cdot 10^{-14}$ \\
		$10^4$ & $2.23\cdot 10^{-3}$ & $8.11\cdot 10^{-6}$ & $2.94\cdot 10^{-8}$ & $1.15\cdot 10^{-10}$ \\
		$10^5$ & $2.23\cdot 10^{-2}$ & $8.11\cdot 10^{-4}$ & $2.94\cdot 10^{-5}$ & $1.15\cdot 10^{-6}$ \\
		\bottomrule
	\end{tabular}
	\caption{
		Relative $L^2(\Omega)$-norms of higher harmonics
		$r_m(a)$ for $m=2,\dots,5$ and $N=10$.
	}
	\label{tab:relative_decay}
\end{table}

\subsection{Comparison of the pressure field with and without bubbles}
Finally, we compare the pressure field obtained from the multiharmonic formulation in the presence and absence of bubble dynamics in order to assess how bubble coupling affects the nonlinear response of the system. We examine the pressure field in the time domain and the harmonic coefficients.

\begin{figure}[h]
	\centering
	\begin{subfigure}[t]{0.48\textwidth}
		\centering
		\includegraphics[width=0.78\textwidth]{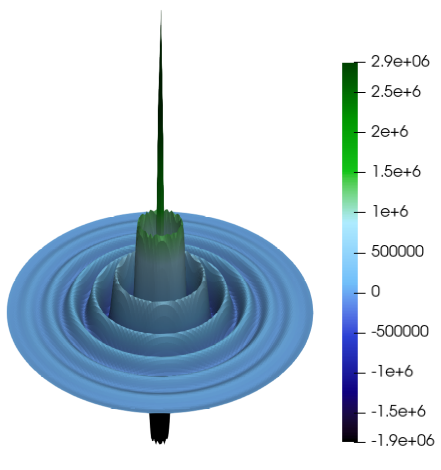}
		\caption{Pressure with bubbles in the medium.}
	\end{subfigure}\hfill
	\begin{subfigure}[t]{0.48\textwidth}
		\centering
		\includegraphics[width=0.78\textwidth]{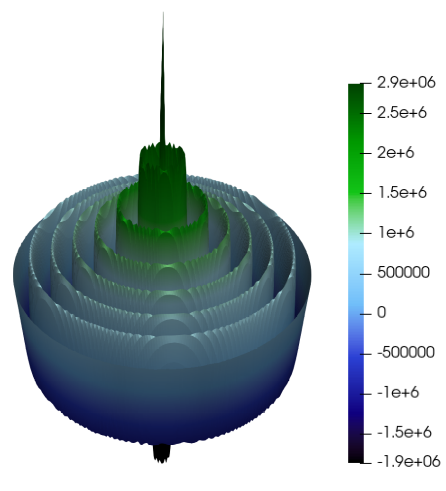}
		\caption{Pressure without bubbles in the medium.}
	\end{subfigure}
	\caption{
		Pressure field $\Re(p_{\hFEM}^{(5)}(x,t_0))$ at a fixed reference time $t_0$ for $a=10^5$, with and without bubbles in the medium.
	}
	\label{fig:bub_vs_nobub}
\end{figure}

Figure~\ref{fig:bub_vs_nobub} compares the real part of the pressure distribution within the domain at a specific time point with and without the presence of bubbles in the medium; that is, the real part of the pressure obtained using the algorithm in \eqref{two-harmonic: nonlinear} and the same algorithm with $n_0=\tilde{\beta}=0$. The presence of bubbles leads to an overall damping effect, resulting in a reduced pressure amplitude across the domain. This observation aligns with the behavior predicted by the system of equations in 
\eqref{two-harmonic: nonlinear}, as the modified speed of sound introduces attenuation. Although the source amplitude is fixed in this case, we note that the differences become more pronounced for larger source amplitudes, leading to higher overall pressure levels.

To quantify these differences, we compare the harmonic content of the pressure field in both settings. For a fixed truncation level $N=10$, we consider the $L^2(\Omega)$-norms of the pressure harmonics $\|p_{\hFEM,m}^N\|_{L^2(\Omega)}$ and report their relative contributions with respect to the fundamental mode, 
\[
r_m := \frac{\|p_{\hFEM,m}^N\|_{L^2(\Omega)}}{\|p_{\hFEM,1}^N\|_{L^2(\Omega)}}.
\]

Table~\ref{tab:bub_vs_nobub} shows that higher pressure harmonics have systematically larger relative amplitudes in the absence of bubbles. In particular, the decay of the relative harmonic ratios is significantly slower in the bubble-free case, with comparable contributions persisting over a broader range of harmonic indices. When bubble dynamics are included, the relative amplitudes of higher pressure harmonics decrease much more rapidly, dropping below $10^{-6}$ already around the fifth harmonic. In addition, the harmonic coefficients associated with the bubble variable $v_m$ are several orders of magnitude smaller than the corresponding pressure harmonics for all $m$, indicating that nonlinear effects transferred into the bubble dynamics are strongly damped.
\begin{table}[h]
	\centering
	\begin{tabular}{c|ccccc}
		\toprule
		& $r_2$ & $r_3$ & $r_4$ & $r_5$ & $r_6$ \\
		\midrule
		with bubbles 
		& $2.23\cdot10^{-2}$ & $8.11\cdot10^{-4}$ & $2.94\cdot10^{-5}$ 
		& $1.15\cdot10^{-6}$ & $1.13\cdot10^{-7}$ \\
		without bubbles 
		& $5.48\cdot10^{-2}$ & $1.78\cdot10^{-3}$ & $5.50\cdot10^{-5}$ 
		& $2.61\cdot10^{-6}$ & $2.64\cdot10^{-6}$ \\
		\bottomrule
	\end{tabular}
	\caption{
		Relative $L^2(\Omega)$-norms $r_m$ of the pressure harmonics for $a = 10^5$ with and without bubbles in the medium ($N=10$).
	}
	\label{tab:bub_vs_nobub}
\end{table}

We further examine the temporal evolution of the five-harmonic expansion at different spatial points in this context.

\begin{figure}[h] 
	\centering
	\begin{subfigure}{0.32\textwidth}
		\centering
		\includegraphics[width=\linewidth]{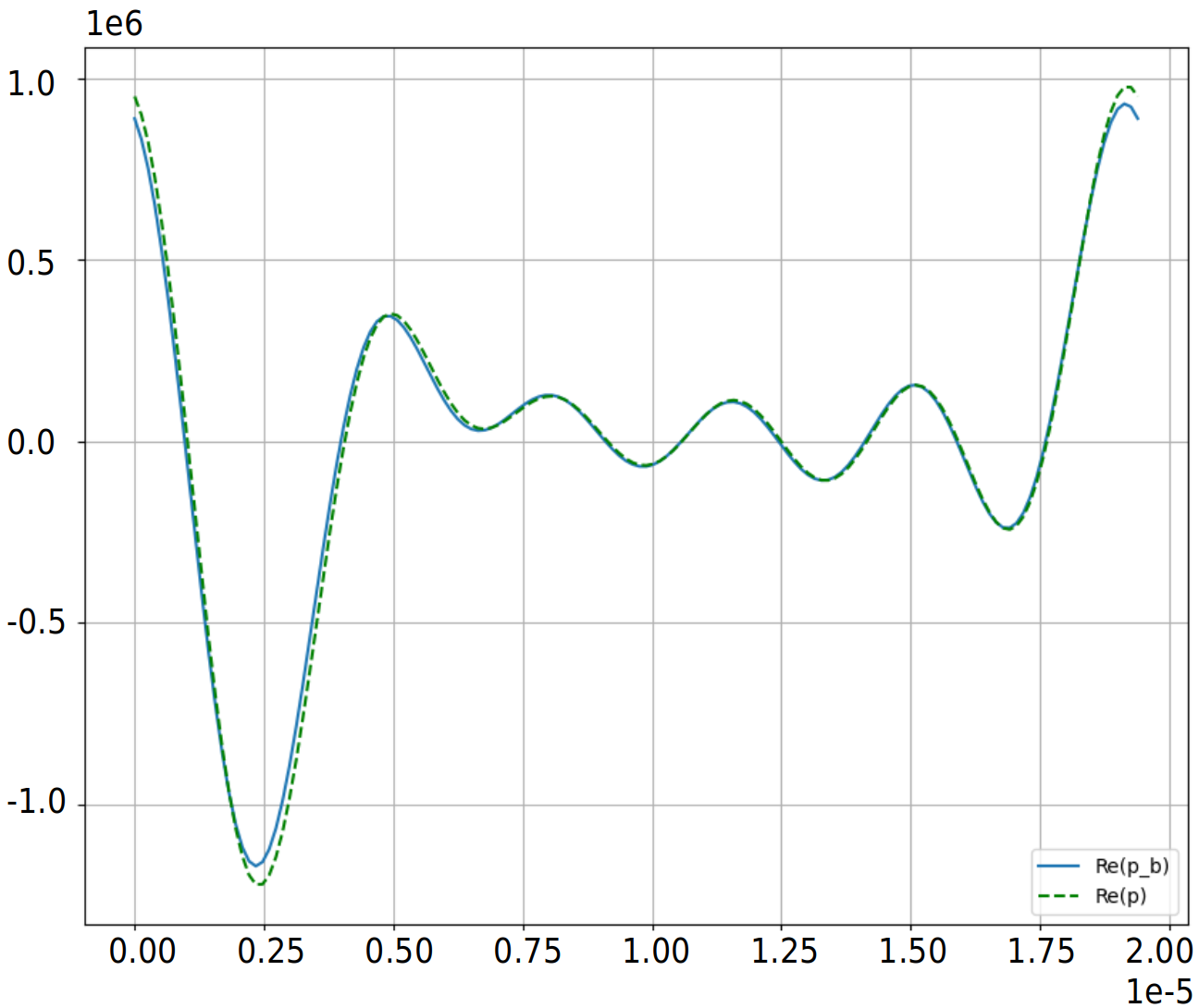}
		\caption{$\real({p_{\hFEM}^{(5)}((0,0),t)})$}
		\label{fig:image1}
	\end{subfigure}
	\hfill
	\begin{subfigure}{0.32\textwidth}
		\centering
		\includegraphics[width=\linewidth]{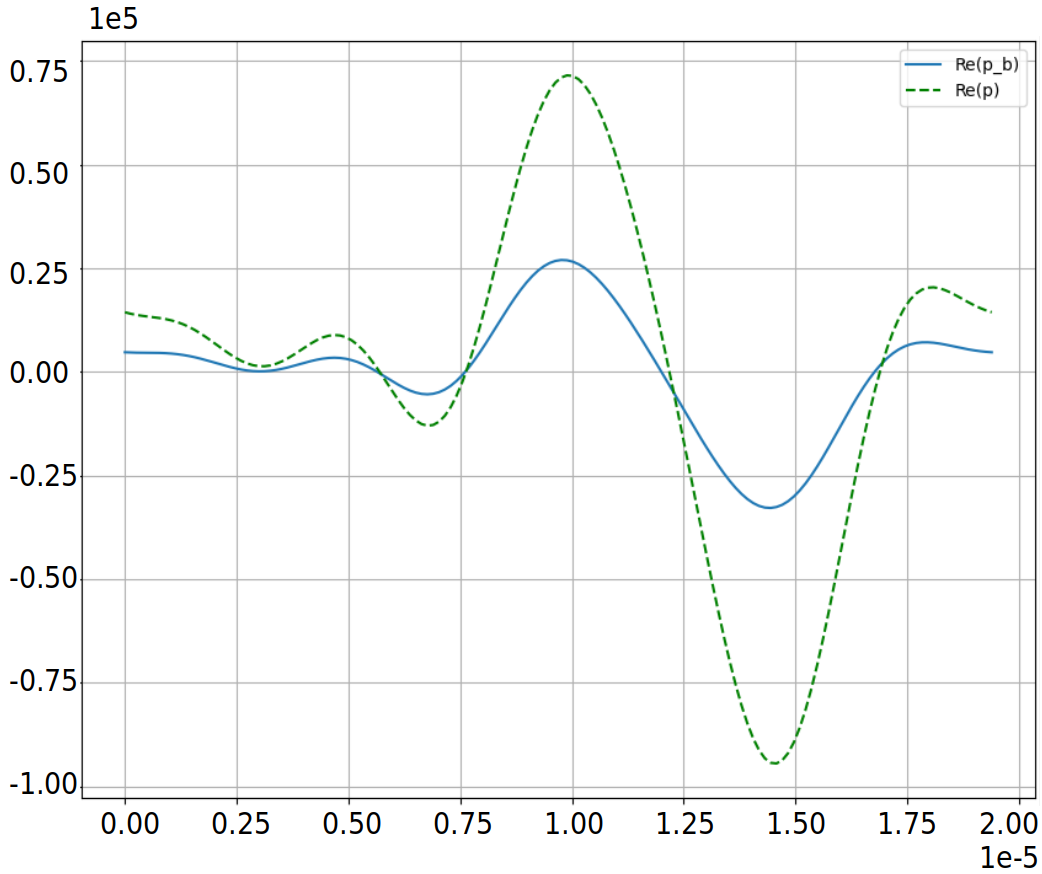}
		\caption{$\real({p_{\hFEM}^{(5)}((0,0.05),t)})$}
		\label{fig:image2}
	\end{subfigure}
	\hfill
	\begin{subfigure}{0.32\textwidth}
		\centering
		\includegraphics[width=\linewidth]{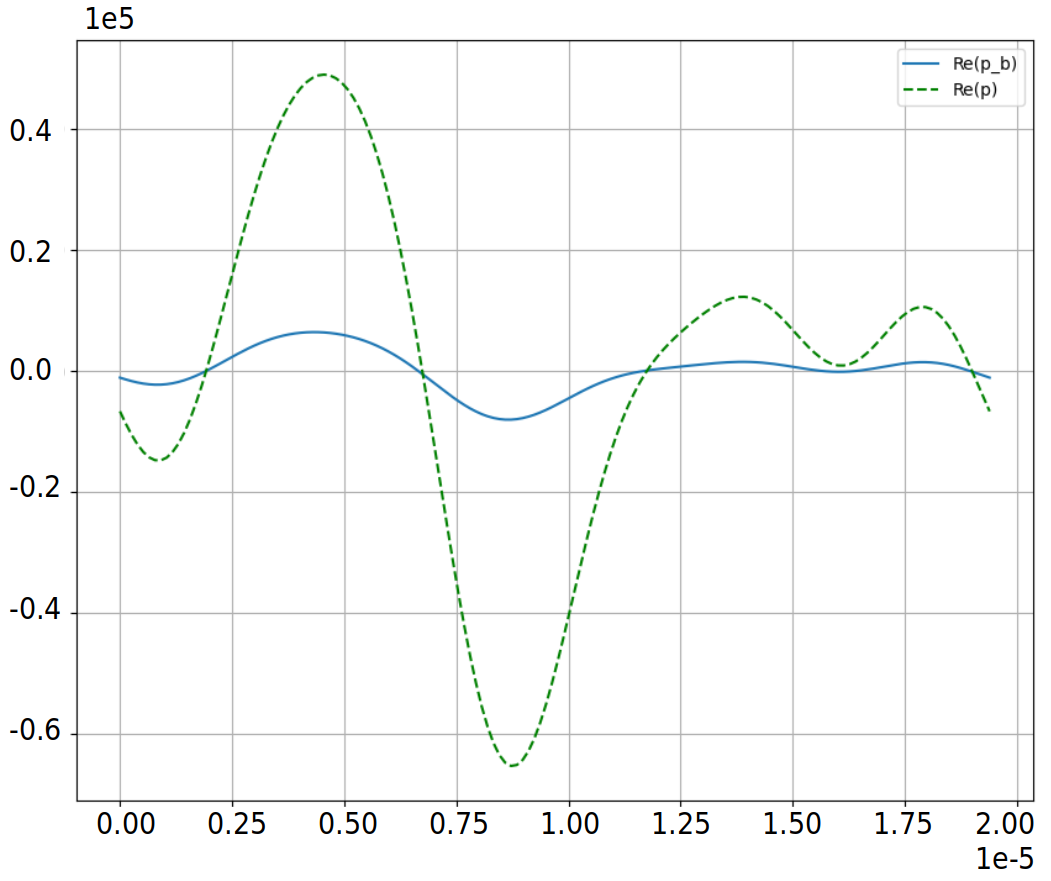}
		\caption{$\real({p_{\hFEM}^{(5)}((0,0.1),t)})$}
		\label{fig:image2}
	\end{subfigure}
	\vspace{-0.2cm}
	
	\caption{Real part of the pressure from the five-harmonic expansion computed using \eqref{system_v_0_0}  
		without bubbles ($n_0=0$, dashed green line) and with bubbles (blue line) over time at three spatial points.}
	\label{fig: 5harm_over_time}
\end{figure} \vspace*{-4mm}
\begin{figure}[h] 
	\centering
	\includegraphics[width=0.4\linewidth]{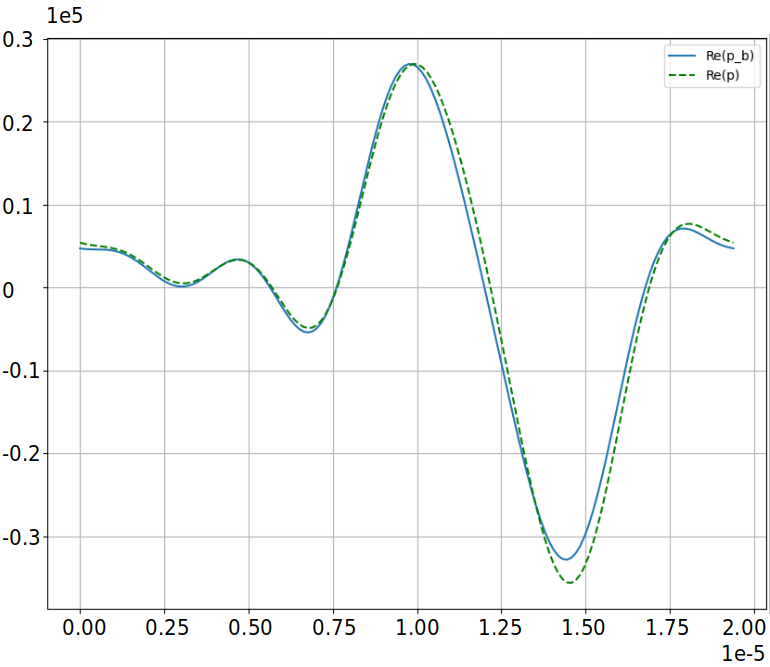}
	\caption{Real part of the pressure $\real({p_{\hFEM}^{(5)}(0,0.05,t)})$ from the five-harmonic  expansion computed using \eqref{system_v_0_0}  without bubbles ($n_0=0$, dashed green line) and with bubbles (blue line, scaled to the maximum of the green curve) over time.}
	\label{fig: zoom_scaled_5harm}
\end{figure}
As illustrated in Figure \ref{fig: 5harm_over_time}, the presence of microbubbles near the source has little effect on the pressure waves. However, as the distance from the source increases, the signal strength decreases. The microbubbles not only attenuate the waveform but also enhance its nonlinear characteristics. To better visualize these effects, we scale the pressure waveform obtained from the five-harmonic expansion with bubbles to the maximum of the waveform without microbubbles, as shown in Figure \ref{fig: zoom_scaled_5harm}.

\subsection{Conclusion}

The numerical results demonstrate that microbubbles introduce significant attenuation and phase shifts to the wave propagation, particularly at greater distances from the source. The impact increases with microbubble density and source amplitude, and is further influenced by the driving frequency. 
Regarding the number of harmonics $N$, in the considered numerical settings using three to five harmonics within the simplified multiharmonic framework obtained from complex fields provides a good compromise between computational efficiency and accuracy. This relatively low number of harmonics makes the approach promising for use in practical applications.

\section*{Outlook}

In practice, nonlinear acoustic interactions between microbubbles and ultrasound waves generate not only harmonics, which are frequency components at integer multiples of the driving frequency, but also subharmonics, which appear at fractional multiples of the driving frequency, such as $\frac{\omega}{2}$, $\frac{\omega}{3}, \, \ldots$; see, e.g.,~\cite{krishna1999subharmonic}. Subharmonics primarily appear due to non-spherical deformations and multibubble interactions. While these effects are not included in our current model, extending the framework to incorporate them would be an interesting direction for future research. \\
\indent Expanding the present theoretical and numerical framework to explore  other phenomena relevant to applications of focused ultrasound waves is also of practical interest. For instance,  localized heating, cavitation, and nonlocal attenuation play an important role in therapeutic ultrasound applications such as targeted drug delivery, and it would be worthwhile to investigate the potential role of multiharmonic expansions in these modeling contexts. 

Furthermore,  investigating inverse problems related to reconstructing spatially varying parameters, such as the bubble number density $n_0=n_0(x)$ or the nonlinearity parameter $\beta_a=\beta_a(x)$ from measured acoustic signals is an important task in the context of contrast-enhanced ultrasound imaging,  as it could help improve diagnostic accuracy in the long run.

\subsection*{Acknowledgments} We are thankful to Prof.\ Barbara Kaltenbacher (University of Klagenfurt) for her valuable comments. \teresa{We also thank the reviewer for their careful reading and helpful comments, which have led to an improved version of the manuscript.} This research was funded in part by the Austrian Science Fund (FWF) [10.55776/DOC78]. The work of V.N.  was partially supported by the Dutch Research Council (NWO) under the grant OCENW.M.23.371 \sloppy with Grant ID \href{https://doi.org/10.61686/VLHHB85047 }{https://doi.org/10.61686/VLHHB85047}.

For open-access purposes, the authors have applied a CC BY public copyright license to any author-accepted manuscript version arising from this submission.

\appendix
	\section{Proof of Lemma~\ref{lemma: ode}} \label{proof Lemma}
We present here the postponed proof of Lemma~\ref{lemma: ode}.	
\begin{proof}
	We can write the linearized second-order ODE in the matrix system form with $V=[v \ \vt]^T$:
	\begin{equation} \label{matrix ode}
		\begin{aligned}
			&V_t =\, AV +  F, \quad A= \begin{bmatrix}
				0 & 1 \\ -\omega_0^2 & -\delta \omega_0
			\end{bmatrix},\ F=\begin{bmatrix}
				0 \\ f
			\end{bmatrix}
		\end{aligned}
	\end{equation}
	a.e. in space.	Let $\vone$ and $\vtwo$ be to linearly independent solutions of the homogeneous equation $ \vtt + \delta \omega_0 \vt + \omega_0^2 v =0$. 
	\begin{equation}
		\begin{aligned}
			(\vone, \vtwo) = \begin{cases}
				(e^{r_1 t}, e^{r_2 t}), \ r_{1,2} = \frac{- \delta \omega_0\pm \sqrt{\delta^2 \omega_0^2 - 4 \omega_0^2}}{2} \quad &\text{if} \quad \delta \omega_0 > 2 \omega_0, \\
				(e^{-\omega_0 t}, t e^{-\omega_0 t}), \   \quad &\text{if} \quad \delta \omega_0 = 2 \omega_0, \\
				e^{- \delta \omega_0 t/2}(\cos \alpha t,\, \sin \alpha t), \ \alpha = \frac{ \sqrt{ 4 \omega_0^2-\delta^2 \omega_0^2}}{2}  \quad &\text{if} \quad \delta \omega_0 < 2 \omega_0.
			\end{cases}
		\end{aligned}
	\end{equation}
	~
	
	\noindent (i) Let first $\delta \omega_0 > 2 \omega_0$. Then the fundamental matrix is given by
	\begin{equation}
		\begin{aligned}
			Z(t) 			=\, \begin{bmatrix}
				\vone & \vtwo \\ (\vone)' \quad & (\vtwo)'
			\end{bmatrix}=\, \begin{bmatrix}
				e^{r_1 t} & e^{r_2 t} \\  r_1 e^{r_1 t} & r_2 e^{r_2 t}
			\end{bmatrix}, 
		\end{aligned}
	\end{equation}
	and we have
	\begin{equation}
		\begin{aligned}
			\left(Z(t) \right)^{-1} = \frac{1}{r_2-r_1}e^{-(r_1+r_2)t} \begin{bmatrix}
				r_2 e^{r_2 t} & -e^{r_2 t} \\ -r_1 e^{r_1 t} \ & e^{r_1 t}
			\end{bmatrix},
		\end{aligned}
	\end{equation}
	where $r_1-r_2 = \sqrt{\delta^2 \omega_0^2-4 \omega_0^2}$ and $r_1+r_2 = -\delta \omega_0$. Then the principal fundamental matrix is
	\begin{equation}
		\begin{aligned}
			X(t) = Z(t) \left(Z(0)\right)^{-1} =&\, \frac{1}{r_2-r_1} \begin{bmatrix}
				e^{r_1 t} & e^{r_2 t} \\  r_1 e^{r_1 t} & r_2 e^{r_2 t}
			\end{bmatrix}  \begin{bmatrix}
				r_2  & -1 \\ -r_1  \ & 1
			\end{bmatrix} \\
			=&\, \frac{1}{r_2-r_1}  \begin{bmatrix}			
				r_2 e^{r_1t}-r_1 e^{r_2 t} & -e^{r_1t}+e^{r_2t}\\ r_1 r_2 (e^{r_1 t} -e^{r_2t}) & -r_1 e^{r_1t} +r_2 e^{r_2 t}
			\end{bmatrix}.
		\end{aligned}
	\end{equation}
	Note that $X_t =A X$.	Furthermore,
	\begin{equation}
		\begin{aligned}
			(X(t))^{-1} = 	 \frac{1}{r_2-r_1}    \begin{bmatrix}
				r_2 e^{-r_1 t}-r_1 e^{-r_2t} & 	-e^{-r_1t}+e^{-r_2t} \\ r_1 r_2 (e^{-r_1 t} - e^{-r_2t}) & 	-r_1 e^{-r_1t}+r_2 e^{-r_2 t}
			\end{bmatrix}.
		\end{aligned}
	\end{equation}
	Then using the method of  variation of parameters, we find that the solution of \eqref{matrix ode} has the form
	\begin{equation} \label{sol}
		\begin{aligned}
			V(t) = X(t)\left(V(0)+ \intt \left(X(\tau)\right)^{-1}f(\tau)\dtau \right).
		\end{aligned}
	\end{equation}
	Since periodic solutions must satisfy $V(0)=V(T)$, we then conclude that
	\begin{equation}
		\begin{aligned}
			(I-X(T))	V(0)= X(T) \int_0^T \left(X(\tau)\right)^{-1}f(\tau)\dtau.
		\end{aligned}
	\end{equation}
	We have
	\begin{equation}
		\det (I- X(T)) = 1 - (e^{r_1 T}+ e^{r_2 T}) + e^{(r_1 + r_2)T} = (1-e^{r_1 T })(1-e^{r_2 T })\neq 0.
	\end{equation}
	Therefore, since the matrix $I-X(T)$ is invertible, we find that
	\begin{equation} \label{ic}
		\begin{aligned}
			V(0)= (I-X(T))^{-1}X(T) \int_0^T \left(X(\tau)\right)^{-1}f(\tau)\dtau.
		\end{aligned}
	\end{equation}
	Furthermore, from \eqref{sol} and \eqref{ic}, we have the following bound:
	\begin{equation}
		\begin{aligned}
			\|V\|_{L^\infty(\Linf)}+\|V_t\|_{L^2(\Linf)} \lesssim \|f\|_{L^2(\Linf)}.
		\end{aligned}
	\end{equation}	
	~
	
	\noindent {(ii)} Let next $\delta \omega_0 = 2 \omega_0$. Then the fundamental matrix is given by
	\begin{equation}
		\begin{aligned}
			Z(t) =e^{-\omega_0 t} \begin{bmatrix}
				1 & t \\ -\omega_0 \ & 1-\omega_0 t
			\end{bmatrix}, \quad  \left(Z(t) \right)^{-1} = e^{\omega_0 t} \begin{bmatrix}
				1- \omega_0 t & -t \\ \omega_0 \ & 1
			\end{bmatrix}.
		\end{aligned}
	\end{equation}
	Then
	\begin{equation}
		\begin{aligned}
			X(t) = e^{-\omega_0 t} \begin{bmatrix}
				1 & t \\ -\omega_0 \ & 1-\omega_0 t
			\end{bmatrix} \begin{bmatrix}
				1 & 0 \\ \omega_0 \ & 1
			\end{bmatrix} =  e^{-\omega_0 t} \begin{bmatrix}
				1+ t\omega_0 \ & t \\ - \omega_0^2 t \ & 1- \omega_0 t
			\end{bmatrix}
		\end{aligned}
	\end{equation}
	and
	\begin{equation}
		\begin{aligned}
			(X(t))^{-1} = 	e^{\omega_0 t} \begin{bmatrix}
				1- \omega_0 t & -t \\ \omega_0^2 t \ & 1+ \omega_0 t
			\end{bmatrix}.
		\end{aligned}
	\end{equation}
	Then using the method of  variation of parameters, we find that the solution has the form
	\begin{equation}
		\begin{aligned}
			V(t) = X(t)\left(V(0)+ \intt \left(X(\tau)\right)^{-1}f(\tau)\dtau \right).
		\end{aligned}
	\end{equation}
	Since periodic solutions must satisfy $V(0)=V(T)$, we then have
	\begin{equation}
		\begin{aligned}
			(I-X(T))	V(0)= X(T) \int_0^T \left(X(\tau)\right)^{-1}f(\tau)\dtau
		\end{aligned}
	\end{equation}
	and 
	\begin{equation}
		\det (I- X(T)) = (1-e^{-\omega_0 T})^2 > 0,
	\end{equation}
	so we can reason similarly to the first case. \vspace*{2mm}
	
	\noindent{(iii)}: Lastly, let $\delta \in (0, 2)$. The fundamental matrix is then given by 
	\begin{equation}
		\begin{aligned}
			Z(t) 
			=&\, e^{-\delta \omega_0 t/2} \begin{bmatrix}
				\cos \alpha t & \sin \alpha t \\ - \frac{\delta \omega_0}{2} \cos \alpha t - \alpha \sin \alpha t \quad & -\frac{\delta \omega_0}{2} \sin \alpha t+ \alpha \cos \alpha t
			\end{bmatrix}.
		\end{aligned}
	\end{equation}
	We have
	\begin{equation}
		\begin{aligned}
			\left(Z(0)\right)^{-1} 
			=&\, \frac{1}{\alpha} \begin{bmatrix}
				\alpha 	 & 0\\  \frac{\delta \omega_0}{2} \quad &1
			\end{bmatrix}.
		\end{aligned}
	\end{equation}
	Let $X(t) = Z(t) (Z(0))^{-1}$, we have
	\begin{equation}
		\begin{aligned}
			X(t) =&\,   \frac{1}{\alpha} e^{-\delta \omega_0 t/2} \begin{bmatrix}
				\cos \alpha t & \sin \alpha t \\ - \frac{\delta \omega_0}{2} \cos \alpha t - \alpha \sin \alpha t & -\frac{\delta \omega_0}{2} \sin \alpha t+ \alpha \cos \alpha t
			\end{bmatrix} \begin{bmatrix}
				\alpha & 0 \\ \frac{\delta \omega_0}{2} & 1
			\end{bmatrix} \\
			=&\, \frac{1}{\alpha} e^{- \delta \omega_0 t /2} \begin{bmatrix}
				\alpha \cos \alpha t + \frac{\delta \omega_0}{2} \sin \alpha t & \sin \alpha t\\
				( - \alpha^2 - \frac{\delta^2 \omega^2_0}{4} )\sin \alpha t \qquad & \alpha \cos \alpha t - \frac{\delta \omega_0}{2} \sin \alpha t
			\end{bmatrix}
		\end{aligned}
	\end{equation}
	and 
	\begin{equation}
		(X(t))^{-1} = \frac{1}{\alpha}e^{\delta \omega_0 t/2} \begin{bmatrix}
			\alpha \cos \alpha t - \frac{\delta \omega_0}{2} \sin \alpha t	 & -\sin \alpha t \\ 
			(\alpha^2 + \frac{\delta^2 \omega_0^2}{4})\sin \alpha t \quad & \alpha \cos \alpha t + \frac{\delta \omega_0}{2} \sin \alpha t
		\end{bmatrix}.
	\end{equation}
	Then again using the method of  variation of parameters, we find that the solution has the form
	\begin{equation}
		\begin{aligned}
			V(t) = X(t)\left(V(0)+ \intt \left(X(\tau)\right)^{-1}f(\tau)\dtau \right),
		\end{aligned}
	\end{equation}
	and
	\begin{align}
		\det (I- X(T)) &= 1+e^{-T \delta \omega_0}-2 e^{-T\frac{\delta \omega_0}{2}}\cos \alpha T = (e^{-T\frac{\delta \omega_0}{2}}-\cos \alpha T)^2+ \sin^2 \alpha T >0
	\end{align}
	since $\delta>0$. Since periodic solutions must satisfy $V(0)=V(T)$, we then have
	\begin{equation}
		\begin{aligned}
			V(0)= (I-X(T))^{-1}X(T) \int_0^T \left(X(\tau)\right)^{-1}f(\tau)\dtau,
		\end{aligned}
	\end{equation}
	and arrive at the desired estimate in the same manner as before.\qed
\end{proof}

\section{Proof of the cut-off error estimate \eqref{conv Fourier}} \label{Appendix: Cutoff est}
We provide here the proof of the error estimate
\begin{equation} \tag{\ref{conv Fourier}}
	\begin{aligned}
|\!|\!|(p-\tpN, v-\tvN)|\!|\!|_{\left(\Xplowzero \cap H^2(\Hone)\right)\times \Xvlow} 
		\leq\, C N^{-\ell} (\|p\|_{Y_p^{\ell}\cap H^{\ell+2}(0,T; \Hone)}+\|v\|_{Y_v^{\ell}}), \quad \ell \geq 1,
	\end{aligned}
\end{equation}
for $(p, v) \in \left(\Xplowell \cap H^{\ell+2}(0,T; \Hone)\right) \times \Xvlowell$, which follows by a straightforward modification of the arguments in~\cite[Lemma 12]{bachinger2005numerical}.

\begin{proof}[proof of \eqref{conv Fourier}]
	We can express the norms as follows:
	\begin{equation}
		\begin{aligned}
			\|p\|^2_{\Xplowzero  \cap H^2(\Hone) } =&\,\begin{multlined}[t] \int_0^T \Bigr(\|\nabla p\|^2_{\Ltwo} + \|\nabla \pt\|^2_{\Ltwo}+ \|\nabla \ptt\|^2_{\Ltwo}\\\
 +\|p\|^2_{\Ltwo}+ \|\pt\|^2_{\Ltwo} + \|\ptt\|^2_{\Ltwo} \\
 +\|p\|^2_{L^2(\partial \Omega)}+ \|\pt\|^2_{L^2(\partial \Omega)} + \|\ptt\|^2_{L^2(\partial \Omega)} \Bigr) \dt
				\end{multlined} \\
			=&\,\frac{T}{2} \begin{multlined}[t] \sum_{m=0}^\infty \Bigl\{ \left(1+ (m \omega)^2+ (m \omega)^4\right) \left(\|\nabla p_m^c\|^2_{\Ltwo}+\|\nabla p_m^s\|^2_{\Ltwo}\right)\\
				+\left(1+ (m \omega)^2+ (m \omega)^4\right) \left(\| p_m^c\|^2_{\Ltwo}+\| p_m^s\|^2_{\Ltwo} \right)\\
				+\left(1+ (m \omega)^2+ (m \omega)^4\right) \left(\| p_m^c\|^2_{L^2(\partial \Omega)}+\| p_m^s\|^2_{L^2(\partial \Omega)} \right)
				\Bigr\}
				\end{multlined}
		\end{aligned}
	\end{equation}
and, similarly,	
	\begin{equation}
	\begin{aligned}
		\|v\|^2_{\Xvlowzero }
		=&\,\frac{T}{2} \begin{multlined}[t] \sum_{m=0}^\infty \Bigl\{ 
			\left(1+ (m \omega)^2+ (m \omega)^4\right) \left(\| v_m^c\|^2_{\Ltwo}+\| v_m^s\|^2_{\Ltwo} \right)
			\Bigr\}.
		\end{multlined}
	\end{aligned}
\end{equation}
Therefore, the errors are given by
\begin{equation}
	\begin{aligned}
		\|p-\tpN\|^2_{\Xplowzero \cap H^2(\Hone)}=&\,\frac{T}{2} \begin{multlined}[t] \sum_{m=N+1}^\infty \Bigl\{ \left(1+ (m \omega)^2+ (m \omega)^4\right) \left(\|\nabla p_m^c\|^2_{\Ltwo}+\|\nabla p_m^s\|^2_{\Ltwo}\right)\\
			+\left(1+ (m \omega)^2+ (m \omega)^4\right) \left(\| p_m^c\|^2_{\Ltwo}+\| p_m^s\|^2_{\Ltwo} \right)\\
			+\left(1+ (m \omega)^2+ (m \omega)^4\right) \left(\| p_m^c\|^2_{L^2(\partial \Omega)}+\| p_m^s\|^2_{L^2(\partial \Omega)} \right)
			\Bigr\}
		\end{multlined}
	\end{aligned}
\end{equation}
and
	\begin{equation}
	\begin{aligned}
		\|v-\tvN\|^2_{\Xvlowzero }
		=&\,\frac{T}{2} \begin{multlined}[t] \sum_{m=N+1}^\infty \Bigl\{ 
			\left(1+ (m \omega)^2+ (m \omega)^4\right) \left(\| v_m^c\|^2_{\Ltwo}+\| v_m^s\|^2_{\Ltwo} \right)
			\Bigr\}.
		\end{multlined}
	\end{aligned}
\end{equation}
We then have the estimate
\begin{equation}
	\begin{aligned}
			&\|p-\tpN\|^2_{\Xplowzero  \cap H^2(\Hone)} \\
			\leq&\,   \frac{T}{2}\tilde{C} \max_{m \geq N+1} \frac{1}{1+(m \omega)^{2\ell}}\sum_{m=N+1}^\infty \Bigl\{ \left(1+ (m \omega)^2+\ldots+(m \omega)^{2\ell+4}\right) \left(\|\nabla p_m^c\|^2_{\Ltwo}+\|\nabla p_m^s\|^2_{\Ltwo}\right)\\
			&\hspace*{1cm}	+\left(1+ (m \omega)^2+ (m \omega)^4+\ldots+(m \omega)^{2\ell+4}\right) \left(\| p_m^c\|^2_{\Ltwo}+\| p_m^s\|^2_{\Ltwo} \right)\\
		&\hspace*{1.5cm}		+\left(1+ (m \omega)^2+ (m \omega)^4+\ldots+(m \omega)^{2\ell+4}\right) \left(\| p_m^c\|^2_{L^2(\partial \Omega)}+\| p_m^s\|^2_{L^2(\partial \Omega)} \right)
				\Bigr\}
	\end{aligned}
\end{equation}
for some $\tilde{C}>0$, independent of $N$. From here, we conclude that
\begin{equation}
	\begin{aligned}
		\|p-\tpN\|^2_{\Xplowzero  \cap H^2(\Hone)} 
		\leq&\,  \frac{T}{2} \tilde{C} \frac{1}{1+((N+1) \omega)^{2\ell}} \|p\|^2_{\Xplow^\ell \cap H^{\ell+2}(\Hone)}\\[1mm] 
		\leq&\, C N^{-2\ell} \|p\|^2_{\Xplow^\ell \cap H^{\ell+2}(\Hone)}.
	\end{aligned}
\end{equation}
for some $C>0$, independent of $N$. We can analogously derive the estimate
\begin{equation}
	\begin{aligned}
		\|v-\tvN\|^2_{\Xplowzero }  \leq \tilde{C} N^{-2\ell} \|v\|^2_{\Xvlow^\ell},
	\end{aligned}
\end{equation}
to conclude the proof.\qed
\end{proof}

\section{Proof of estimate~\eqref{error est}}  \label{Appendix: Proof error energy est}
We provide here the proof of \eqref{error est}. We start from
 \begin{equation} \label{1}
	\begin{aligned}
		<\calL_1 \errpN, \phiN> 
		=&\, \begin{multlined}[t]-\intTO \fN \phiN \dxs + <\calL_1 \errprojNp, \phiN>
		\end{multlined}
	\end{aligned}
\end{equation}	
with the short-hand notation
\begin{equation}
	\fN = \eta (p^2-(\pNone)^2)_{tt} + c^2 \rho_0 n_0 (\vtt-\vttN).
\end{equation}
The proof follows by testing \eqref{1} with $\errpN$, $\errpN_t$, and $\errpN_{tt}$ and exploiting the time-periodic conditions. \\[2mm]

\noindent \underline{Step I}: Testing \eqref{1} with $\errpN$ leads to
\begin{equation}
	\begin{aligned}
		&c^2 \|\nabla \errpN\|^2_{\LtwoLtwo} + c^2\gamma \|\errpN\|^2_{L^2(L^2(\partial \Omega))} \\
		=&\, \begin{multlined}[t]
			- \intTO \errpN_{tt} \errpN \dxs 	- \intT \int_{\partial \Omega} b \beta \errpN_{tt} \errpN \dGs 
			-\intTO \fN \errpN \dxs \\+ <\calL_1 \errprojNp, \errpN>.
		\end{multlined}
	\end{aligned}
\end{equation}
Above, we have relied on the fact that
\begin{equation}
	 \int_0^T \int_{\Omega} b \nabla \errpN_t \cdot \nabla \errpN \dxs =0 \quad \text{and} \quad  \intT \int_{\partial \Omega}(  c^2 \beta+b \gamma)\errpN_t \errpN \dGs =0
\end{equation}
due to time-periodic conditions. We can further use integration by parts in time together with $\errpN(0)=\errpN(T)$ and $\errpN_t(0)=\errpN_t(T)$ to conclude that
\begin{equation}
	\begin{aligned}
			&- \intTO \errpN_{tt} \errpN \dxs 	- \intT \int_{\partial \Omega} b \beta \errpN_{tt} \errpN \dGs \\
			=&\,  \|\errpN_{t}\|^2_{\LtwoLtwo} 	+  b \beta \|\errpN_{t}\|^2_{L^2(L^2(\Omega))}.
	\end{aligned}
\end{equation}
Furthermore, we have 
\begin{equation}
	\begin{aligned}
		&<\calL_1 \errprojNp, \errpN> \\
		\coloneqq&\, \begin{multlined}[t]	\intTO \left((\errprojNp)_{tt} \errpN + c^2 \nabla \errprojNp \cdot \nabla \errpN+ b \nabla (\errprojNp)_t \cdot \nabla \errpN \right)\dxs \\
		+ \intT \int_{\partial \Omega} (c^2(\beta (\errprojNp)_t+\gamma \errprojNp)+b(\beta (\errprojNp)_{tt}+\gamma (\errprojNp)_t))\errpN \dGs. 
	\end{multlined}
	\end{aligned}
\end{equation}
and thus using H\"older's and Young's inequalities leads to
\begin{equation}
	\begin{aligned}
&\left|	<\calL_1 \errprojNp, \errpN> \right| \\
\lesssim&\, \begin{multlined}[t]
\eps \|\errpN\|^2_{\LtwoLtwo} + \eps \|\nabla \errpN\|^2_{\LtwoLtwo} + \eps \|\nabla \errpN_t\|^2_{\LtwoLtwo} +\eps \|\errpN\|^2_{L^2(L^2(\partial \Omega))}\\
+	\|(\errprojNp)_{tt}\|^2_{L^2(\Ltwo)} +	\|\nabla \errprojNp \|^2_{H^1(\Ltwo)}
+\|\errprojNp\|^2_{H^2(L^2(\partial \Omega))}
	\end{multlined}
	\end{aligned}
\end{equation}
for any $\eps>0$. Similarly,
\begin{equation}
	\left| -\intTO \fN \errpN \dxs \right| \lesssim \|\fN\|^2_{\LtwoLtwo} +\eps \|\errpN\|^2_{\LtwoLtwo}.
\end{equation}
Thus, for $\eps$ sufficiently small, the outcome of testing in Step I can be formulated as
\begin{equation}
	\begin{aligned}
		 \|\nabla \errpN\|^2_{\LtwoLtwo} + \|\errpN\|^2_{L^2(L^2(\partial \Omega))} 
	\lesssim&\, \begin{multlined}[t]
 \eps \|\errpN\|^2_{\LtwoLtwo}  + \eps \|\nabla \errpN_t\|^2_{\LtwoLtwo} \\
 + \|\fN\|^2_{\LtwoLtwo}+ \|\errprojNp\|^2_{\Xplowzero}.
		\end{multlined}
	\end{aligned}
\end{equation}
Further using the equivalence of $\|\nabla w\|^2_{\Ltwo}+\|w\|^2_{L^2(\partial \Omega)}$ and $\|w\|^2_{\Hone}$ yields for small $\eps$
\begin{equation}
	\begin{aligned}
		\| \errpN\|^2_{L^2(\Hone)} 
		\lesssim&\, \begin{multlined}[t]
			 \eps \|\nabla \errpN_t\|^2_{\LtwoLtwo} 
			+ \|\fN\|^2_{\LtwoLtwo}+ \|\errprojNp\|^2_{\Xplowzero}.
		\end{multlined}
	\end{aligned}
\end{equation}
~\\
\noindent \underline{Step II}: Testing \eqref{1} with $\errpN_t$ and exploiting time-periodicity leads to
\begin{equation}
	\begin{aligned}
		&b \|\nabla \errpN_t\|^2_{\LtwoLtwo} + (c^2+\gamma)\beta \|\errpN_t\|^2_{L^2(L^2(\partial \Omega))} \\
		=&\, \begin{multlined}[t]
			-\intTO \fN \errpN_t \dxs + <\calL_1 \errprojNp, \errpN_t>.
		\end{multlined}
	\end{aligned}
\end{equation}
Then proceeding similarly to Step I results in 
\begin{equation}
	\begin{aligned}
	 \| \errpN_t\|^2_{L^2(\Hone)} \lesssim  \|\fN\|^2_{\LtwoLtwo}+ \|\errprojNp\|^2_{\Xplowzero}.
	\end{aligned}
\end{equation}
~\\
\noindent \underline{Step III}: Testing \eqref{1} with $\errpN_{tt}$ and exploiting time-periodicity leads to
\begin{equation}
	\begin{aligned}
		&b \| \errpN_{tt}\|^2_{\LtwoLtwo} + b \beta \|\errpN_{tt}\|^2_{L^2(L^2(\partial \Omega))} \\
		=&\, \begin{multlined}[t]
			- c^2 \intTO \nabla \errpN \cdot \nabla \errpN_{tt} - c^2 \gamma \intT \int_{\partial \Omega}  \errpN  \errpN_{tt} \dGs 
			-\intTO \fN \errpN_{tt} \dxs \\+ <\calL_1 \errprojNp, \errpN_{tt}>.
		\end{multlined}
	\end{aligned}
\end{equation}
By time-periodicity, we have
\begin{equation}
	\begin{aligned}
	& - c^2 \intTO \nabla \errpN \cdot \nabla \errpN_{tt} - c^2 \gamma \intT \int_{\partial \Omega}  \errpN  \errpN_{tt} \dGs\\
	=&\, c^2 \|\nabla \errpN_t\|^2_{\LtwoLtwo} +c^2 \gamma \|\errpN_t\|^2_{L^2(L^2(\partial \Omega))}.
	\end{aligned}
\end{equation}
Furthermore, we have 
\begin{equation}
	\begin{aligned}
		&<\calL_1 \errprojNp, \errpN_{tt} > \\
		\coloneqq&\, \begin{multlined}[t]	\intTO \left((\errprojNp)_{tt} \errpN_{tt} + c^2 \nabla \errprojNp \cdot \nabla \errpN_{tt} + b \nabla (\errprojNp)_t \cdot \nabla \errpN_{tt}  \right)\dxs \\
			+ \intT \int_{\partial \Omega} (c^2(\beta (\errprojNp)_t+\gamma \errprojNp)+b(\beta (\errprojNp)_{tt}+\gamma (\errprojNp)_t))\errpN_{tt}  \dGs. 
		\end{multlined}
	\end{aligned}
\end{equation}
We integrate by parts in time in the $c^2$ and $b$ terms on the left-hand side:
\begin{equation}
	\begin{aligned}
	& \intTO \left( c^2 \nabla \errprojNp \cdot \nabla \errpN_{tt} + b \nabla (\errprojNp)_t \cdot \nabla \errpN_{tt}  \right)\dxs \\
	=&\, -\intTO \left( c^2 \nabla (\errprojNp)_t \cdot \nabla \errpN_{t} + b \nabla (\errprojNp)_{tt} \cdot \nabla \errpN_{t}  \right)\dxs 
	\end{aligned}
\end{equation}
and then estimate
\begin{equation}
	\begin{aligned}
		\left| <\calL_1 \errprojNp, \errpN_{tt} > \right| 	\lesssim&\, \begin{multlined}[t] \eps \|\errpN_{tt}\|^2_{\LtwoLtwo} + \eps \|\nabla \errpN_t\|^2_{\LtwoLtwo}+ \eps \|\errpN_{tt}\|^2_{L^2(L^2(\partial \Omega))}  \\
			+ \|(\errprojNp)_{tt}\|^2_{\LtwoLtwo}+ \|\nabla (\errprojNp)_{t}\|^2_{\LtwoLtwo}\\ +  \|\nabla (\errprojNp)_{tt}\|^2_{\LtwoLtwo} 
			+\|\errprojNp\|^2_{H^2(\partial \Omega)}.
		\end{multlined}
	\end{aligned}
\end{equation}
The presence of the term $\|\nabla (\errprojNp)_{tt}\|^2_{\LtwoLtwo} $ above is the reason for assuming $p \in H^{\ell+2}(0,T; \Hone)$ (in addition to $p \in \Xp \cap \Xplowell$). We also have
\begin{equation}
	\left| -\intTO \fN \errpN_{tt} \dxs \right| \lesssim \|\fN\|^2_{\LtwoLtwo} + \eps \|\errpN_{tt}\|^2_{\LtwoLtwo}.
\end{equation}
Thus, the outcome of testing in Step III for small enough $\eps$ is
\begin{equation}
	\begin{aligned}
		&\| \errpN_{tt}\|^2_{\LtwoLtwo} + \|\errpN_{tt}\|^2_{L^2(L^2(\partial \Omega))} \\
		\lesssim&\, \begin{multlined}[t]
			\eps \|\nabla \errpN_t\|^2_{\LtwoLtwo} +  \|\fN\|^2_{\LtwoLtwo}+\|\errprojNp\|^2_{\Xplowzero \cap H^2(\Hone)}.
		\end{multlined}
	\end{aligned}
\end{equation}
Combining the three obtained estimates leads to \eqref{error est}, provided $\eps$ is sufficiently small. \qed

\bibliography{references}{}
\bibliographystyle{siam} 
\end{document}